\newcommand{\proof}{{\bf Proof:  }}
\newcommand{\remark}{{\bf Remark:   }}
\newcommand{\examples}{{\bf Examples:  }}
\newcommand{\dimv}{\underline{\dim}}
\newcommand{\hb}{\newline\hspace*{\fill}$\Box$}
\newtheorem{theorem}{Theorem}[section]
\newtheorem{lemma}[theorem]{Lemma}
\newtheorem{definition}[theorem]{Definition}
\newtheorem{proposition}[theorem]{Proposition}
\newtheorem{corollary}[theorem]{Corollary}
\newtheorem{examp}[theorem]{Example}\theoremsymbol{\ensuremath{\Box}}\theoremstyle{nonumberplain}\renewtheorem{proof}{Proof}
\DeclareMathOperator{\Natural}{\mathbf{N}}
\DeclareMathOperator{\Complex}{\mathbf{C}}
\DeclareMathOperator{\Hilb}{H}
\renewcommand{\succ}{\mathrm{succ}}
\newcommand{\ohne}{\setminus}
\DeclareMathOperator{\coloneq}{:=}
\begin{document}
\author{{Johannes Engel and Markus Reineke}\\ Fachbereich C - Mathematik\\ Bergische Universit\"at Wuppertal\\ Gaussstr. 20\\ D - 42097 Wuppertal\\ {engel@math.uni-wuppertal.de, reineke@math.uni-wuppertal.de}}
\title{Smooth models of quiver moduli}
\date{}

\maketitle

\begin{abstract}
For any moduli space of stable representations of quivers, certain smooth varieties, compactifying projective space fibrations over the moduli space, are constructed. The boundary of this compactification is analyzed. Explicit formulas for the Betti numbers of the smooth models are derived. In the case of moduli of simple representations, explicit cell decompositions of the smooth models are constructed.
\end{abstract}

\parindent0pt

\section{Introduction}\label{introduction}

Moduli spaces constructed using Geometric Invariant Theory \cite{Mum}, parametrizing appropriate stable objects in an abelian category up to isomorphism, are usually not projective, making the determination of their global topological and geometric invariants a difficult problem. In contrast, the standard compactification, parametrizing equivalence classes of semistable objects, is usually highly singular. Posing a moduli problem resulting in smooth projective moduli spaces is therefore a rather subtle, and in many cases unsolved, problem. The standard approach is to parametrize objects in the given category together with some additional structure, but to choose such structure in an appropriate way is in no way canonical, and depends, if possible at all, on a deep understanding of the particular moduli problem.\\[1ex]
Moduli spaces of representations of quivers \cite{King} form a particularly interesting testing ground for techniques of moduli theory: they are easily defined and parametrize basic linear algebra type objects, they behave quite analogously to moduli of vector bundles on curves or surfaces in many respects, and of course they are interesting in themselves, since they play a key role in approaching wild classification problems in representation theory \cite{RU}.\\[1ex]
The above mentioned general dichotomy between smooth non-projective and singular projective moduli also applies to quiver moduli as soon as the basic discrete invariant of a quiver representation, its dimension vector, is not coprime.\\[2ex]
In this paper, we formulate and study a moduli problem closely related to the original problem of parametrizing stable representations of quivers up to isomorphism. Namely, we consider representations, together with an additional "framing datum", consisting of maps from given vector spaces to the quiver representation. The subtle point in the construction is to choose the correct notion of stability. Once this is found, the moduli spaces of stable such pairs, which are called smooth models here, parametrize semistable quiver representations, together with a map "avoiding the subspaces contradicting stability" (see Proposition \ref{cos} and Lemma \ref{cosplus}).\\[1ex]
The construction is inspired both by the idea of framing quiver data, prominent in H.~Nakajima's work on quiver varieties (see e.g. \cite{Nak}), and by the construction and study of Brauer-Severi type varieties in \cite{BSV} and \cite{VdB}. A similar construction in the context of moduli of vector bundles on curves is provided by the moduli of stable pairs \cite{Th}. Two special cases of the present construction were already studied by the second named author: the case of quivers without oriented cycles, with respect to trivial stability, in \cite{RF}, and the case of the multiple-loop quiver in \cite{RNCH}.\\[2ex]
The smooth models of quiver moduli resulting from our construction, if non-empty, are irreducible smooth varieties of known dimension, projective over the corresponding moduli space of semisimple representations (thus projective in case of quivers without oriented cycles), see Proposition \ref{basicgeomprop}. They admit a natural projective morphism to the moduli space of polystable representations, an analogue of the Hilbert-Chow morphism from Hilbert schemes to symmetric products. No general simple criterion for non-emptyness of the smooth models is known, except for a recursive one; but for "large enough" framing data, the smooth models are always non-empty (Lemma \ref{nonempty}). A notable exception is the case of trivial stability, for which an efficient criterion is given in Theorem \ref{criterionhilb}.\\[1ex] 
The fibres of the analogue of the Hilbert-Chow morphism can be explicitely described as nilpotent parts of smooth models for other quiver data, using a generalization of well-known Luna slice techniques (Theorem \ref{theoremfibres}). In particular, the fibres over the stable locus are always isomorphic to projective spaces of appropriate dimension. Furthermore, the analogue of the Hilbert-Chow morphism is a fibration (that is, locally trivial in the \'etale topology) over each Luna stratum of the moduli space of polystable representations. This allows us to define a stratification of the smooth models, whose generic stratum is given by a projective space fibration over the moduli of stable representations, and whose boundary is decomposed into fibrations with "known" fibres over the other Luna strata (Corollary \ref{strat}).\\[1ex]
Using Harder-Narasimhan techniques and a resolution of the occuring recursions, the Betti numbers in singular cohomology of quiver moduli in the coprime case were computed in \cite{RHNS}. Based on these formulas, two different formulas for the Betti numbers of smooth models are given. One formula (Theorem \ref{summationformula}) is given by an explicit summation, a variant of the main formula from \cite{RHNS}. The other formula (Theorem \ref{genfunbetti}) computes the Poincar\'e polynomials of all smooth models at the same time, by expressing their generating function as a quotient of two generating functions involving explicit rational functions already used in \cite{RHNS}. This result is established using Hall algebra techniques (Lemma \ref{keyidentity}), reminiscent of similar techniques for numbers of rational points of moduli of stable representations \cite{RC} and of quiver Grassmannians \cite{CR}.\\[1ex]
In the special case of trivial stability, the smooth models parametrize arbitrary representations together with a presentation as a factor of a projective representation, and we call them Hilbert schemes of path algebras of quivers. As mentioned above, we are able to give an explicit criterion for non-emptyness in this case. Furthermore, we derive from the general cohomology formulas a positive combinatorial formula for the Betti numbers, that is, a formula given by a weighted counting of certain combinatorial objects, namely, a restricted class of multipartitions.\\[1ex]
We give a conceptual explanation for this formula by constructing an explicit cell decomposition (Theorem \ref{Th:ZU}), generalizing a construction in \cite{RNCH}. The cells in this decomposition are naturally parametrized by certain types of forests (more precisely, certain subquivers of covering quivers), yielding a combinatorial formula for the Betti numbers in terms of such objects. The final result, Theorem \ref{Th:MPbij}, therefore establishes a direct combinatorial relation between the multipartitions and the forests relevant for our combinatorial formulas.\\[2ex]
The paper is organized as follows: in section \ref{recoll}, we first recall basic definitions and facts on quivers, their representations and on quiver moduli (subsection \ref{21}). We also recall some results from \cite{RHNS} which will be used in the following (subsection \ref{23}), thereby generalizing them to quivers possessing oriented cycles using a general purity result (subsection \ref{22}).\\
Section \ref{smoothmodels} is devoted to the construction of the smooth models by a framing process. The objects parametrized by the smooth models are described (Proposition \ref{basicgeomprop}), and basic geometric properties are discussed. The section ends with an illustration of the general construction by several examples, which will be studied in more detail subsequently.\\
In section \ref{fibres}, we first recall the Luna stratification of quiver moduli. This is used to define the stratification of the smooth models, whose geometric properties rely on the analysis of the fibres of the analogue of the Hilbert-Chow morphism in Theorem \ref{theoremfibres}.\\
After recalling some Hall algebra techniques, section \ref{cohomology} derives two formulas for Betti numbers of smooth models mentioned above by direct computations relying on the formulas of subsection \ref{23}, and illustrates the use of the formulas in two examples.\\
Section \ref{hilbertpath} applies the techniques of the previous sections to the case of Hilbert schemes of path algebras, the methods being of a more combinatorial flavour.\\
The cell decompositions of Hilbert schemes of path algebras are constructed explicitely in section \ref{sec:NCHilbert}. Section \ref{sec:MP} compares the relevant combinatorial concepts and gives a typical example.\\[2ex]
{\bf Acknowledgments:} The first author would like to thank the Graduiertenkolleg "Darstellungstheorie und ihre Anwendungen in Mathematik und Physik" at the Bergische Universität Wuppertal for the generous support.
The second author would like to thank R. Bocklandt, W. Crawley-Boevey, A. King, L. Le Bruyn, O. Schiffmann and G. Van de Weyer for helpful comments and interesting discussions on the material developed in this paper.

\section{Recollections on quiver moduli}\label{recoll}

\subsection{Definition of quiver moduli}\label{21}

Let $Q$ be a finite quiver with set of vertices $I$ and set of arrows $Q_1$, where an arrow $\alpha\in Q_1$ starting in $i\in I$ and ending in $j\in I$ will always be denoted by $\alpha:i\rightarrow j$. Denote by ${\bf Z}I$ the free abelian group generated by $I$, whose elements will be written as $d=\sum_{i\in I}d_ii$. On ${\bf Z}I$, we have the Euler form of $Q$ defined by $$\langle d,e\rangle=\sum_{i\in I}d_ie_i-\sum_{\alpha:i\rightarrow j}d_ie_j.$$
The subsemigroup ${\bf N}I$ of ${\bf Z}I$ will be viewed as the set of dimension vectors of representations of $Q$.\\[1ex]
Given a dimension vector $d\in{\bf N}I$, we fix complex vector spaces $M_i$ of dimension $d_i$ for any $i\in I$ and consider the affine space
$$R_d(Q):=\bigoplus_{\alpha:i\rightarrow j}{\rm Hom}(M_i,M_j),$$
on which the reductive algebraic group
$$G_d:=\prod_{i\in I}{\rm GL}(M_i)$$
acts via
$$(g_i)_i\cdot (M_\alpha)_\alpha:=(g_jM_\alpha g_i^{-1})_{\alpha:i\rightarrow j},$$
so that the orbits $\mathcal{O}_M$ correspond naturally to the isomorphism classes $[M]$ of representations of dimension vector $d$.\\[1ex]
Let $\Theta\in({\bf Q}I)^*$ be a linear form, called a stability condition, which will be fixed throughout, and let $\dim d\in{\bf N}$ be defined by $$\dim d:=\sum_{i\in I}d_i.$$
This allows to define the slope of a non-zero dimension vector by
$$\mu(d):=\frac{\Theta(d)}{\dim d}\in{\bf Q}.$$
For $s\in{\bf Q}$, we define $$({\bf N}I)_s:=\{0\not=d\in{\bf N}I\,:\, \mu(d)=s\}\cup\{0\},$$
a subsemigroup of ${\bf N}I$.\\[2ex]
We denote by ${\rm Rep}_{\bf C}(Q)$ the category of finite-dimensional complex representations of $Q$. The dimension vector of such a representation $$X=((X_i)_{i\in I},(X_\alpha:X_i\rightarrow X_j)_{\alpha:i\rightarrow j})$$ will be denoted by $\dimv X\in{\bf N}I$. Its slope $\mu(X)\in{\bf Q}$ is then defined as the slope $\mu(\dimv X)$ of its dimension vector. The representation $X$ is called {($\mu$-)stable} (resp.~($\mu$-)semistable) if $$\mu(U)<\mu(X)\mbox{ (resp. }\mu(U)\leq \mu(X))$$ for all non-zero proper subrepresentations $U$ of $X$. Moreover, the representation $X$ is called ($\mu$-)polystable if it is isomorphic to a direct sum of stable representations, all of which have the same slope.\\[1ex]
In the case $\Theta=0$, any representation is semistable, and the notions of stability, resp.~polystability, reduce to the notions of simplicity, resp.~semisimplicity. We define ${\rm Rep}_{\bf C}^s(Q)$ as the full subcategory of ${\rm Rep}_{\bf C}(Q)$ consisting of semistable representations of slope $s$. By simple properties of semistability (see \cite{RHNS}), this is an abelian subcategory, that is, it is closed under extensions, kernels and cokernels. Its simple objects are the stable representations of slope $s$.\\[1ex]
We denote by $R_d^{\rm sst}(Q)=R_d^{\rm \Theta-sst}(Q)$, resp.~$R_d^{\rm st}(Q)$, the open subset of the variety $R_d(Q)$ corresponding to semistable, resp.~stable, representations.\\[1ex]
By \cite{King}, there exists a smooth complex algebraic variety $M_d^{\rm st}(Q)$ parametrizing isomorphism classes of $\mu$-stable representations of $Q$ of dimension vector $d$, and a complex algebraic variety $M_d^{\rm sst}(Q)$ parametrizing isomorphism classes of $\mu$-polystable representations of $Q$ of dimension vector $d$ (or, equivalently, so-called $S$-equivalence classes of $\mu$-semistable representations of dimension vector $d$). The former is given as the geometric quotient of $R_d^{\rm st}(Q)$ by the action of $G_d$, whereas the latter is given as the algebraic quotient of $R_s^{\rm sst}(Q)$ by the action of $G_d$. The moduli space $M_d^{\rm sst}(Q)$ is therefore defined as the ${\bf Proj}$ of the ring of semi-invariants of $G_d$ in the coordinate ring of $R_d$ with respect to a character of $G_d$ defined via $\Theta$. The moduli space $M_d^{\rm st}(Q)$ is an open (but possibly empty) subset of $M_d^{\rm sst}(Q)$.\\[1ex]
In the special case $\Theta=0$, these moduli will be denoted by $M_d^{\rm simp}(Q)$ and $M_d^{\rm ssimp}(Q)$, thus parametrizing isomorphism classes of simple and semisimple representations, respectively. The variety $M_d^{\rm ssimp}(Q)$ is affine, since it is defined as the ${\bf Spec}$ of the ring of $G_d$-invariants in the coordinate ring of $R_d$. Again $M_d^{\rm simp}(Q)$ is an open, but possibly empty, subset. We denote by $$0:=\bigoplus_{i\in I}S_i^{d_i}\in M_d^{\rm ssimp}(Q)$$ the point corresponding to a canonical semisimple representation of dimension vector $d$, where $S_i$ denotes the one-dimensional representation associated to the vertex $i\in I$. The fibre $q^{-1}(0)$ of the quotient map $$q:R_d(Q)\rightarrow M_d^{\rm ssimp}(Q),$$
attaching to a representation the isomorphism class of its semisimplification, is denoted by $N_d(Q)$ and is called the nullcone in $R_d(Q)$. The points of the nullcone correspond to representations admitting a filtration with subquotients isomorphic to the $S_i$, or equivalently, representations such that the trace along all non-trivial oriented cycles vanishes. We call such representations nilpotent.\\[1ex]
By the above, the variety $M_d^{\rm sst}(Q)$ admits a projective morphism $$p:M_d^{\rm sst}(Q)\rightarrow M_d^{\rm ssimp}(Q),$$
again associating to a representation its semisimplification.\\[1ex]
The situation is summarized in the following diagram:
\begin{center}
\includegraphics{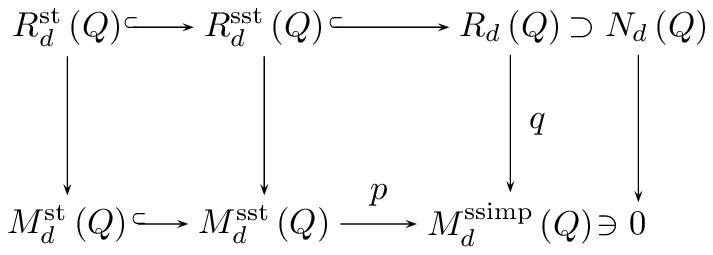}
\end{center}
%$$\begin{array}{ccccl}
%R_d^{\rm st}(Q)&\subset&R_d^{\rm sst}(Q)&\subset&R_d(Q)\supset N_d(Q)\\
%\downarrow&&\downarrow&&\downarrow q\\
%M_d^{\rm st}(Q)&\subset&M_d^{\rm sst}(Q)&\stackrel{p}{\rightarrow}&M_d^{\rm ssimp}(Q)\end{array}$$

In case the quiver $Q$ has no oriented cycles, the moduli space $M_d^{\rm ssimp}(Q)$ reduces to a single point, since all simple representations are one-dimensional, and correspond to the vertices of $Q$ up to isomorphism. Thus, $M_d^{\rm sst}(Q)$ is a projective variety in this case.\\[1ex]
Following \cite{RHNS}, we call $d$ coprime for $\Theta$ if $$\mu(e)\not=\mu(d)\mbox{ for all }0<e<d.$$
In particular, we then have ${\rm gcd}(d_i)_{i\in I}=1$, and this condition is sufficient for coprimality of $d$ at least for generic $\Theta$ by \cite{King}. For $d$ coprime for $\Theta$, we have $M_d^{\rm st}(Q)=M_d^{\rm sst}(Q)$ by definition, thus $M_d^{\rm st}(Q)$ is smooth and projective over $M_d^{\rm ssimp}(Q)$ in this case.\\[1ex]
There are two natural operations on stability conditions which are easily seen to respect the class of stable (resp.~semistable) representations: $\Theta$ can be multiplied by a positive rational number, and $\Theta$ can be translated by a multiple of the functional $\dim$. Using these operations, one can always assume without loss of generality that $$\Theta\in({\bf Z}I)^*\mbox{ and }\Theta(d)=0$$ 
for some given $d\in{\bf N}I$.\\[2ex]
We state the following criterion (see \cite[Corollary 3.5]{RHNS}) for non-emptyness of $M_d^{\rm sst}(Q)$ for future reference.

\begin{theorem}\label{criterion} The moduli space $M_d^{\rm sst}(Q)$ is non-empty if and only if there exists no no-trival decomposition $d=d^1+\ldots+d^s$ such that the following conditions hold:
\begin{description}
\item[a)] $M_{d^k}^{\rm sst}(Q)\not=\emptyset$ for all $k$,
\item[b)] $\mu(d^{1})>\ldots>\mu(d^k)$,
\item[c)] $\langle d^k,d^l\rangle=0$ for all $k<l$.
\end{description}
\end{theorem}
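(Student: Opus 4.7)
The plan is to prove this criterion via the Harder--Narasimhan (HN) stratification of the representation variety $R_d(Q)$, which decomposes $R_d(Q)$ as a disjoint union of locally closed subvarieties $R_d^{d^*}(Q)$ indexed by tuples $d^* = (d^1, \ldots, d^s)$ of non-zero dimension vectors summing to $d$, satisfying $\mu(d^1) > \ldots > \mu(d^s)$ and with $M_{d^k}^{\rm sst}(Q) \neq \emptyset$ for each $k$. A representation lies in $R_d^{d^*}(Q)$ precisely when the dimension vectors of the subquotients of its (unique) HN filtration are given by $d^*$. The trivial HN type $d^* = (d)$ corresponds exactly to $R_d^{\rm sst}(Q)$, so the goal becomes translating non-emptyness of this stratum into a dimension-theoretic statement.

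The next step is the codimension formula $\operatorname{codim}_{R_d(Q)} R_d^{d^*}(Q) = -\sum_{k<l} \langle d^k, d^l \rangle$. To establish this, I would describe $R_d^{d^*}(Q)$ via the incidence variety of HN-flags: fix flags $(V_i^1 \subset \ldots \subset V_i^s = M_i)$ in the underlying vector spaces with prescribed subquotient dimensions $d^k$, together with a quiver representation structure preserving the flag and inducing semistable subquotients. The resulting variety fibers over $\prod_k R_{d^k}^{\rm sst}(Q)$ with affine fibres, and projects onto $R_d^{d^*}(Q)$ with fibres a flag variety $G_d/P_{d^*}$. A direct accounting of the four dimensions---or equivalently, the observation that the normal directions to the stratum are computed by $\bigoplus_{k<l} \operatorname{Ext}^1(M^k, M^l)$, which for semistable reps of strictly decreasing slopes satisfy $\operatorname{Hom}(M^k, M^l) = 0$ and thus $\dim \operatorname{Ext}^1(M^k, M^l) = -\langle d^k, d^l \rangle$---yields the claimed codimension.

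With this in hand, both directions follow from the irreducibility of $R_d(Q)$ (an affine space). If $M_d^{\rm sst}(Q) = \emptyset$, then $R_d^{\rm sst}(Q) = \emptyset$, so the union of non-trivial HN strata covers $R_d(Q)$; by irreducibility, some non-trivial stratum $R_d^{d^*}(Q)$ has codimension zero, forcing $\langle d^k, d^l \rangle = 0$ for all $k<l$ since each $-\langle d^k, d^l \rangle = \dim \operatorname{Ext}^1 \geq 0$. This yields the forbidden decomposition; non-emptyness of $R_d^{d^*}(Q)$ itself is guaranteed by forming direct sums $\bigoplus_k M^k$ of semistable representatives from each $M_{d^k}^{\rm sst}(Q) \neq \emptyset$, whose canonical filtration has the prescribed HN type. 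Conversely, given such a decomposition, the stratum $R_d^{d^*}(Q)$ is non-empty and of codimension zero, hence dense, so $R_d^{\rm sst}(Q)$---being a disjoint stratum---must be empty.

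The main technical obstacle is the codimension computation of Step 2, which requires carefully setting up the HN-flag incidence variety and verifying the Ext-vanishing via semistability of the subquotients; this is the heart of the argument and is where the Euler form enters the picture. The remaining ingredients---existence and uniqueness of HN filtrations for representations in $\mathrm{Rep}_{\mathbf C}(Q)$, and the fact that HN strata are locally closed---are standard consequences of semistability being preserved under the abelian subcategory structure of $\mathrm{Rep}_{\mathbf C}^s(Q)$ noted earlier in the text.
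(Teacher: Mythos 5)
This theorem is not proved in the paper---it is recalled from \cite[Corollary 3.5]{RHNS}---and your argument is essentially the proof given there: the Harder--Narasimhan stratification of the irreducible affine space $R_d(Q)$ into finitely many locally closed strata, the codimension formula $\operatorname{codim}R_d^{d^*}=-\sum_{k<l}\langle d^k,d^l\rangle=\sum_{k<l}\dim\operatorname{Ext}^1(M^k,M^l)\geq 0$, and the observation that emptiness of the open stratum $R_d^{\rm sst}(Q)$ is equivalent to some non-trivial stratum being dense. One small correction to your sketch of the codimension computation: by uniqueness of the HN filtration, the incidence variety of pairs (representation, compatible flag with semistable subquotients of strictly decreasing slopes) maps \emph{bijectively} onto $R_d^{d^*}(Q)$, while it is its projection to the flag variety $G_d/P_{d^*}$ that has the fibres you describe; your alternative normal-space computation via $\bigoplus_{k<l}\operatorname{Ext}^1(M^k,M^l)$, using $\operatorname{Hom}(M^k,M^l)=0$ for $\mu(M^k)>\mu(M^l)$, is the cleaner route and gives the correct answer.
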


\subsection{Purity}\label{22}

Although our primary interest is in quiver moduli over the complex numbers, all the varieties considered so far can be defined over arbitrary fields: by \cite[section 6]{RC}, there exist schemes over ${\bf Z}$ whose base extensions to ${\bf C}$ are isomorphic to $M_d^{\rm sst}(Q)$ and $M_d^{\rm st}(Q)$, respectively. We can thus extend scalars to any field (in particular, to finite fields or their algebraic closures), which, by abuse of notation, will also be denoted by $M_d^{\rm sst}(Q)$ and $M_d^{\rm st}(Q)$, respectively. The same applies to the varieties of representations $R_d(Q)$, $R_d^{\rm sst}(Q)$, etc..~In particular, we can consider the number of rational points over finite fields ${\bf F}_q$ of these varieties, which will be denoted (again by abuse of notation) by $|M_d^{\rm st}(Q)({\bf F}_q)|$,         
etc.\\[2ex]
We need a general remark on purity of quiver moduli. Assume that a datum $(Q,d,\Theta)$ as before is given, and assume that $d$ is coprime for $\Theta$. We will prove that $M_d^{\rm st}(Q)$, viewed over an algebraic closure of a finite field, is not only smooth, but also cohomologically pure, although it is not projective for general $Q$. We adopt the technique of \cite[2.4.]{CBVdB}:

\begin{proposition}\cite[Proposition A.2]{CBVdB} Assume that Z is a smooth and quasi-projective variety over an algebraic closure $k$ of a finite field ${\bf F}_p$, such that $Z$ is defined over a finite extension field ${\bf F}_q$, and that there is
an action $\lambda:{\bf G}_m\times Z\rightarrow Z$ such that, for every $x\in Z$, the limit $\lim_{t\rightarrow 0}\lambda(t, x)$
exists. Assume in addition that $Z^{{\bf G}_m}$ is projective. Then $Z$ is cohomologically pure, that is, the eigenvalues of Frobenius acting on the $i$-th $\ell$-adic cohomology of $Z$ have absolute values $q^{i/2}$ for all $i$.
\end{proposition}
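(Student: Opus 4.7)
The plan is to deduce purity of $Z$ from the Białynicki-Birula decomposition associated to the ${\bf G}_m$-action. First, I would observe that since $Z^{{\bf G}_m}$ is projective and smooth (the latter following from the general fact that the fixed locus of a reductive group action on a smooth variety is smooth), it decomposes into finitely many connected components $F_1,\ldots,F_r$, each of which is smooth and projective. Each $F_j$ is defined over a finite extension of ${\bf F}_q$ (after enlarging $q$), and by Deligne's theorem on the Weil conjectures each $F_j$ is cohomologically pure in the required sense.

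Next, using the hypothesis that the limit $\lim_{t\to 0}\lambda(t,x)$ exists for every $x\in Z$, I would invoke the Białynicki-Birula theorem in the form suitable for quasi-projective varieties: the attracting sets $$Z_j=\{x\in Z\,:\,\lim_{t\to 0}\lambda(t,x)\in F_j\}$$ form a locally closed stratification of $Z$, and the map $Z_j\to F_j$ sending $x$ to its limit realizes $Z_j$ as a Zariski-locally trivial affine-space bundle over $F_j$ of some rank $n_j$ (the dimension of the positive weight subspace of the tangent space to $Z$ at any point of $F_j$). These strata can also be ordered so that each union $\bigcup_{j\leq k}Z_j$ is closed in $Z$; this comes from the existence of limits and gives the stratification its filtration structure.

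The main step is then to propagate purity from the fixed components to $Z$ itself. For this I would work with compactly supported $\ell$-adic cohomology $H^*_c$, and use the long exact sequences attached to the closed/open decompositions $\bigcup_{j<k}Z_j\hookrightarrow\bigcup_{j\leq k}Z_j\hookleftarrow Z_k$. Since each $Z_j\to F_j$ is an affine bundle, $H^*_c(Z_j)\cong H^{*-2n_j}_c(F_j)(-n_j)$, which is pure of the expected weight by purity of $F_j$. By induction on the number of strata, an application of the standard weight argument (the Frobenius weights occurring on either end of a long exact sequence determine those in the middle) shows that $H^*_c(Z)$ is pure. Finally, Poincar\'e duality for the smooth variety $Z$ converts purity of $H^*_c$ into purity of $H^*$, which is the conclusion.

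The main obstacle I anticipate is verifying the Białynicki-Birula decomposition in the quasi-projective (not necessarily projective) setting; the original theorem is stated for smooth projective varieties, but its extension to the smooth quasi-projective case under the hypothesis that all limits $\lim_{t\to 0}\lambda(t,x)$ exist is standard and can be reduced to the projective case by equivariant compactification, or handled directly by a local analysis at fixed points. The rest is a routine weight-filtration argument once the stratification and affine bundle structure are in place.
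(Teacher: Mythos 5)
The paper does not actually prove this proposition --- it is quoted verbatim from the appendix of \cite{CBVdB} and used as a black box, so there is no internal proof to compare against. Your argument (Bia{\l}ynicki-Birula attracting-set stratification over the projective fixed locus, purity of each stratum via Deligne's theorem applied to the smooth projective fixed components together with the affine-bundle structure, propagation of purity through the long exact sequences in compactly supported cohomology by the weight argument, and finally Poincar\'e duality on the smooth $Z$) is correct and is essentially the proof given in the cited appendix, with the one genuinely delicate point --- the existence and filtrability of the BB decomposition in the smooth quasi-projective case --- correctly identified and correctly asserted to be standard.
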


Application to the quiver setup yields the following:

\begin{proposition}\label{purity} If $d$ is coprime for $\Theta$, then $M_d^{\rm st}(Q)$ is pure.
\end{proposition}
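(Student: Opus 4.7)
The plan is to exhibit a ${\bf G}_m$-action on $M_d^{\rm st}(Q)$ satisfying the hypotheses of the cited Crawley-Boevey--Van den Bergh proposition, and then to invoke it directly. The scheme-theoretic models referred to in the beginning of subsection \ref{22} allow us to work over the algebraic closure of a finite field from the outset, so the set-up of that proposition is available.

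First I would introduce the natural scaling action of ${\bf G}_m$ on $R_d(Q)$ given by $t\cdot (M_\alpha)_\alpha=(tM_\alpha)_\alpha$. This action is clearly defined over any prime field, it commutes with $G_d$ since it is central in the larger group $G_d\times{\bf G}_m$, and it preserves the semistable locus $R_d^{\rm sst}(Q)=R_d^{\rm st}(Q)$ (the latter equality using coprimality of $d$). Consequently it descends to an action of ${\bf G}_m$ on the smooth quasi-projective variety $Z:=M_d^{\rm st}(Q)$, which by \cite{King} and coprimality agrees with the geometric quotient $R_d^{\rm st}(Q)/G_d$ and is projective over $M_d^{\rm ssimp}(Q)$ via the map $p$ recalled in subsection \ref{21}.

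Next I would check existence of the limits $\lim_{t\to 0}\lambda(t,x)$ for all $x\in Z$. On $M_d^{\rm ssimp}(Q)$ the induced ${\bf G}_m$-action scales all traces along oriented cycles, so every orbit closure contains the distinguished point $0$ corresponding to the canonical semisimple representation $\bigoplus_{i\in I}S_i^{d_i}$. Equivalently, $p(\lambda(t,x))\to 0$ in $M_d^{\rm ssimp}(Q)$ as $t\to 0$. Since $p\colon Z\to M_d^{\rm ssimp}(Q)$ is projective, hence proper, the valuative criterion applied to the map ${\bf G}_m\to Z$, $t\mapsto\lambda(t,x)$ produces a unique extension to ${\bf A}^1$, giving the desired limit in $Z$ itself.

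It remains to show that the fixed locus $Z^{{\bf G}_m}$ is projective. Any fixed point $x\in Z^{{\bf G}_m}$ has $p(x)\in M_d^{\rm ssimp}(Q)$ fixed by ${\bf G}_m$, but the only ${\bf G}_m$-fixed point of $M_d^{\rm ssimp}(Q)$ under the scaling action is $0$, because any invariant trace along a non-trivial cycle must vanish. Therefore $Z^{{\bf G}_m}$ is contained in the fibre $p^{-1}(0)$, which is projective by projectivity of $p$. As $Z^{{\bf G}_m}$ is closed in $Z$, it is closed in $p^{-1}(0)$ as well, and hence projective. With the three hypotheses verified, the Crawley-Boevey--Van den Bergh proposition yields the purity of $Z=M_d^{\rm st}(Q)$, completing the proof. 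The only subtle point I anticipate is the precise verification that limits exist in $Z$ (rather than only in some compactification), but this is handled cleanly by the properness of $p$ once one observes that the ${\bf G}_m$-flow on the base $M_d^{\rm ssimp}(Q)$ contracts everything to the origin.
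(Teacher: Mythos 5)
Your proof is correct and follows essentially the same route as the paper: the scaling action on $R_d(Q)$ descends to the moduli spaces, the non-negative grading of the invariant ring (generated by traces along cycles) contracts $M_d^{\rm ssimp}(Q)$ to the origin, the fixed locus sits inside the projective fibre $p^{-1}(0)$, and the Crawley-Boevey--Van den Bergh criterion applies. Your explicit appeal to the valuative criterion for the proper map $p$ to lift the limits from the base is a slightly more careful rendering of the step the paper states as "to which all of $M_d^{\rm st}(Q)$ limits", but it is the same argument.
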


\proof The multiplicative group ${\bf G}_m$ acts on $R_d(Q)$ by scaling the linear maps representing the arrows. This restricts to an action on $R_d^{\rm sst}(Q)$, since scaling a representation does not change its subrepresentations. Consequently, ${\bf G}_m$ acts on the moduli spaces $M_d^{\rm (s)st}(Q)$ and $M_d^{\rm (s)simp}(Q)$, and the projective morphism $p:M_d^{\rm sst}(Q)\rightarrow M_d^{\rm ssimp}(Q)$ is ${\bf G}_m$-equivariant. The invariant ring $k[R_d]^{G_d}$ is generated by traces along oriented cycles by \cite{LBP}, thus ${\bf G}_m$ is non-negatively graded by the weight spaces of the ${\bf G}_m$-action. Consequently, the point $0\in M_d^{\rm ssimp}(Q)={\rm Spec}(k[R_d]^{G_d})$ is the unique ${\bf G}_m$-fixed point, to which all points limit. Therefore, the ${\bf G}_m$-fixed points in the smooth quasi-projective variety $M_d^{\rm st}(Q)$ form a closed subvariety of the projective variety $p^{-1}(0)$, to which all of $M_d^{\rm st}(Q)$ limits. Application of the proposition above yields the result.\hb

\subsection{Cohomology}\label{23}

Next, we consider the Betti numbers of quiver moduli. For a complex variety $X$, we denote by
$$P_X(q):=\sum_{i\in{\bf Z}}\dim_{\bf Q}H^i(X,{\bf Q})q^{i/2}\in{\bf Z}[q^{1/2}]$$
its Poincar\'e polynomial in singular cohomology with rational coefficients (the half-powers of $q$ are reasonable since in the present situation, only varieties with vanishing odd cohomology will be proved to appear).

\begin{definition}\label{admissible} Given $(Q,d,\Theta)$ as before, define a rational function
$$P_d(q)=\sum_{d^*}(-1)^{s-1}q^{-\sum_{k<l}\langle d^l,d^k\rangle}\prod_{k=1}^s\frac{|R_{d^k}(Q)({\bf F}_q)|}{|G_{d^k}({\bf F}_q)|}\in{\bf Q}(q),$$
where the sum runs over all decompositions $d=d^1+\ldots+d^s$ into non-zero dimension vectors such that $$\mu(d^1+\ldots+d^k)>\mu(d)\mbox{ for all }k<s.$$
Such decompositions of $d$ will be called $\mu$-admissible.
\end{definition}

\begin{theorem}\label{Betti} Given $(Q,d,\Theta)$ such that $d$ is coprime for $\Theta$, we have
$$P_{M_d^{\rm st}(Q)}(q)=(q-1)\cdot P_d(q).$$
\end{theorem}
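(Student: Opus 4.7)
The strategy is to count rational points of the relevant varieties over finite fields via the Harder--Narasimhan stratification, to invert the resulting recursion to isolate the semistable locus, and then to translate point counts into Betti numbers using the purity result of Proposition \ref{purity}.

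First, I would work over $\mathbb{F}_q$. Every representation $M$ of $Q$ over $\mathbb{F}_q$ of dimension vector $d$ admits a unique Harder--Narasimhan filtration, whose subquotients are semistable of strictly decreasing slopes; this relies only on $\mathrm{Rep}^s(Q)$ being an abelian subcategory, as recalled in Section \ref{21}. Stratifying $R_d(Q)(\mathbb{F}_q)$ by HN type and counting, via the standard argument that giving an HN filtration amounts to choosing the graded pieces (semistable of type $d^k$) plus extension data (of dimension $-\sum_{k<l}\langle d^l,d^k\rangle$), one obtains the identity
\[
\frac{|R_d(Q)(\mathbb{F}_q)|}{|G_d(\mathbb{F}_q)|}=\sum_{(d^k)}q^{-\sum_{k<l}\langle d^l,d^k\rangle}\prod_{k=1}^{s}\frac{|R_{d^k}^{\mathrm{sst}}(Q)(\mathbb{F}_q)|}{|G_{d^k}(\mathbb{F}_q)|},
\]
where $(d^k)=(d^1,\ldots,d^s)$ runs over decompositions of $d$ with $\mu(d^1)>\cdots>\mu(d^s)$.

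Next, I would invert this recursion so as to solve for the semistable term of slope $\mu(d)$. Grouping HN types by the largest index $k$ for which $\mu(d^1+\cdots+d^k)>\mu(d)$, the combinatorial inversion turns the relation above into an alternating sum carrying the sign $(-1)^{s-1}$ and imposing precisely that all the partial sums $d^1+\cdots+d^k$ for $k<s$ have slope strictly greater than $\mu(d)$. Under the coprimality of $d$, no proper subsum can have slope exactly $\mu(d)$, so the boundary cases collapse cleanly; the outcome is exactly the formula
\[
\frac{|R_d^{\mathrm{sst}}(Q)(\mathbb{F}_q)|}{|G_d(\mathbb{F}_q)|}=P_d(q).
\]
Since $d$ is coprime, the endomorphism ring of a stable representation over $\mathbb{F}_q$ is reduced to scalars, so $R_d^{\mathrm{st}}(Q)\to M_d^{\mathrm{st}}(Q)$ is a principal $PG_d$-bundle; combining with $|G_d(\mathbb{F}_q)|/|PG_d(\mathbb{F}_q)|=q-1$ gives $|M_d^{\mathrm{st}}(Q)(\mathbb{F}_q)|=(q-1)P_d(q)$.

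Finally, Proposition \ref{purity} ensures that $M_d^{\mathrm{st}}(Q)$ is cohomologically pure over the algebraic closure of $\mathbb{F}_p$; together with smoothness and the existence of a $\mathbb{Z}$-model, the Weil conjectures then identify the point count $|M_d^{\mathrm{st}}(Q)(\mathbb{F}_q)|$ with the Poincar\'e polynomial $P_{M_d^{\mathrm{st}}(Q)}(q)$ of the complex variety. This yields $P_{M_d^{\mathrm{st}}(Q)}(q)=(q-1)P_d(q)$, as claimed. The main obstacle in executing this plan is the inversion step: one must set up the combinatorics so that the telescoping over partial sums of slopes produces exactly the sign pattern and the strict admissibility condition of Definition \ref{admissible}, and it is here that the coprimality hypothesis must be used to rule out boundary contributions of semistable type.
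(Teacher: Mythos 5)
Your proposal is correct and follows essentially the same route as the paper, which simply cites \cite[Theorem 6.7]{RHNS} for the point-count identity $|M_d^{\rm st}(Q)({\bf F}_q)|=(q-1)\cdot P_d(q)$ (obtained precisely by your Harder--Narasimhan stratification and inversion of the resulting recursion) and then invokes Proposition \ref{purity} to extend the purity argument to quivers with oriented cycles. One minor correction: the inversion yielding $|R_d^{\rm sst}(Q)({\bf F}_q)|/|G_d({\bf F}_q)|=P_d(q)$ is a formal identity valid for arbitrary $d$, so coprimality is not needed there; it is used only to ensure $R_d^{\rm st}(Q)=R_d^{\rm sst}(Q)$, that the stabilizers reduce to scalars, and that purity applies.
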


\proof In the case that $Q$ has no oriented cycles, this is proved in \cite[Theorem 6.7]{RHNS}, where it is derived via purity from the formula $$|M_d^{\rm st}(Q)({\bf F}_q)|=(q-1)\cdot P_d(Q)$$ for the number of rational points of $M_d^{\rm st}(Q)$ over ${\bf F}_q$. Since purity holds for general $Q$ by Proposition \ref{purity}, this proof generalizes.\hb

\remark If $d$ is not coprime for $\Theta$, the rational function $(q-1)\cdot P_d(q)$ is not a polynomial, and neither moduli $M_d^{\rm st}(Q)$ or $M_d^{\rm sst}(Q)$ is pure. Therefore, there is no obvious relation between $(q-1)\cdot P_d(Q)$ and the cohomology of $M_d^{\rm (s)st}(Q)$. Nevertheless, the functions $P_d(Q)$ will enter (see Theorem \ref{genfunbetti}) in the description of the cohomology of smooth models.

\section{Definition of smooth models}\label{smoothmodels}

\begin{definition}\label{defsm} Given a datum $(Q,d,\Theta)$ as before and another dimension vector $0\not=n\in{\bf N}I$, we associate to it a new datum $( \hat{Q}, \hat{d}, \hat{\Theta})$ as follows: 
\begin{itemize}
\item the vertices of $ \hat{Q}$ are those of $Q$, together with one additional vertex $\infty$,
\item the arrows of $ \hat{Q}$ are those of $Q$, together with $n_i$ arrows from $\infty$ to $i$, for every vertex $i$ of $Q$,
\item we define $ \hat{d}_i=d_i$ for all $i\in I$ and $ \hat{d}_\infty=1$,
\item we define $ \hat{\Theta}_i=\Theta_i$ for $i\in I$ and $ \hat{\Theta}_\infty=\mu(d)+\epsilon$ for some sufficiently small positive $\epsilon\in{\bf Q}$.
\end{itemize}
\end{definition}

The slope function associated to $ \hat{\Theta}$ is denoted by $ \hat{\mu}$, and the Euler form with respect to $ \hat{Q}$ is denoted by $$\langle\_,\_\rangle_{ \hat{Q}}:{\bf Z} \hat Q\times{\bf Z} \hat{Q}\rightarrow {\bf Z}.$$
Any dimension vector $e\in{\bf N}I$ can be viewed as a dimension vector for $ \hat{Q}$ via the natural embedding $I\subset \hat{I}$. Furthermore, for any dimension vector $e\in{\bf N}I$, we define a dimension vector $ \hat{e}$ for $ \hat{Q}$ as in the definition. The product $n\cdot e$ is defined by $\sum_{i\in I}n_ie_i$.

\begin{lemma}\label{bphs} The following properties hold for the new datum $( \hat{Q}, \hat{d}, \hat{\Theta})$:
\begin{enumerate}

\item We have $\langle  \hat{e},f\rangle_{ \hat{Q}}=\langle e,f\rangle-n\cdot f$ and $\langle e, \hat{f}\rangle_{ \hat{Q}}=\langle e,f\rangle$ for all $e,f\in{\bf N}I$.
\item For all $0\not=e\leq d$, we have $ \hat{\mu}(e)< \hat{\mu}( \hat{d})$ if and only if $ \hat{\mu}(e)\leq \hat{\mu}( \hat{d})$ if and only if $\mu(e)\leq\mu(d)$.
\item For all $e<d$, we have $ \hat{\mu}( \hat{e})< \hat{\mu}( \hat{d})$ if and only if $ \hat{\mu}( \hat{e})\leq \hat{\mu}( \hat{d})$ if and only if $\mu(e)<\mu(d)$.
\item The dimension vector $ \hat{d}$ is coprime for $ \hat{\Theta}$.
\end{enumerate}
\end{lemma}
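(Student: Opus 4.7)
The plan is to carry out the four parts essentially in sequence, since parts (2)--(4) all rest on part (1) together with the basic computation of $\hat{\mu}(\hat{d})$.

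For part (1), I would just expand the definition of the Euler form on $\hat{Q}$, observing that $\hat{Q}$ differs from $Q$ only by the extra vertex $\infty$ and the $n_i$ arrows $\infty\to i$. In the expression $\langle\hat{e},f\rangle_{\hat{Q}}$, since $f_\infty=0$, the vertex contribution is $\sum_{i\in I}e_if_i$ and the arrow contribution splits into the $Q$-arrows $\alpha:i\to j$ (giving $\sum e_if_j$) plus the new arrows $\infty\to i$ (each contributing $\hat{e}_\infty f_i=f_i$, with multiplicity $n_i$). This yields $\langle e,f\rangle-n\cdot f$. The second identity is simpler: since $e_\infty=0$, the extra vertex and the extra arrows contribute nothing, so $\langle e,\hat{f}\rangle_{\hat{Q}}=\langle e,f\rangle$.

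The key auxiliary computation for parts (2)--(4) is that
\[\hat{\mu}(\hat{d})=\frac{\Theta(d)+\mu(d)+\epsilon}{\dim d+1}=\mu(d)+\frac{\epsilon}{\dim d+1},\]
using $\Theta(d)=\mu(d)\cdot\dim d$. For part (2), where $e\in{\bf N}I$ is viewed as a dimension vector of $\hat{Q}$ (so $e_\infty=0$), one has $\hat{\mu}(e)=\mu(e)$; both inequalities $\hat{\mu}(e)<\hat{\mu}(\hat{d})$ and $\hat{\mu}(e)\leq\hat{\mu}(\hat{d})$ then reduce, by choosing $\epsilon$ sufficiently small (recall that $\mu(e)$ takes values in a finite set as $e\leq d$), to $\mu(e)\leq\mu(d)$.

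For part (3), I would unwind $\hat{\mu}(\hat{e})\leq\hat{\mu}(\hat{d})$ into the inequality
\[(\dim d+1)(\Theta(e)-\mu(d)\dim e)\leq-\epsilon(\dim d-\dim e).\]
Since $e<d$ forces $\dim d-\dim e>0$, the right-hand side is strictly negative, and $\Theta(e)-\mu(d)\dim e$ takes finitely many values as $e$ varies over vectors $\leq d$. Hence for $\epsilon$ small enough, the inequality is equivalent to the strict one and to $\Theta(e)<\mu(d)\dim e$, i.e.~$\mu(e)<\mu(d)$. Part (4) then follows by combining parts (2) and (3): any $0<f<\hat{d}$ has $f_\infty\in\{0,1\}$, and in both cases one deduces $\hat{\mu}(f)\ne\hat{\mu}(\hat{d})$, either because $\mu(f)\leq\mu(d)$ forces strict inequality via part (2), or because $\mu(f)>\mu(d)$ (so $\hat{\mu}(f)=\mu(f)>\hat{\mu}(\hat{d})$ for small $\epsilon$), or, in the $f_\infty=1$ case, because part (3) says the inequality is always strict whenever it holds.

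The only genuine subtlety is the universal choice of $\epsilon$: it must be small enough simultaneously for all the finitely many values of $\mu(e)$ with $0<e\leq d$ that appear above. Since these values form a finite subset of ${\bf Q}$, such an $\epsilon$ exists, and this is really the content of the phrase ``sufficiently small $\epsilon$'' in Definition~\ref{defsm}. I do not expect any real obstacle here; the lemma is a bookkeeping statement about how the Euler form and slope function change under the framing construction.
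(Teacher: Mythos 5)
Your proof is correct and follows essentially the same route as the paper: a direct expansion of the Euler form for part (1), a reduction of the slope inequalities for $\hat{\mu}$ to inequalities in $\epsilon$ for parts (2) and (3), and part (4) as an immediate consequence. The only difference is in how ``sufficiently small $\epsilon$'' is handled: the paper first normalizes $\Theta(d)=0$ and $\Theta\in({\bf Z}I)^*$, so that $\Theta(e)$ is an integer and the bounds land in $]0,1[$ resp.\ $]-1,0[$, making $\epsilon\leq 1$ a concrete universal choice, whereas you appeal to the finiteness of the set of values $\mu(e)$ for $e\leq d$ --- both are valid, the paper's version just makes the choice of $\epsilon$ explicit.
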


\proof The first part follows from a direct computation using the definition of $ \hat{Q}$. By the operations on stabilities mentioned above, we can assume without loss of generality that $\Theta(d)=0$ and $\Theta\in{\bf Z}I^*$. We assume $\varepsilon$ to be sufficiently small in the above definition, thus we can assume $\varepsilon\leq 1$. For the second part, assume $0\not=e\leq d$. Then
$$ \hat{\mu}(e)< \hat{\mu}( \hat{d}) \iff\frac{\Theta(e)}{\dim e}<\frac{\varepsilon}{\dim d+1} \iff \Theta(e)<\varepsilon\cdot\frac{\dim e}{\dim d+1}\in]0,1[$$
(and similarly for $\leq$ instead of $<$), thus the statement follows since $\Theta(e)\leq 0$ if and only if $\mu(e)\leq\mu(d)$. The third part follows analogously: assume $e<d$. Then
$$ \hat{\mu}( \hat{e})< \hat{\mu}( \hat{d})\iff \frac{\Theta(e)+\varepsilon}{\dim e+1}<\frac{\varepsilon}{\dim d+1}\iff \Theta(e)<\epsilon\cdot\frac{\dim e-\dim d}{\dim d+1}\in]-1,0[$$
(and similarly for $\leq$ instead of $<$), thus the statement follows as before. Now the final statement follows.\hb

As in \cite{RF}, we can view representations of $ \hat{Q}$ of dimension vector $ \hat{d}$ as representations of $Q$, together with a framing datum, as follows:\\[1ex]
We fix vector spaces $V_i$ of dimension $n_i$ for all $i\in I$, and consider $V=\oplus_{i\in I}V_i$. It is easy to see that the representations of $ \hat{Q}$ of dimension vector $ \hat{d}$ can be identified with pairs $(M,f)$ consisting of a representation $M$ of $Q$ of dimension vector $d$ and a tuple $f=(f_i:V_i\rightarrow M_i)_{i\in I}$ of linear maps.

\begin{proposition}\label{cos} For a representation $(M,f)$ of $ \hat{Q}$ of dimension vector $ \hat{d}$, the following are equivalent:
\begin{description}
\item[a)] $(M,f)$ is $ \hat{\mu}$-semistable
\item[b)] $(M,f)$ is $ \hat{\mu}$-stable
\item[c)] $M$ is a $\mu$-semistable representation of $Q$, and $\mu(U)<\mu(M)$ for all proper subrepresentations $U$ of $M$ containing the image of $f$.
\end{description}
\end{proposition}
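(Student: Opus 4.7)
The plan is to analyze the subrepresentations of $(M,f)$ viewed as a $\hat{Q}$-representation and to apply the slope comparisons provided by Lemma \ref{bphs} to each type of subrepresentation, showing that the resulting stability/semistability conditions coincide and reduce precisely to condition (c).

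First I would describe the lattice of subrepresentations of $(M,f)$. A subrepresentation $N$ of $(M,f)$ of dimension vector $\hat{e}$ (meaning some $e \in \mathbf{N}I$ with possibly $e_\infty \in \{0,1\}$ since $\hat{d}_\infty = 1$) falls into two classes. If the $\infty$-component of $N$ is zero, then $N$ corresponds to an arbitrary subrepresentation $U$ of $M$ with $\dimv U = e \leq d$ (viewed inside $\hat{Q}$ with zero at $\infty$). If the $\infty$-component is one-dimensional, then the one-dimensional space at $\infty$ must be mapped into the $Q$-part of $N$ by all framing maps, forcing $N$ to correspond to a subrepresentation $U$ of $M$ that contains the image of $f$, with $\hat{Q}$-dimension vector $\hat{e}$ where $e = \dimv U$.

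Next I would apply Lemma \ref{bphs} to each class. For the first class, properness of $N$ forces only $N \neq 0$, so we must consider all $0 \neq e \leq d$. By part 2 of Lemma \ref{bphs}, both the strict and non-strict slope inequalities $\hat{\mu}(e) \lessgtr \hat{\mu}(\hat{d})$ are equivalent to $\mu(e) \leq \mu(d)$, i.e.\ to $\mu(U) \leq \mu(M)$. Running over all such $U$ thus encodes exactly $\mu$-semistability of $M$ (the case $e = d$ being automatic); and the $<$ and $\leq$ conditions at this level coincide. For the second class, properness forces $e < d$, and part 3 of Lemma \ref{bphs} shows that both $\hat{\mu}(\hat{e}) < \hat{\mu}(\hat{d})$ and $\hat{\mu}(\hat{e}) \leq \hat{\mu}(\hat{d})$ are equivalent to $\mu(e) < \mu(d)$, i.e.\ to $\mu(U) < \mu(M)$ for every proper subrepresentation $U$ of $M$ containing the image of $f$.

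Finally I would combine the two cases. Taking the conjunction of the conditions across both classes of subrepresentations shows that $(M,f)$ is $\hat{\mu}$-semistable if and only if it is $\hat{\mu}$-stable (the two agree type-by-type), and both are equivalent precisely to condition (c). This establishes the chain of implications (a) $\Leftrightarrow$ (b) $\Leftrightarrow$ (c). There is no substantial obstacle here; the argument is essentially an unwinding of definitions once the subrepresentation lattice of $(M,f)$ is explicitly described, with Lemma \ref{bphs} doing all the arithmetic work. The only point requiring a little care is checking that the ``boundary'' case $e = d$ in the first class is vacuous (it gives $\mu(d) \leq \mu(d)$, always true), so that semistability of $M$ is indeed captured by the remaining inequalities.
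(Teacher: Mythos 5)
Your proof is correct and follows essentially the same route as the paper's: identify the two types of subrepresentations of $(M,f)$ (those with zero $\infty$-component, giving arbitrary subrepresentations of $M$, and those with $\infty$-component of dimension one, giving subrepresentations containing $\mathrm{Im}(f)$), and then translate the slope inequalities via parts 2 and 3 of Lemma \ref{bphs}. Your additional remark that the case $e=d$ in the first class is vacuous is a worthwhile explicit check that the paper leaves implicit.
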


\proof  We consider the dimension vectors of non-trivial subrepresentations of the representation $(M,f)$.\\[1ex]
The subrepresentations of $(M,f)$ of dimension vector $e$ clearly correspond to subrepresentations of $M$ of dimension vector $e$, whereas the subrepresentations of $(M,f)$ of dimension vector $ \hat{e}$ correspond to subrepresentations $U$ of $M$ such that ${\rm Im}(f)\subset U$; namely, they are of the form $(U,f)$.\\[1ex]
Now assume that $(M,f)$ is $ \hat{\mu}$-semistable (resp. $ \hat{\mu}$-stable), and let $U$ be a non-trivial subrepresentation of $M$. 
Viewing $U$ as a subrepresentation of $(M,f)$, Lemma \ref{bphs} yields $\mu(U)\leq\mu(M)$. Thus, $M$ is $\mu$-semistable. Now assume that ${\rm Im}(f)\subset U$. Then Lemma \ref{bphs} yields $\mu(U)<\mu(M)$, as claimed.\\[1ex]
Conversely, assume that $M$ fulfills the claimed conditions. The above analysis of the possible subrepresentations of $(M,f)$, together with Lemma \ref{bphs}, immediately yields $ \hat{\mu}$-(semi-)stability of $(M,f)$.\hb

By Lemma \ref{bphs}, the dimension vector $ \hat{d}$ is coprime for $ \hat{\Theta}$. We thus have an equality of moduli $M_{ \hat{d}}^{\rm st}( \hat{Q})=M_{ \hat{d}}^{\rm sst}( \hat{Q})$. We also denote $$R_{d,n}^\Theta(Q)=R_{ \hat{d}}^{\rm  \hat{\Theta}-st}( \hat{Q}).$$ 

\begin{definition} We denote $M_{ \hat{d}}^{\rm st}( \hat{Q})$ by $M_{d,n}^{\Theta}(Q)$ and call this variety a smooth model for $M_d^{\rm sst}(Q)$.
\end{definition}

From the definition of $ \hat{Q}$ and the general properties of quiver moduli, we now immediately get:

\begin{lemma}\label{cosplus} The smooth model $M_{d,n}^\Theta(Q)$ parametrizes equivalence classes of pairs $(M,f)$ as in Proposition \ref{cos}, under the equivalence relation identifying $(M,f)$ and $(M',f')$ if and only if there exists an isomorphism $\varphi:M\rightarrow M'$ such that $f'=\varphi f$.
\end{lemma}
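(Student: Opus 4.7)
The plan is to unwind the definition $M_{d,n}^\Theta(Q) = M_{\hat d}^{\rm st}(\hat Q)$ and translate each piece of the moduli description into the language of pairs $(M,f)$. Three ingredients need to be combined: King's GIT construction, Proposition \ref{cos}, and the translation of framed isomorphisms into unframed ones.

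First, I would invoke coprimality. By Lemma \ref{bphs}(4) the dimension vector $\hat d$ is coprime for $\hat\Theta$, so $M_{\hat d}^{\rm sst}(\hat Q) = M_{\hat d}^{\rm st}(\hat Q)$ is a geometric quotient of $R_{\hat d}^{\hat\Theta\text{-st}}(\hat Q)$ by $G_{\hat d}$. Its closed points therefore biject with isomorphism classes of $\hat\mu$-stable representations of $\hat Q$ of dimension vector $\hat d$. Second, using the identification of representations of $\hat Q$ of dimension vector $\hat d$ with pairs $(M,f)$ (as set up just before Proposition \ref{cos}), I would apply Proposition \ref{cos} to identify the $\hat\mu$-stable such representations precisely with those pairs where $M$ is $\mu$-semistable and $\mu(U)<\mu(M)$ for every proper subrepresentation $U\supseteq\operatorname{Im}(f)$, i.e.\ exactly the pairs described in the statement.

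The only remaining step, which is where the care is needed, is to translate isomorphism of $\hat Q$-representations into the stated equivalence relation on pairs. An isomorphism $\hat\varphi\colon(M,f)\to(M',f')$ consists of linear isomorphisms $\varphi_i\colon M_i\to M'_i$ for $i\in I$ together with a nonzero scalar $\lambda=\varphi_\infty\in\mathbb{C}^*$ at the framing vertex (since $\hat d_\infty=1$). Compatibility along the original arrows of $Q$ is exactly the condition that $\varphi=(\varphi_i)_{i\in I}$ is an isomorphism of $Q$-representations $M\to M'$. Compatibility along the $n_i$ arrows $\infty\to i$ is the condition $\varphi_i\circ f_i = \lambda\cdot f'_i$ for each $i$, that is, $f' = \lambda^{-1}\varphi\circ f$. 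Replacing $\varphi$ by the isomorphism $\lambda^{-1}\varphi\colon M\to M'$ absorbs the scalar, so the relation becomes $f'=\varphi\circ f$ as stated.

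There is no substantive obstacle here beyond bookkeeping; the only slightly subtle point is noticing that the $\mathbb{G}_m$-factor at the framing vertex $\infty$ can be absorbed into the $G_d$-action on $M$, so that the a priori coarser equivalence coming from the full group $G_{\hat d}=G_d\times\mathbb{G}_m$ collapses to the equivalence described in the lemma. Combining the three steps yields the claimed moduli interpretation of $M_{d,n}^\Theta(Q)$.
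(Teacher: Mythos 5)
Your proposal is correct and follows exactly the route the paper intends (the paper states the lemma as an immediate consequence of coprimality of $\hat d$, the geometric-quotient description of $M_{\hat d}^{\rm st}(\hat Q)$, and Proposition \ref{cos}, without writing out the details). Your explicit verification that the extra $\mathbf{G}_m$-factor $\varphi_\infty=\lambda$ at the framing vertex can be absorbed into $\varphi$, so that isomorphism of $\hat Q$-representations collapses to the stated relation $f'=\varphi f$, is the one point the paper leaves implicit, and you handle it correctly.
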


\remark For each vertex $i\in I$, we consider the projective representation $P_i$ of $Q$, which has the paths from $i$ to $j$ as a basis of the space $(P_i)_j$. We have $${\rm Hom}_Q(P_i,M)\simeq M_i$$ for all representations $M$. Therefore, defining $$P^{(n)}=\bigoplus_{i\in I}P_i^{n_i},$$
maps of $Q$-representations from $P^{(n)}$ to $M$ can be naturally identified with maps $f:V\rightarrow M$ as before. Thus, we can also interpret the points of $M_{d,n}^\Theta(Q)$ as equivalence classes of morphisms $f:P^{(n)}\rightarrow M$ such that $M$ is $\mu$-semistable and $\mu(U)<\mu(M)$ for any proper subrepresentations $U$ of $M$ containing the image of $f$, under the equivalence relation identifying $(f:P^{(n)}\rightarrow M)$ and $(f':P^{(n)}\rightarrow M')$ if and only if there exists an isomophism $\varphi:M\rightarrow M'$ such that $f'=\varphi f$.

\begin{proposition}\label{basicgeomprop} If $M_{d,n}^\Theta(Q)$ is non-empty, it is a smooth and pure variety of dimension $n\cdot d-\langle d,d\rangle$, admitting a projective morphism (the Hilbert-Chow morphism) $\pi:M_{d,n}^\Theta(Q)\rightarrow M_d^{\rm sst}(Q)$.
\end{proposition}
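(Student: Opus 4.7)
The plan is to handle smoothness, purity, and the dimension formula together using the coprimality provided by Lemma~\ref{bphs}, and then to construct $\pi$ and verify its projectivity in a second step.

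Since $\hat d$ is coprime for $\hat\Theta$ by Lemma~\ref{bphs}(4), we have $M_{d,n}^\Theta(Q)=M_{\hat d}^{\rm st}(\hat Q)=M_{\hat d}^{\rm sst}(\hat Q)$, which by the construction recalled in Section~\ref{21} is smooth, and pure by Proposition~\ref{purity}. Its dimension, whenever non-empty, equals $1-\langle \hat d,\hat d\rangle_{\hat Q}$ by the standard computation $\dim R_{\hat d}-\dim G_{\hat d}+1$. A direct computation from the definition of $\hat Q$ (or two applications of Lemma~\ref{bphs}(1)) gives
\[
\langle \hat d,\hat d\rangle_{\hat Q}=1+\langle d,d\rangle-n\cdot d,
\]
so the dimension equals $n\cdot d-\langle d,d\rangle$, as claimed.

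For the Hilbert-Chow morphism $\pi$, I would use the forgetful map $(M,f)\mapsto M$ at the level of representation varieties. The natural projection $R_{\hat d}(\hat Q)=R_d(Q)\oplus\bigoplus_{i\in I}\mathrm{Hom}(V_i,M_i)\to R_d(Q)$ is $G_{\hat d}$-equivariant (with the $\mathbb{G}_m$-factor of $G_{\hat d}=G_d\times\mathbb{G}_m$ acting trivially on the target), and by Proposition~\ref{cos} it restricts to a map $R_{d,n}^\Theta(Q)\to R_d^{\rm sst}(Q)$. Composing with the quotient $R_d^{\rm sst}(Q)\to M_d^{\rm sst}(Q)$ yields a $G_{\hat d}$-invariant morphism, which descends by the universal property of the GIT quotient to the desired morphism $\pi:M_{d,n}^\Theta(Q)\to M_d^{\rm sst}(Q)$.

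The remaining, and most delicate, step is projectivity of $\pi$. The key observation is that $\hat Q$ admits no oriented cycle through $\infty$, since no arrow terminates at $\infty$. Hence the invariant ring $k[R_{\hat d}]^{G_{\hat d}}$, being generated by traces along oriented cycles by \cite{LBP}, coincides with $k[R_d]^{G_d}$, so that $M_{\hat d}^{\rm ssimp}(\hat Q)\cong M_d^{\rm ssimp}(Q)$. The canonical projective morphism attached to $\hat Q$ recalled in Section~\ref{21} therefore takes the form $p_{\hat Q}:M_{d,n}^\Theta(Q)\to M_d^{\rm ssimp}(Q)$. Chasing the two quotient maps at the level of semistable loci shows that $p_{\hat Q}=p\circ\pi$, where $p:M_d^{\rm sst}(Q)\to M_d^{\rm ssimp}(Q)$ is the analogous (projective, hence separated) morphism for $Q$. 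Since $p\circ\pi$ is proper and $p$ is separated, the standard fact that properness of the composite together with separatedness of the target map implies properness of the first factor yields that $\pi$ is proper; combined with quasi-projectivity of source and target this gives projectivity. The main subtlety, requiring careful checking, is precisely the identification of invariants giving $M_{\hat d}^{\rm ssimp}(\hat Q)=M_d^{\rm ssimp}(Q)$ and the compatibility $p_{\hat Q}=p\circ\pi$ of the two semisimplification maps.
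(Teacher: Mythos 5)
Your proposal is correct and follows essentially the same route as the paper: purity and smoothness from coprimality of $\hat d$ via Proposition \ref{purity}, descent of the forgetful map using Proposition \ref{cos}, and projectivity of $\pi$ deduced from the identification $M_{\hat d}^{\rm ssimp}(\hat Q)\simeq M_d^{\rm ssimp}(Q)$ (oriented cycles of $Q$ and $\hat Q$ coincide) together with the factorization $\hat p=p\circ\pi$. You additionally spell out the dimension computation $1-\langle\hat d,\hat d\rangle_{\hat Q}=n\cdot d-\langle d,d\rangle$ and the properness cancellation step, both of which the paper leaves implicit; these are correct.
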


\proof Smoothness and purity follow from Proposition \ref{purity}, using the fact that $ \hat{d}$ is coprime for $ \hat{\Theta}$ by Lemma \ref{bphs}. The map $$R_{d,n}^\Theta(Q)\rightarrow R_d(Q),\;\;\;(M,f)\mapsto M,$$
thus forgetting the framing datum $f:V\rightarrow M$, has image contained in $R_d^{\rm sst}(Q)$ by Proposition \ref{cos}. Thus, it descends to a morphism $$\pi:M_{d,n}^\Theta(Q)\rightarrow M_d^{\rm sst}(Q).$$
On the other hand, we have projective morphisms $$p:M_d^{\rm sst}(Q)\rightarrow M_d^{\rm ssimp}(Q)\mbox{ and } \hat{p}:M_{d,n}^\Theta(Q)\rightarrow M_{ \hat{d}}^{\rm ssimp}( \hat{Q}).$$
The moduli spaces of semisimple representations are spectra of invariant rings, which are generated by traces along oriented cycles in the quivers. But the oriented cycles of $Q$ and $ \hat{Q}$ coincide by definition of $ \hat{Q}$, and thus $$M_{ \hat{d}}^{\rm ssimp}( \hat{Q})\simeq M_d^{\rm ssimp}(Q).$$
Consequently, the composite $ \hat{p}$ of the morphisms
$$M_{d,n}^\Theta(Q)\stackrel{\pi}{\rightarrow} M_d^{\rm sst}(Q)\stackrel{p}{\rightarrow} M_d^{\rm ssimp}(Q)\simeq M_{ \hat{d}}^{\rm ssimp}( \hat{Q})$$
being projective, the morphism $\pi:M_{d,n}^\Theta(Q)\rightarrow M_d^{\rm sst}(Q)$ is projective, too. This proof is summarized by the diagram
\begin{center}
\includegraphics{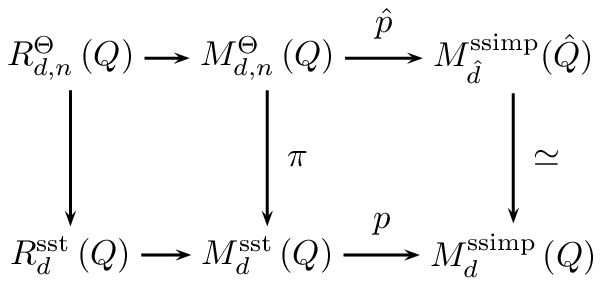}
\end{center}
%$$\begin{array}{cccccc}
%R_{d,n}^\Theta(Q)&\rightarrow&M_{d,n}^\Theta(Q)&\stackrel{ \hat{p}}{\rightarrow}&M_{ \hat{d}}^{\rm ssimp}( \hat{Q})\\
%\downarrow&&\downarrow\pi&&\downarrow\simeq\\
%R_d^{\rm sst}(Q)&\rightarrow&M_d^{\rm sst}(Q)&\stackrel{p}{\rightarrow}&M_d^{\rm ssimp}(Q)
%\end{array}$$\hb

\remark No general effective exact criterion for non-emptyness of $M_{d,n}(Q)$ is available at the moment, except for the general recursive criterion Theorem \ref{criterion}. We will derive such a criterion later in case $\Theta=0$ (see Theorem \ref{criterionhilb}).

\begin{lemma}\label{nonempty} For large enough $n$ (in fact, for $n\geq d$), the smooth model $M_{d,n}^\Theta(Q)$ is non-empty if and only if $M_d^{\rm sst}(Q)$ is.
\end{lemma}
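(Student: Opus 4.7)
The nontrivial direction is to show that non-emptyness of $M_d^{\rm sst}(Q)$ implies non-emptyness of $M_{d,n}^\Theta(Q)$ when $n\geq d$ (meaning $n_i\geq d_i$ for every $i\in I$); the converse is immediate, since the forgetful map $(M,f)\mapsto M$ sends points of $M_{d,n}^\Theta(Q)$ to $\mu$-semistable representations of dimension vector $d$ by Proposition~\ref{cos}.

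For the main direction, I will exhibit an explicit point of $R_{d,n}^\Theta(Q)$. Start with a $\mu$-semistable representation $M$ of dimension vector $d$, whose existence is guaranteed by assumption. Since $n_i\geq d_i=\dim M_i$, I can choose, for each vertex $i\in I$, a surjective linear map $f_i\colon V_i\twoheadrightarrow M_i$. The resulting tuple $f=(f_i)_i$ is a framing datum, giving a pair $(M,f)$, which I claim satisfies condition (c) of Proposition~\ref{cos}.

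Indeed, let $U\subset M$ be a subrepresentation containing the image of $f$ in the sense of that proposition, that is, $U$ is a subrepresentation with $\operatorname{Im}(f_i)\subset U_i$ for all $i$. Since each $f_i$ is surjective, we have $U_i=M_i$ for every vertex $i$, hence $U=M$. Thus no \emph{proper} subrepresentation of $M$ contains the image of $f$, so the defining inequality $\mu(U)<\mu(M)$ for proper subrepresentations $U\supset\operatorname{Im}(f)$ is vacuously satisfied. Combined with the $\mu$-semistability of $M$, Proposition~\ref{cos} shows that $(M,f)$ is $\hat\mu$-stable, so it defines a point of $M_{d,n}^\Theta(Q)$.

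There is essentially no obstacle here; the only subtle point is to parse correctly what "subrepresentation containing the image of $f$" means in condition (c) of Proposition~\ref{cos} — namely a subrepresentation $U\subset M$ with $\operatorname{Im}(f_i)\subset U_i$ for all $i$, not merely a subrepresentation generated by the images. Once this is observed, surjectivity of each $f_i$, which is possible precisely because $n\geq d$, forces any such $U$ to equal $M$, and the argument concludes.
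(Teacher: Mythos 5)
Your proof is correct and follows exactly the paper's own argument: the converse direction is immediate from Proposition \ref{cos}, and for sufficiency one chooses each $f_i$ surjective (possible since $n_i\geq d_i$), so that no proper subrepresentation contains $\operatorname{Im}(f)$ and condition (c) holds vacuously. You have merely spelled out the details the paper leaves implicit.
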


\proof By Proposition \ref{cos}, non-emptyness of $M_d^{\rm sst}(Q)$ is clearly necessary for non-emptyness of $M_{d,n}^\Theta(Q)$. Sufficiency follows again from Proposition \ref{cos}, since in case $n_i\geq d_i$, the maps $f_i:V_i\rightarrow M_i$ can be chosen to be surjective.\hb

In case $d$ is coprime for $\Theta$, the moduli space $M_d^{\rm st}(Q)$ carries tautological bundles $\mathcal{M}_i$ for $i\in I$, providing a universal representation $\mathcal{M}$ of $Q$ in the category of vector bundles on $M_d^{\rm st}(Q)$, in the sense that the representation $${\cal M}_M=((\mathcal{M}_i)_M)_{i\in I}$$ induced on the fibres of the various bundles ${\cal M}_i$ over the point $M$ is isomorphic to $M$, for all stable representations $M\in M_d^{\rm st}(Q)$.

\begin{proposition} If $d$ is coprime for $\Theta$, the smooth model $M_{d,n}^\Theta(Q)$ is isomorphic to the projective bundle ${\bf P}(\oplus_{i\in I}\mathcal{M}_i^{n_i})$ over $M_d^{\rm st}(Q)$.
\end{proposition}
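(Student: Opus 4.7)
The plan is to realize both varieties as GIT quotients of the same space and then compute the smooth-model quotient in two stages. Identify $R_{\hat{d}}(\hat{Q}) \cong R_d(Q)\times H$ with $H := \bigoplus_{i\in I}\mathrm{Hom}(V_i,M_i)$ and $G_{\hat{d}} = G_d\times \mathbf{C}^*_{\infty}$, where $G_d$ acts diagonally and the factor $\mathbf{C}^*_{\infty}$ at the new vertex scales $H$. Proposition~\ref{cos}, combined with coprimality of $d$, identifies the $\hat{\mu}$-stable locus with $R_d^{\mathrm{st}}(Q)\times (H\setminus\{0\})$: for $M$ stable, every proper non-zero subrepresentation already has strictly smaller slope, and the only genuine stability constraint imposed by the new vertex is the non-vanishing of $f$.

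With this product structure in hand, I would compute the GIT quotient in stages. First, mod out by $\mathbf{C}^*_{\infty}$, which acts freely on $H\setminus\{0\}$ by scaling, producing the trivial projective bundle $R_d^{\mathrm{st}}(Q)\times \mathbf{P}(H)$. Next, quotient by $G_d$: its central scalar $\mathbf{C}^*\subset G_d$ acts trivially on $R_d^{\mathrm{st}}(Q)$ and trivially on $\mathbf{P}(H)$ (it scales $H$ uniformly), so the residual action factors through $PG_d := G_d/\mathbf{C}^*$, which acts freely on $R_d^{\mathrm{st}}(Q)$ by coprimality, making $R_d^{\mathrm{st}}(Q)\to M_d^{\mathrm{st}}(Q)$ a principal $PG_d$-bundle. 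The resulting quotient is then the associated bundle $R_d^{\mathrm{st}}(Q)\times_{G_d}\mathbf{P}(H)$, and the standard identification of associated projective bundles rewrites this as $\mathbf{P}\bigl(R_d^{\mathrm{st}}(Q)\times_{G_d} H\bigr) = \mathbf{P}\bigl(\bigoplus_{i\in I}\mathcal{M}_i^{n_i}\bigr)$, since by definition $\mathcal{M}_i = R_d^{\mathrm{st}}(Q)\times_{G_d}M_i$.

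The main point to check is that the one-step GIT quotient defining $M_{d,n}^{\Theta}(Q)$ agrees with this iterated quotient. This is a standard reduction, since the stability condition decomposes compatibly with the product of groups: $\mathbf{C}^*_{\infty}$-stability on $H$ amounts to $f\neq 0$, the $G_d$-stability on $R_d(Q)$ is the classical $\mu$-stability, and their intersection is exactly the $\hat{\mu}$-stable locus already identified via Proposition~\ref{cos}. Once this compatibility is established, the rest of the argument is formal bookkeeping with associated bundle constructions, and I do not anticipate any genuine obstacle beyond this verification. A sanity check at the level of points is provided by Lemma~\ref{cosplus}: since stable representations have automorphism group $\mathbf{C}^*$, equivalence classes of pairs $(M,f)$ with $M$ stable and $f\neq 0$ are exactly pairs $(M,[f])$, matching the closed points of $\mathbf{P}(\bigoplus_i \mathcal{M}_i^{n_i})$ fibrewise over $M_d^{\mathrm{st}}(Q)$.
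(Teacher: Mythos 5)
Your argument is essentially the paper's: both proofs come down to identifying $R_{d,n}^{\Theta}(Q)$ with $R_d^{\mathrm{st}}(Q)\times\bigl(\bigoplus_{i}M_i^{n_i}\setminus 0\bigr)$ via Proposition \ref{cos} and coprimality, and then recognizing the quotient by $G_{\hat{d}}$ as the projective bundle; your two-stage quotient is just a slightly more explicit bookkeeping of the same computation. One point needs correcting, though: it is \emph{not} true that ``by definition $\mathcal{M}_i=R_d^{\mathrm{st}}(Q)\times_{G_d}M_i$'' with the naive diagonal action. The stabilizer of a stable point is the group of scalars, and these act on the fibre $M_i$ by scaling, so the naive associated-bundle construction does not descend to a vector bundle (the would-be fibres are $M_i/{\bf C}^*$). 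King's construction, which the paper recalls, twists the action on the fibre by the character $\prod_j\det(g_j)^{a_j}$ with $\sum_j a_jd_j=1$ --- and this is exactly where indivisibility of $d$ enters the existence of $\mathcal{M}_i$ (cf.\ the remark following the proposition that universal bundles are not expected to exist in the non-coprime case). Your conclusion survives because projectivization is insensitive to twisting by a character/line bundle: $R_d^{\mathrm{st}}(Q)\times_{G_d}{\bf P}(H)$ computed with the untwisted action coincides with ${\bf P}(\bigoplus_i\mathcal{M}_i^{n_i})$ for the correctly defined $\mathcal{M}_i$. So the proof is right once that identification is justified via the twisted construction rather than asserted as a definition. (A smaller quibble: $PG_d$ acts freely on $R_d^{\mathrm{st}}(Q)$ for any $d$, since stable points have scalar stabilizers; coprimality is used to guarantee that semistable equals stable, not to produce freeness.)
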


\proof We recall the construction of the universal bundles $\mathcal{M}_i$ from \cite{King}. Since $d$ is coprime for $\Theta$, it is in particular indivisible, so we can choose integers $a_i$ for $i\in I$ such that $$\sum_{i\in I}a_id_i=1.$$
Consider the trivial vector bundle $$q:R_d^{\rm st}(Q)\times M_i\rightarrow R_d^{\rm st}(Q),$$
with action of $G_d$ given by
$$(g_j)_j\cdot((M_\alpha)_\alpha,m_i):=( (g_kM_\alpha g_j^{-1})_{\alpha:j\rightarrow k},\prod_{j\in I}\det(g_j)^{a_j}\cdot g_im_i).$$
The stabilizer of any point in $R_d^{\rm st}(Q)$ reduces to the scalars, which act trivially on the fibres of $q$ by definition of the action. Therefore, this trivial vector bundle descends to a bundle $\mathcal{M}_i$ on the geometric quotient $M_d^{\rm st}(Q)$. Consequently, the projective bundle ${\bf P}(\mathcal{M}_i)$ can be realized as the quotient of $R_d\times (M_i\setminus 0)$ by the action of $G_d$ via $$(g_i)_i\cdot((M_\alpha)_\alpha,m_i):=((g_jM_\alpha g_i^{-1})_{\alpha:i\rightarrow j},g_im_i).$$
More generally, the projective bundle ${\bf P}(\bigoplus_{i\in I}\mathcal{M}_i^{a_i})$ can be realized as the quotient
$${\bf P}(\bigoplus_{i\in I}\mathcal{M}_i^{a_i})=(R_d\times (\bigoplus_{i\in I}M_i^{a_i}\setminus 0))/G_d$$ by the induced action of $G_d$.\\[1ex]
On the other hand, we consider the variety $R_{d,n}^\Theta(Q)$. By the characterization of Proposition \ref{cos}, it consists of pairs $(M,f)$ of a stable representation $M$ and a non-zero map $f:V\rightarrow M$ as above, and $M_{d,n}^\Theta(Q)$ is the quotient of $G_d$ by this action. Consequently, we see that the two varieties in question are isomorphic.\hb

\remark It is likely that no universal bundle on $M_d^{\rm st}(Q)$ (or $M_d^{\rm sst}(Q)$) exists in case $d$ is not coprime. By the above proposition, we can therefore view the smooth models $M_{d,i}^\Theta(Q)$ as optimal approximations to the missing universal bundles.\\[2ex]
\examples We work out some examples of smooth models:

\begin{description}
\item[Example A] Let $Q$ be the quiver consisting of a single vertex and $m$ loops, so that the stability $\Theta$ is trivial. The smooth model $M_{d,1}^0(Q)$ coincides with the non-commutative Hilbert scheme $H_{d}^{(m)}$ considered in \cite{RNCH}, parametrizing finite codimensional left ideals in free algebras.

\item[Example B] Let $Q$ be the $r$-subspace quiver, given by vertices $I=\{0,1,\ldots,r\}$, and one arrow from $i$ to $0$, for $i=1,\ldots,r$. Consider dimension vectors $d$ such that $d_i\leq d_0$ for all $i=1,\ldots,r$. Define the stability $\Theta$ by $$\Theta_i=0\mbox{ for }i=1,\ldots,r\mbox{, and }\Theta_0=-1.$$
Then the moduli space $M_d^{\rm st}(Q)$ parametrizes stable ordered tuples of $d_i$-dimensional subspaces $U_i\subset V$ of a $d$-dimensional vector space, up to the action of ${\rm GL}(V)$. A tuple is
semistable if for all tuples of subspaces $U_i'\subset U_i$, we have
$$\frac{\dim\sum_iU_i'}{\dim V}\geq\frac{\sum_i\dim U_i'}{\sum_i\dim U_i},$$
and it is stable if the above inequality is strict.
This is one of the classical examples of Geometric Invariant Theory \cite{Mum}. As soon as $\sum_i \dim U_i$ and $\dim V$ are not coprime, there might exist properly semistable points.
Define the dimension vector $n$ by $$n_0=1\mbox{ and }n_i=0\mbox{ for }i=1,\ldots,r.$$
The smooth model $M_{d,n}^\Theta(Q)$ then parametrizes
ordered tuples $(U_i\subset V)_i$, together with a vector $v\in V$, such that the following condition holds: 
for all tuples of subspaces $U_i'\subset U_i$, the previous inequality holds, and it is strict if $v\in\sum_iU_i'$.
Such data are considered up to the natural action of ${\rm GL}(V)$.

\item[Example C] With obvious modifications, the previous example can be extended to moduli spaces parametrizing tuples of flags in vectorspaces, as in \cite[Example B]{RHNS}.

\item[Example D] Let $Q$ be the $m$-Kronecker quiver, with set of vertices $I=\{i,j\}$, and $m$ arrows from $i$ to $j$. Define the stability $\Theta$ by $$\Theta_i=1\mbox{ and }\Theta_j=0,$$
and let $d$ and $n$ be arbitrary dimension vectors. A representation of $Q$ of dimension vector $d$ corresponds to an $m$-tuple of linear maps $$(f_k:V\rightarrow W)_{k=1,\ldots,m},$$
considered up to simultaneous base change in $V$ and $W$. A tuple is semistable if for all proper subspaces $U\subset V$, we have
$$\sum_kf_k(U)\geq\frac{\dim W}{\dim V}\cdot\dim U,$$
and it is stable if the inequality is strict.
Fix vector spaces $V'$ and $W'$ of dimension $n_i$ and $n_j$, respectively. The smooth model $M_{d,n}^\Theta(Q)$ parametrizes stable tuples $(f_k)$, together with linear maps $$\varphi:V'\rightarrow V\mbox{ and }\psi:W'\rightarrow W,$$
up to base change in $V$ and $W$. Stability is defined by the above inequality, which has to be strict whenever $${\rm Im}(\varphi)\subset U\mbox{ and }{\rm Im }(\varphi)\subset\sum_kf_k(U).$$ 

\item[Example E] As a particular case of Example B, we consider the case $d_i=1$ for all $i=1,\ldots,r$. In this case, the moduli space $M_d^{\rm st}(Q)$ parametrizes stable ordered $r$-tuples of points $$(v_1,\ldots,v_r)\in({\bf P}^{d-1})^r$$ in projective space of dimension $d-1$ up to the action of ${\rm PGL}_r$, and the semistability condition reads
$$\frac{\dim\langle v_j\,:\, j\in J\rangle}{d}\geq\frac{|J|}{r}\mbox{ for all subsets }J\subset\{1,\ldots,r\}.$$
For the choice of $n$ as above, the smooth model $M_{d,n}^\Theta(Q)$ parametrizes stable $r+1$-tuples $(v_0,v_1,\ldots.v_r)$ in projective space of dimension $r-1$ up to the action of ${\rm PGL}_r$, where stability is defined by the above inequality, together
with the condition that $v_0\not\in\langle v_j\, :\, j\in J\rangle $ if equality holds.

\item[Example F] Even more particularly, consider the special case $d=2$ in the previous example. Then a tuple of points $(v_1,\ldots,v_r)$ in the projective line is stable if no more than $r/2$ of the points coincide. If $r$ is odd, all semistable tuples are already stable. If $r$ is even, the $\frac{1}{2}{r\choose r/2}$ isomorphism classes of nonstable polystable representations correspond to tuples consisting of two different points, each occuring with multiplicity $r/2$. The smooth model parametrizes tuples $(v_0,\ldots,v_r)$, such that $v_0$ is different from a point occuring with multiplicity $r/2$.

\end{description}

\section{The fibres of the Hilbert-Chow morphism and stratifications of smooth models}\label{fibres}

First we recall the Luna type stratification (see \cite{ALB, LBP}) of $M_d^{\rm sst}(Q)$. As mentioned in section \ref{recoll}, this moduli space parametrizes isomorphism classes $[M]$ of $\mu$-polystable representations of dimension vector $d$. Therefore, we can decompose such a representation $M$ into a direct sum
$$M=U_1^{z_1}\oplus\ldots\oplus U_s^{z_s}$$
of pairwise non-isomorphic $\mu$-stable representations $U_k$ of dimension vectors $\dimv U_k=d^k$ such that $\mu(d^k)=\mu(d)$ for all $k=1\ldots s$. We call the pair of tuples
$$\xi=(d^*=(d^1,\ldots,d^s),\;\;\; z_*=(z_1,\ldots,z_s))$$
the polystable type of $M$. Conversely, given such a pair of tuples $\xi=(d^*,z_*)$, we associate to it the Luna stratum $S_\xi$, the subset consisting of representations $M$ admitting a decomposition as above, for pairwise non-isomorphic stable representations $U_k$ of dimension vector $d^k$. It is clear from the above that this gives a decomposition of $M_d^{\rm sst}(Q)$ into finitely many disjoint locally closed subsets. The generic Luna stratum $S_{((d),(1))}$ obviously coincides with $M_d^{\rm st}(Q)\subset M_d^{\rm sst}(Q)$.\\[1ex]
For a given polystable type $\xi$, we define a new quiver datum as follows:\\[1ex]
Denote by $Q_\xi$ the quiver with set of vertices $\{1,\ldots,s\}$ and $\delta_{k,l}-\langle d^k,d^l\rangle$ arrows from $k$ to $l$, for each pair $1\leq k,l\leq s$. We define a dimension vector $d_\xi$ by $(d_\xi)_k:=z_k$ for all $k=1\ldots s$, and we define $(n_\xi)_k=n\cdot d^k$ for all $k$. We denote by
$$\pi_\xi:M_{d,n}^0(Q_\xi)\rightarrow M_{d_\xi}^{\rm ssimp}(Q_\xi)$$
the natural morphism. The fibre $\pi_\xi^{-1}(0)$ parametrizes (equivalence classes of) pairs $(Z,h)$ consisting of a nilpotent representation $Z$ of $Q_\xi$ of dimension vector $d_\xi$, together with a map $h$ whose image generates the representation $Z$. We therefore call $$\pi_\xi^{-1}(0)=:M_{d_\xi,n_\xi}^{\rm 0,nilp}(Q_\xi)$$ the nilpotent part of the smooth model $M_{d_\xi,n_\xi}^0(Q_\xi)$.\\[1ex]
Next we recall a strong form of the Luna slice theorem (\cite{Luna}, see \cite{Dr} for an introduction), which will be used in the proof of the following theorem. Let a reductive algebraic group $G$ act on an affine variety $Y$ with quotient $\pi_Y:Y\rightarrow Y//G$, and let $y\in Y$ be a point whose orbit $Gy$ is closed in $Y$, and thus has reductive stabilizer $G_y$. Luna's slice theorem asserts the existence of a $G_y$-invariant locally closed affine subvariety $S$ of $Y$ containing $y$ (the \'etale slice) such that the canonical map $\psi:G\times^{G_y}S\rightarrow Y$ is strongly \'etale in the following sense: the map $\psi$ is \'etale, its image is an open $G$-stable subset of $Y$, the induced map on quotients $\psi_{/G}:(G\times^{G_y}S)//G\simeq S//G_y\rightarrow U//G$ is \'etale, and the map $\psi$, together with the quotient map $G\times^{G_y}S\rightarrow S//G_y$, induces an isomorphism $G\times^{G_y}S\simeq S//G_y\times_{U//G}U$. These three properties can be summarized into a Cartesian square with vertical quotient morphisms and \'etale horizontal morphisms
$$\begin{array}{ccc}G\times^{G_y}S&\stackrel{\psi}{\rightarrow}&U\\ \downarrow&&\downarrow\\ S//G_y&\stackrel{\psi_{/G}}{\rightarrow}&U//G.\end{array}$$
As a consequence, for a point $s\in S//G_y$ with image $u=\psi_{/G}(s)$, we have an isomorphism $G\times^{G_y}\pi_S^{-1}(s)\simeq\pi_Y^{-1}(u)$, where $\pi_S:S\rightarrow S//G_y$ denotes the quotient map. Furthermore, if $Y$ is smooth at $y$, the slice $S$ can be chosen to be smooth, such that the inclusion $S\subset Y$ induces an isomorphism $T_yS\simeq N_y$, where $N_y$ denotes the normal space to the orbit $Gy$ in $Y$. In this case, there exists a strongly \'etale $G_y$-invariant morphism $\phi:S\rightarrow N_y$, resulting in a second Cartesian square as above.

\begin{theorem}\label{theoremfibres} The fibre $\pi^{-1}(M)$ of the morphism $\pi:M_{d,n}^\Theta(Q)\rightarrow M_d^{\rm sst}(Q)$ over a polystable representation $M$ of type $\xi$ is isomorphic to the nilpotent part of a smooth model:
$$\pi^{-1}(M)\simeq M_{d_\xi,n_\xi}^{\rm 0,nilp}(Q_\xi).$$
Moreover, the restriction of $\pi$ to the inverse image of the Luna stratum $S_\xi$ is a fibration, that is, it is locally trivial in the \'etale topology.
\end{theorem}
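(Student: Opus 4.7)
The plan is to apply a framed version of Luna's \'etale slice theorem to the $G_d$-action on $R_d^{\rm sst}(Q)$ at the polystable representation $M=\bigoplus_k U_k^{z_k}$ of type $\xi$. Being polystable, $M$ has closed $G_d$-orbit in $R_d^{\rm sst}(Q)$, and Schur's lemma identifies its stabilizer as $G_M=\prod_k{\rm GL}(z_k)=G_{d_\xi}$. The slice theorem supplies a smooth, $G_M$-invariant, locally closed affine subvariety $S\subset R_d^{\rm sst}(Q)$ through $M$ together with a $G_M$-equivariant \'etale morphism to the normal space $N_M$, such that both exhibit $S/\!/G_M$ as an \'etale neighbourhood of $[M]$ in $M_d^{\rm sst}(Q)$ via the Cartesian square coming from the strongly \'etale map $G_d\times^{G_M}S\to R_d^{\rm sst}(Q)$.

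Standard deformation theory identifies $N_M\cong{\rm Ext}^1_Q(M,M)=\bigoplus_{k,l}{\rm Hom}({\bf C}^{z_k},{\bf C}^{z_l})\otimes{\rm Ext}^1_Q(U_k,U_l)$ as a $G_M$-module. Since the $U_k$ are stable of the same slope, $\dim{\rm Ext}^1_Q(U_k,U_l)=\delta_{k,l}-\langle d^k,d^l\rangle$, so this $G_M$-representation matches $R_{d_\xi}(Q_\xi)$ exactly. Pulling the slice back along the $G_d$-equivariant forgetful morphism $R_{\hat d}(\hat Q)=R_d(Q)\times{\rm Hom}(V,{\bf C}^d)\to R_d(Q)$ yields a strongly \'etale neighbourhood $G_d\times^{G_M}(S\times{\rm Hom}(V,M))$ of the preimage of $G_d\cdot M$ in $R_{\hat d}(\hat Q)$. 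The decomposition ${\rm Hom}(V,M)=\bigoplus_k{\rm Hom}(V,U_k)\otimes{\bf C}^{z_k}$, together with $\dim{\rm Hom}(V,U_k)=n\cdot d^k=(n_\xi)_k$, $G_M$-equivariantly identifies this framing space with the framing space attached to $\hat Q_\xi$ for the dimension vector $\hat{d_\xi}$.

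The key step is to match the stability conditions. By Proposition \ref{cos}, $\hat\Theta$-stability of $(M',f)$ with $M'\in S$ near $M$ requires that no proper subrepresentation of $M'$ of slope $\mu(d)$ contains ${\rm Im}(f)$. A subrepresentation of $M$ of slope $\mu(d)$ is semistable of that slope and has all stable subquotients among the $U_k$, whence (using ${\rm Hom}_Q(U_k,U_l)=\delta_{k,l}\cdot{\bf C}$) necessarily of the form $\bigoplus_k U_k\otimes W_k$ for subspaces $W_k\subseteq{\bf C}^{z_k}$; translating this along the slice identification shows that the stability condition on $(M',f)$ corresponds precisely to the $\hat 0$-stability of the associated pair $(Z,h)$ for the datum $(Q_\xi,d_\xi,n_\xi)$ in the sense of Proposition \ref{cos}. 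Taking $G_d$-quotients of the strongly \'etale neighbourhoods, we obtain an \'etale-local identification of $M_{d,n}^\Theta(Q)$ near $\pi^{-1}([M])$ with the corresponding \'etale neighbourhood of the nullcone fibre in $M_{d_\xi,n_\xi}^0(Q_\xi)$. The fibre over $[M]$ corresponds to the closed orbit $0\in S/\!/G_M$, whose preimage in $R_{d_\xi}(Q_\xi)$ is the nullcone $N_{d_\xi}(Q_\xi)$, yielding $\pi^{-1}([M])\cong M_{d_\xi,n_\xi}^{\rm 0,nilp}(Q_\xi)$.

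Finally, the fibration statement follows because the local quiver datum $(Q_\xi,d_\xi,n_\xi)$ depends only on $\xi$: the construction above applied at any $[M']\in S_\xi$ in a neighbourhood of $[M]$ produces the identical model, so the \'etale-local trivializations of $\pi$ assemble into a fibration over $S_\xi$. The principal obstacle is the careful bookkeeping in the stability-matching step, ensuring that the $\hat\Theta$-stability condition for the original datum transfers, through the slice identifications, to the $\hat 0$-stability condition for the local quiver datum; once this is in place, the theorem follows by passing to $G_d$-quotients via Luna's theorem.
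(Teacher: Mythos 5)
Your proposal is correct and follows essentially the same route as the paper: a Luna slice at the polystable point, identification of the normal space with the framed local quiver representation space $R_{\hat{d}_\xi}(\hat{Q}_\xi)$, a matching of the stability conditions via a correspondence of subrepresentations, and passage to $G$-quotients. The only substantive differences are organizational: the paper applies the slice theorem directly to the framed variety $R_{\hat{d}}^{\hat{\Theta}_0\rm{-sst}}(\hat{Q})$ equipped with the degenerate stability $\hat{\Theta}_0$ (for which $(M,0)$ has closed orbit), rather than slicing the unframed $R_d^{\rm sst}(Q)$ and pulling back along the forgetful map; and in the stability-matching step the paper handles \emph{arbitrary} points $M'$ of the fibre --- not just the polystable $M$, whose slope-$\mu(d)$ subrepresentations you describe as $\bigoplus_k U_k\otimes W_k$ --- by intersecting a slope-$\mu(d)$ subrepresentation of $M'$ with the filtration by the $U_k$ and using that semistables of fixed slope form an abelian subcategory, a refinement your ``translating along the slice identification'' should be expanded into.
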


\proof We consider the datum $( \hat{Q}, \hat{d})$ as in section \ref{smoothmodels}, but define a degenerate stability $ \hat{\Theta}_0$ by $$( \hat{\Theta}_0)_i=\Theta_i\mbox{ for }i\in I\mbox{ and }( \hat{\Theta}_0)_\infty=\mu(d).$$
This corresponds to the case $\varepsilon=0$ in Definition \ref{defsm}. We can easily analyze stability of representations $(M,f)$ of $ \hat{Q}$ of dimension vector $ \hat{d}$ with respect to the slope function $ \hat{\mu}_0$ defined via $ \hat{\Theta}_0$, using obvious variants of Lemma \ref{bphs} and Proposition \ref{cos}, resulting in the following:
\begin{itemize}
\item $ \hat{\mu}_0(e)\leq \hat{\mu}_0( \hat{d})$ if and only if $\mu(e)\leq\mu(d)$,
\item $ \hat{\mu}_0( \hat{e})\leq \hat{\mu}_0( \hat{d})$ if and only if $\mu(e)\leq\mu(d)$,
\item a representation $(M,f)$ is $ \hat{\mu}_0$-semistable if and only if $M$ is a semistable representation of $Q$,
\item $(M,f)$ is $ \hat{\mu}_0$-stable if and only if either $M$ is stable and $f=0$, or $M=0$,
\item $(M,f)$ is $ \hat{\mu}_0$-polystable if and only if $M$ is polystable and $f=0$.
\end{itemize}
Since the points of the moduli space $M_{ \hat{d}}^{\rm  \hat{\Theta}_0-sst}( \hat{Q})$ parametrize isomorphism classes of polystables, we conclude that the natural map $R_{ \hat{d}}( \hat{Q})\rightarrow R_d(Q)$ induces a map $$R_{ \hat{d}}^{\rm  \hat{\Theta}_0-sst}( \hat{Q})\rightarrow R_d^{\rm sst}(Q),$$
which descends to an isomorphism of moduli $$M_{ \hat{d}}^{\rm  \hat{\Theta}_0-sst}( \hat{Q})\stackrel{\sim}{\rightarrow}M_d^{\rm sst}(Q).$$
We want to apply the Luna slice theorem to the fibres of the resulting morphism $$ \hat{q}:R_{ \hat{d}}^{\rm  \hat{\Theta}_0-sst}( \hat{Q})\rightarrow M_d^{\rm sst}(Q),$$
using techniques of \cite{ALB}. Given a point $M\in M_d^{\rm sst}(Q)$ of polystable type $\xi$ as above, we construct a smooth affine open $G_{ \hat{d}}$-stable subvariety $X$ such that
$$ \hat{q}^{-1}(M)\subset X\subset R_{ \hat{d}}^{\rm  \hat{\Theta}_0-sst}( \hat{Q})$$ as in \cite[section 3]{ALB}. We associate extended local quiver data $ \hat{Q}_\xi$ and $ \hat{d}_\xi$ to $(Q_\xi, d_\xi, n_\xi)$ as in section \ref{smoothmodels} and consider the quotient morphism $$q_\xi:R_{ \hat{d}_\xi}( \hat{Q}_\xi)\rightarrow M_{d_\xi}^{\rm ssimp}(Q_\xi).$$
According to \cite{ALB}, the affine space $R_{ \hat{d}_\xi}( \hat{Q}_\xi)$ can be identified with the normal space at the point $$(M,0)\in R_{ \hat{d}}^{\rm  \hat{\Theta}_0-sst}( \hat{Q})$$ to the $G_{ \hat{d}}$-orbit of $(M,0)$. As in \cite[section 4]{ALB}, the Luna slice theorem in the form recalled above can be applied to the restriction of $ \hat{q}$ to $X$. It yields a locally closed affine subvariety $S$ of $X$ containing the point $(M,0)$, which is stable under the stabilizer $$(G_{ \hat{d}})_{(M,0)}\simeq G_{d_\xi}.$$
Moreover, there exists a diagram of Cartesian squares with \'etale horizontal maps
\begin{center}
\includegraphics{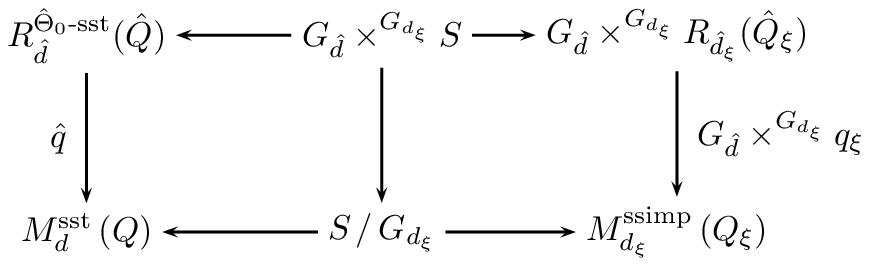}
\end{center}
%$$\begin{array}{rcccl}
%R_{ \hat{d}}^{\rm  \hat{\Theta}_0-sst}( \hat{Q})&\leftarrow&G_{ \hat{d}}\times^{G_{d_\xi}}S&\rightarrow&G_{ \hat{d}}\times^{G_{d_\xi}}R_{ \hat{d}_\xi}( \hat{Q}_\xi)\\  \hat{q}\downarrow&&\downarrow&&\downarrow G_{ \hat{d}}\times^{G_{d_\xi}}q_\xi\\
%M_d^{\rm sst}(Q)&\leftarrow&S/G_{d_\xi}&\rightarrow&M_{d_\xi}^{\rm ssimp}(Q_\xi)\end{array}$$
%with all horizontal maps being \'etale, and all vertical maps being $G_{ \hat{d}}$-quotients. Additionaly, the restriction of the morphism $ \hat{q}$ to the inverse image of the stratum $S_\xi$ is a $G_d$-fibration, thus $G_d$-equivariantly locally trivial in the \'etale topology by \cite[Corollaire 5]{Luna}.  Thus, we have an isomorphism
%$$ \hat{q}^{-1}(M)\stackrel{\sim}{\leftarrow}G_{ \hat{d}}\times^{G_{d_\xi}}q_\xi^{-1}(0).$$
We now want to compare the intersections
$$ \hat{q}^{-1}(M)\cap R_{d,n}^\Theta(Q)\mbox{ and }q_\xi^{-1}(0)\cap R_{d_\xi,n_\xi}^0(Q_\xi).$$
Suppose we are given a point $(Z,h)$ in the fibre $q_\xi^{-1}(0)$, which corresponds to a point $(N,f)$ in the fibre $ \hat{q}^{-1}(M)$ under the above isomorphism. The representation $N$ thus admits a filtration $F^*$ with subquotients isomorphic to the stable direct summands $U_k$ of $M$. Assume we are given a subrepresentation $V\subset N$ containing the image of $f$ such that $\mu(V)=\mu(N)$. Since semistable representations of a fixed slope form an abelian subcategory, the intersection of $V$ with the filtration $F^*$ induces a filtration of $V$ with subquotients isomorphic to some of the $U_k$. Thus, there exists a subrepresentation $W\subset Z$ containing the image of $h$. Conversely, a subrepresentation $W$ of $Z$ containing the image of $h$ induces a subrepresentation $V$ of $N$ containing the image of $f$, such that $\mu(V)=\mu(N)$.\\[1ex]
Since the open subsets $R_{d,n}^\Theta(Q)$ and $R_{d_\xi,n_\xi}^0(Q_\xi)$, respectively, are defined by the triviality of such subrepresentations, we conclude that the above isomorphism of fibres restricts to an isomorphism
$$ \hat{q}^{-1}(M)\cap R_{d,n}^\Theta(Q)\stackrel{\sim}{\leftarrow}G_{ \hat{d}}\times^{G_{d_\xi}}(q_\xi^{-1}(0)\cap R_{d_\xi,n_\xi}^0(Q_\xi)).$$
Passage to $G_{ \hat{d}}$-quotients yields the desired isomorphism and the \'etale local triviality.\hb

We define a stratification of $M_{d,n}^\Theta(Q)$ as follows: for any polystable type $\xi$, define
$$M_{d,n}^\Theta(Q)_\xi=\pi^{-1}(S_\xi).$$
The methods developed above now immediately give the following.

\begin{corollary}\label{strat} The smooth model $M_{d,n}^\Theta(Q)$ is the disjoint union of the finitely many locally closed strata $M_{d,n}^\Theta(Q)$, for various polystable types $\xi$. Each stratum $M_{d,n}^\Theta(Q)$ is an \'etale locally trivial fibration over the corresponding Luna stratum $S_\xi$, with fibre isomorphic to $M_{d_\xi,n_\xi}^{\rm 0,nilp}(Q_\xi)$. In particular, the generic stratum $M_{d,n}^\Theta(Q)_{((d),(1))}$ is a ${\bf P}^{n\cdot d-1}$-fibration over $M_d^{\rm st}(Q)$.
\end{corollary}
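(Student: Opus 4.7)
The plan is to combine the Luna stratification of $M_d^{\rm sst}(Q)$ (recalled at the beginning of Section \ref{fibres}) with the fibrewise analysis provided by Theorem \ref{theoremfibres}. The disjoint union decomposition and the local structure of each stratum are formal consequences, so the only real work is to identify the fibre over the generic stratum.

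First, I would invoke the standard fact that $M_d^{\rm sst}(Q) = \bigsqcup_\xi S_\xi$ is a decomposition into finitely many disjoint locally closed subvarieties, where $\xi$ runs over polystable types of dimension vector $d$. Pulling back via the Hilbert--Chow morphism $\pi: M_{d,n}^\Theta(Q) \to M_d^{\rm sst}(Q)$ and using the definition $M_{d,n}^\Theta(Q)_\xi := \pi^{-1}(S_\xi)$, we immediately obtain a disjoint decomposition of $M_{d,n}^\Theta(Q)$ into finitely many locally closed subvarieties. The étale local triviality of $\pi$ over each $S_\xi$, together with the identification of the fibres as $M_{d_\xi,n_\xi}^{0,\mathrm{nilp}}(Q_\xi)$, is then the direct content of Theorem \ref{theoremfibres}.

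The remaining task is to compute the fibre explicitly for the generic stratum $\xi = ((d),(1))$, which coincides with the open subset $M_d^{\rm st}(Q) \subset M_d^{\rm sst}(Q)$. Unpacking the definitions in Section \ref{fibres}, the local quiver $Q_\xi$ has one vertex with $1 - \langle d,d\rangle$ loops, the dimension vector is $d_\xi = (1)$, and the framing vector is $(n_\xi)_1 = n \cdot d$. A representation $Z$ of $Q_\xi$ of dimension vector $(1)$ consists of a scalar attached to each loop; nilpotency forces all such scalars to vanish. By Proposition \ref{cos} (applied with trivial stability), the pair $(Z,h)$ defines a point of $M_{d_\xi,n_\xi}^0(Q_\xi)$ exactly when the image of $h \colon \mathbf{C}^{n\cdot d} \to \mathbf{C}$ generates $Z$, which for a one-dimensional $Z$ simply means $h \neq 0$. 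The residual action is by $G_{d_\xi} = \mathbf{C}^\times$ via scaling, so the quotient is $(\mathbf{C}^{n\cdot d} \setminus \{0\})/\mathbf{C}^\times \cong \mathbf{P}^{n\cdot d - 1}$, giving the asserted projective space fibration.

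Since Theorem \ref{theoremfibres} encapsulates the Luna slice argument, no step poses a serious obstacle; the only point requiring care is the explicit fibre computation for the generic stratum, where one must check that the nullcone condition is automatic and that the generation condition on $h$ reduces to $h \neq 0$.
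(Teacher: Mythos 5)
Your proposal is correct and follows exactly the route the paper intends: the corollary is a formal consequence of Theorem \ref{theoremfibres} applied stratum by stratum, plus the explicit identification of the fibre over the generic stratum. Your computation of that fibre (one vertex, $1-\langle d,d\rangle$ loops killed by nilpotency, framing vector $n\cdot d$, quotient $(\mathbf{C}^{n\cdot d}\setminus\{0\})/\mathbf{C}^\times\cong\mathbf{P}^{n\cdot d-1}$) is precisely the check the paper leaves implicit, and it is carried out correctly.
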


\remark The smooth models $M_{d,n}^\Theta(Q)$ can thus be viewed as a "compactification" (being projective over the affine moduli $M_d^{\rm ssimp}(Q)$) of a ${\bf P}^{n\cdot d-1}$-fibration over $M_d^{\rm st}(Q)$.\\[2ex]
\examples
\begin{enumerate}
\item The above result was obtained in the case of Example A in \cite{LBS}.
\item Continuing Example F above, we describe the fibre of $\pi$ over the polystable nonstable points in the case $r$ is even: the quiver $Q_\xi$ has two vertices $1,2$, and the number of arrows from $i$ to $j$ equals $r/2-1+\delta_{i,j}$. The dimension vector is given by $$(d_\xi)_1=1=(d_\xi)_2.$$
Thus, the loops at the vertices can be ignored, since the nilpoteny condition forces them to be represented by zero. The dimension vector $n_\xi$ is given by $$(n_\xi)_1=1=(n_\xi)_2.$$
The resulting nilpotent part of the smooth models is isomorphic to two copies of a line bundle over projective space of dimension $r-2$, glued at a single point.
\end{enumerate}

\section{Cohomology}\label{cohomology}

As in \cite{CR,RC}, we consider a completed version of the Hall algebra of the quiver $Q$ and perform computations with certain generating functions in it. By applying an evaluation map, this yields an identity involving the number of rational points of $M_{d,n}^\Theta(Q)$ over a finite field ${\bf F}_q$.\\[2ex] 
Define $$H(({\bf F}_qQ))=\prod_{[M]}{\bf Q}\cdot[M]$$ as the direct product of one-dimensional ${\bf Q}$-vector spaces with basis elements $[M]$, indexed by the isomorphism classes of finite dimensional representations of ${\bf F}_qQ$, and with the following multiplication: $$[M]\cdot[N]=\sum_{[X]}F_{M,N}^X\cdot[X],$$
where $F_{M,N}^X$ denotes the number of ${\bf F}_qQ$-subrepresentations $U$ of $X$ such that $U\simeq N$ and $X/U\simeq M$.\\[1ex]
This defines an associative unital ${\bf N}I$-graded ${\bf Q}$-algebra by \cite{RC}.
Note furthermore that the direct product $$\prod_{[M]\in{\rm Rep}^s_{{\bf F}_q}(Q)}{\bf Q}\cdot[M]$$ over all isomorphism classes in ${\rm Rep}^s_{{\bf F}_q}(Q)$ defines a subalgebra of $H(({\bf F}_qQ))$.\\[1ex]
Define ${\bf Q}_q[[I]]$ as the direct product of the one-dimensional ${\bf Q}$-vector spaces with basis elements $t^d$ indexed by $d\in{\bf N}I$, and with multiplication $$t^d\cdot t^e=q^{-\langle d,e\rangle}t^{d+e}.$$
The map sending a basis element $[M]$ to the element $$\int[M]:=\frac{1}{|{\rm Aut}(M)|}t^{\dimv M}$$ induces a ${\bf Q}$-algebra homomorphism $$\int:H(({\bf F}_qQ))\rightarrow {\bf Q}_q[[I]]$$ by \cite{RHNS}.\\[3ex]
Let $\mathcal{C}$ be a full abelian subcategory of ${\rm Rep}_{{\bf F}_q}(Q)$, that is, $\mathcal{C}$ is closed under kernels, cokernels and extensions. If $U\subset M$ is a subrepresentation of a representation $M\in\mathcal{C}$, we therefore have a well-defined minimal subrepresentation $$\langle U\rangle_\mathcal{C}=\bigcap_{{U\subset V\subset M}\atop{V\in\mathcal{C}}}\in\mathcal{C}$$ of $M$ containing $U$, defined as the intersection of all subrepresentations $V\in\mathcal{C}$ of $M$ containing $U$.\\[1ex]
Fix an arbitrary representation $Z\in{\rm Rep}_{{\bf F}_q}(Q)$. We denote by ${\rm Hom}^0_{{{\bf F}_q}Q}(Z,M)$ the set of all ${{\bf F}_q}Q$-morphisms $f$ from $Z$ to $M$ with the following property:
\begin{center}if ${\rm Im}(f)\subset U\subset M$ for $U\in\mathcal{C}$, then $U=M$.\end{center}
We consider the following elements of $H((Q))$:
$$h_Z:=\sum_{[M]\in\mathcal{C}}|{\rm Hom}_{{{\bf F}_q}Q}^0(Z,M)|\cdot[M],$$
$$e_Z:=\sum_{[M]\in\mathcal{C}}|{\rm Hom}_{{{\bf F}_q}Q}(Z,M)|\cdot[M],$$
$$e_0:=\sum_{[M]\in\mathcal{C}}[M].$$

\begin{lemma}\label{keyidentity} We have $e_0\cdot h_Z=e_Z$ in $H((Q))$.
\end{lemma}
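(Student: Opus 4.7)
The strategy is to compare the coefficient of each basis element $[X]$ on both sides. Expanding the product using the multiplication rule in $H((Q))$, we obtain
\[
e_0 \cdot h_Z \;=\; \sum_{[M],[N]\in\mathcal{C}} |\mathrm{Hom}^0_{\mathbf{F}_q Q}(Z,N)| \sum_{[X]} F^X_{M,N}\,[X],
\]
so that the coefficient of $[X]$ equals
\[
\sum_{[M],[N]\in\mathcal{C}} |\mathrm{Hom}^0_{\mathbf{F}_q Q}(Z,N)|\cdot |\{U\subset X : U\simeq N,\; X/U\simeq M\}| \;=\; \sum_{\substack{U\subset X\\ U,\,X/U\in\mathcal{C}}} |\mathrm{Hom}^0_{\mathbf{F}_q Q}(Z,U)|,
\]
the last equality since $\mathrm{Hom}^0(Z,N)$ only depends on the isomorphism class. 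On the right hand side, the coefficient of $[X]$ is $|\mathrm{Hom}_{\mathbf{F}_q Q}(Z,X)|$ when $[X]\in\mathcal{C}$, and zero otherwise. The case $[X]\notin\mathcal{C}$ is then immediate: closure of $\mathcal{C}$ under extensions forces the inner sum to be empty, matching the zero on the right.

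The heart of the argument is to establish, for every $X\in\mathcal{C}$, a natural bijection
\[
\mathrm{Hom}_{\mathbf{F}_q Q}(Z,X) \;\stackrel{\sim}{\longleftrightarrow}\; \bigsqcup_{\substack{U\subset X\\ U,\,X/U\in\mathcal{C}}} \mathrm{Hom}^0_{\mathbf{F}_q Q}(Z,U).
\]
The assignment from left to right sends a morphism $f\colon Z\to X$ to the pair $(U(f),f')$, where $U(f):=\langle \mathrm{Im}(f)\rangle_\mathcal{C}$ is the smallest $\mathcal{C}$-subrepresentation of $X$ containing $\mathrm{Im}(f)$, and $f'$ is the factorization of $f$ through the inclusion $U(f)\hookrightarrow X$. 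The inverse is composition with the inclusion. Minimality of $U(f)$ ensures that $f'\in\mathrm{Hom}^0(Z,U(f))$, and conversely, for $f'\in\mathrm{Hom}^0(Z,U)$ the minimal $\mathcal{C}$-subrepresentation of $X$ containing $\mathrm{Im}(\iota\circ f')$ coincides with $U$ because any smaller $\mathcal{C}$-subrepresentation would contradict the condition $f'\in\mathrm{Hom}^0(Z,U)$.

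The step that requires the closure hypotheses on $\mathcal{C}$ is to verify that for any $U\subset X$ with $X\in\mathcal{C}$ and $U\in\mathcal{C}$, automatically $X/U\in\mathcal{C}$: indeed, the inclusion $U\hookrightarrow X$ is a morphism in the full subcategory $\mathcal{C}$, and its cokernel, computed in the ambient category, is $X/U$, which lies in $\mathcal{C}$ by closure under cokernels. This lets us reindex the inner sum equivalently over $U\subset X$ with $U\in\mathcal{C}$. The only potential obstacle is justifying that $\langle\mathrm{Im}(f)\rangle_\mathcal{C}$ is itself an object of $\mathcal{C}$, i.e.\ that finite intersections of $\mathcal{C}$-subrepresentations are again in $\mathcal{C}$; this is recorded in the paragraph preceding the lemma and may be reduced to the kernel-closure property applied to a suitable difference map between direct sums of $\mathcal{C}$-subrepresentations, so no further work is needed here. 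Once this is in place, summing $|\mathrm{Hom}^0(Z,U)|$ over the strata of the bijection reproduces $|\mathrm{Hom}(Z,X)|$, and the identity $e_0\cdot h_Z=e_Z$ follows.
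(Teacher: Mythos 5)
Your argument is correct and follows essentially the same route as the paper's proof: expand the product, reindex the coefficient of $[X]$ as a sum over $\mathcal{C}$-subrepresentations $U\subset X$, and establish the bijection $f\mapsto(\langle\mathrm{Im}(f)\rangle_{\mathcal{C}},f)$ between $\mathrm{Hom}(Z,X)$ and pairs $(U,f')$ with $f'\in\mathrm{Hom}^0(Z,U)$. Your extra remarks (the vanishing for $[X]\notin\mathcal{C}$ via extension-closure, the cokernel-closure argument for reindexing, and the well-definedness of $\langle\cdot\rangle_{\mathcal{C}}$) only make explicit details the paper leaves implicit.
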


\proof We have
\begin{eqnarray*}e_0\cdot h_Z&=&\sum_{[M],[N]\in\mathcal{C}}|{\rm Hom}^0_{{{\bf F}_q}Q}(Z,N)|\cdot[M]\cdot[N]\\
&=&\sum_{[M],[N]\in\mathcal{C}}|{\rm Hom}^0_{{{\bf F}_q}Q}(Z,N)|\cdot(\sum_{[X]}F_{M,N}^X\cdot[X])\\
&=&\sum_{[X]\in\mathcal{C}}(\sum_{[M],[N]\in\mathcal{C}}F_{M,N}^X\cdot|{\rm Hom}^0_{{{\bf F}_q}Q}(Z,N)|)\cdot[X]\\
&=&\sum_{[X]\in\mathcal{C}}(\sum_{U\subset X, U\in\mathcal{C}}|{\rm Hom}_{{{\bf F}_q}Q}^0(Z,U)|)[X],
\end{eqnarray*}
where the reparametrization in the last equality uses the definition of $F_{M,N}^X$.
To prove that this equals $$e_Z=\sum_{[X]\in\mathcal{C}}|{\rm Hom}_{{{\bf F}_q}Q}(Z,X)|\cdot[X],$$
it thus suffices to exhibit a bijection between ${\rm Hom}_{{{\bf F}_q}Q}(Z,X)$ and the set of pairs $(U,f)$, where $U\subset X$, $U\in\mathcal{C}$ and $f\in{\rm Hom}_{{{\bf F}_q}Q}^0(Z,U)$. This is given by assigning to $f:Z\rightarrow X$ the pair $$(\langle{\rm Im}(f)\rangle_\mathcal{C},f:Z\rightarrow\langle{\rm Im}(f)\rangle_\mathcal{C}),$$
with converse map assigning to a pair $(U,f)$ the composite $$f:Z\rightarrow U\rightarrow X.$$\hb

\begin{theorem}\label{genfunbetti} For all $(Q,\Theta,n)$ as before, we have the following identity of generating functions in ${\bf Q}_q[[I]]$:
$$\sum_{d\in({\bf N}I)_s}P_{M_{d,n}^\Theta(Q)}(q)t^d=(\sum_{d\in({\bf N}I)_s}P_d(q)t^d)^{-1}\cdot\sum_{d\in({\bf N}I)_s}q^{n\cdot d}P_d(q)t^d.$$
\end{theorem}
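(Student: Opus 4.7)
The plan is to deduce the generating-function identity by applying the integration morphism $\int:H(({\bf F}_qQ))\to{\bf Q}_q[[I]]$ to the Hall-algebra identity $e_0\cdot h_Z=e_Z$ of Lemma \ref{keyidentity}, specialized to the abelian subcategory $\mathcal{C}=\mathrm{Rep}_{{\bf F}_q}^s(Q)$ of semistable representations of slope $s$ and to the projective representation $Z=P^{(n)}=\bigoplus_{i\in I}P_i^{n_i}$.

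The first step is to match $\mathrm{Hom}^0_{{\bf F}_qQ}(P^{(n)},M)$ with the framing data parametrized by the smooth model. The key observation is that for semistable $M$ of slope $s$, a subrepresentation $U\subset M$ lies in $\mathcal{C}$ if and only if $\mu(U)=\mu(M)$: semistability of $M$ forces $\mu(U)\le\mu(M)$, and equality forces $U$ to be semistable itself. Thus $f\in\mathrm{Hom}^0(P^{(n)},M)$ iff every proper subrepresentation $U$ containing $\mathrm{Im}(f)$ satisfies $\mu(U)<\mu(M)$, which is exactly the stability condition of Proposition \ref{cos} under the natural identification $\mathrm{Hom}_Q(P^{(n)},M)\simeq\bigoplus_i\mathrm{Hom}(V_i,M_i)$. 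Since $\hat d$ is coprime for $\hat\Theta$ by Lemma \ref{bphs}, the stabilizer of any stable framed representation in $\mathrm{Aut}(M)$ is trivial, so orbit-counting gives
$$\sum_{[M]\in\mathcal{C},\,\dimv M=d}\frac{|\mathrm{Hom}^0_{{\bf F}_qQ}(P^{(n)},M)|}{|\mathrm{Aut}(M)|}=|M_{d,n}^\Theta(Q)({\bf F}_q)|.$$

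Next I would evaluate $\int e_0$, $\int h_Z$, $\int e_Z$ separately. Orbit-stabilizer for the $G_d$-action on $R_d^{\rm sst}(Q)$ yields $\int e_0=\sum_{d\in({\bf N}I)_s}\tfrac{|R_d^{\rm sst}(Q)({\bf F}_q)|}{|G_d({\bf F}_q)|}t^d$, and the inversion of the Harder-Narasimhan stratification recalled in subsection \ref{23} identifies this coefficient with $P_d(q)$ of Definition \ref{admissible}. The preceding identity gives $\int h_Z=\sum_d|M_{d,n}^\Theta(Q)({\bf F}_q)|t^d$. Finally $|\mathrm{Hom}_Q(P^{(n)},M)|=\prod_iq^{n_id_i}=q^{n\cdot d}$ produces $\int e_Z=\sum_dq^{n\cdot d}P_d(q)t^d$. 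Applying the algebra homomorphism $\int$ to $e_0\cdot h_Z=e_Z$ and dividing by the unit $\sum_dP_d(q)t^d$ (whose constant coefficient is $1$) then gives the desired identity at the level of ${\bf F}_q$-point counts.

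The last step is to replace point counts by Poincar\'e polynomials over ${\bf C}$. Since $\hat d$ is coprime for $\hat\Theta$, Proposition \ref{purity} applies to $M_{d,n}^\Theta(Q)$, and the Weil-conjecture argument used in the proof of Theorem \ref{Betti} yields $|M_{d,n}^\Theta(Q)({\bf F}_q)|=P_{M_{d,n}^\Theta(Q)}(q)$. The main subtlety I foresee is the bookkeeping around Lemma \ref{keyidentity}: one must verify that $e_0$, $h_Z$ and $e_Z$ are well-defined elements of the completed algebra $H(({\bf F}_qQ))$ (routine, since only finitely many isomorphism classes contribute in each fixed dimension vector), and that $\int$ remains multiplicative on these infinite sums, so that the Euler-form twist built into the multiplication of ${\bf Q}_q[[I]]$ produces exactly the right-hand side of the theorem. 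Once this is in place, the argument is a direct translation of Lemma \ref{keyidentity}, in the spirit of the Hall-algebra computations in \cite{RC} and \cite{CR}.
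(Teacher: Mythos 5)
Your proposal is correct and follows essentially the same route as the paper: apply the integration map to the Hall-algebra identity $e_0\cdot h_Z=e_Z$ of Lemma \ref{keyidentity} with $\mathcal{C}={\rm Rep}^s_{{\bf F}_q}(Q)$ and $Z=P^{(n)}$, identify $\int h_Z$ with the point counts of the smooth models via Proposition \ref{cos} and free-quotient orbit counting, identify the coefficients of $\int e_0$ and $\int e_Z$ with $P_d(q)$ and $q^{n\cdot d}P_d(q)$, and pass from point counts to Betti numbers by the purity argument. The extra care you take with the identification of $\mathcal{C}$-subrepresentations as equal-slope subrepresentations and with the invertibility of $\sum_dP_d(q)t^d$ only makes explicit what the paper leaves implicit.
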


\proof We apply the identity of Lemma \ref{keyidentity} to the abelian subcategory $$\mathcal{C}={\rm Rep}^s_{{\bf F}_q}(Q)$$ and the representation $$Z=P^n=\bigoplus_{i\in I}P_i^{n_i}$$ for $n\in{\bf N}I$, denoting the elements $h_Z$, $e_0$, $e_Z$ by $h_n$, $e_0$, $e_n$, respectively. We apply integration to these elements, noting that
$$\frac{1}{|{\rm Aut}_{{{\bf F}_q}Q}(M)({{\bf F}_q})|}=\frac{|\mathcal{O}_M({{\bf F}_q})|}{|G_d({{\bf F}_q})|}.$$
The integral of $e_n$ thus can be computed as
$$\sum_{[M]\in{\rm Rep}_{{\bf F}_q}^s (Q)}q^{n\cdot\dimv M}\frac{1}{|{\rm Aut}_{{{\bf F}_q}Q}(M)({{\bf F}_q})|}t^{\dimv M}=\sum_{d\in{\bf N}I}q^{n\cdot d}\frac{|R_d^{\rm sst}(Q)({{\bf F}_q})|}{|G_d({{\bf F}_q})|}t^d.$$
For $h_n$, we first note that, by the definitions and Lemma \ref{cosplus}, a point $(M,f)$ in $M_{d,n}^\Theta(Q)$ is given by a $\mu$-semistable representation $M$ of $Q$, together with a morphism $$f\in{\rm Hom}^0_{{{\bf F}_q}Q}(P^n,M),$$
considered up to the natural action of $G_d$ on such pairs. From this, we get:
$$\int h_n=\sum_{[M]\in{\rm Rep}_{{\bf F}_q}^s(Q)}|{\rm Hom}_{{{\bf F}_q}Q}^0(P^n,M)({{\bf F}_q})|\frac{1}{|{\rm Aut}_{{{\bf F}_q}Q}(M)({{\bf F}_q})|}t^{\dimv M}=$$
$$=\sum_{d\in{\bf N}I}(\sum_{M\in R_d^{\rm sst}(Q)}|{\rm Hom}_{{{\bf F}_q}Q}^0(P^n,M)({{\bf F}_q})|\frac{|\mathcal{O}_M({\bf F}_q)|}{|{\rm Aut}_{{{\bf F}_q}Q}(M)({{\bf F}_q})|})t^d=$$
$$=\sum_{d\in{\bf N}I}\frac{|R_{d,n}^{\Theta}(Q)({{\bf F}_q})|}{|G_d({{\bf F}_q})|}t^d.$$
By \cite[Proposition 6.6]{RHNS}, the fraction in this last sum equals $|M_{d,n}^\Theta(Q)({{\bf F}_q})|$. The identity
$$\int h_n=(\int e_0)^{-1}\cdot\int e_n$$
now gives the statement.\hb

The theorem immediately yields a recursive formula for the Poincar\'e polynomial of $M_{d,n}^\Theta(Q)$:

\begin{corollary}\label{recursion} For all $(Q,d,\Theta,n)$ as before, we have
$$P_{M_{d,n}^\Theta(Q)}(q)=(q^{n\cdot d}-1)P_d(q)-\sum_{\substack{0<e<d\\ e\in({\bf N}I)_s}}q^{-\langle d-e,e\rangle}P_{d-e}(q)P_{M_{e,n}^\Theta(Q)}(q).$$
\end{corollary}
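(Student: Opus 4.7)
The approach is to extract coefficients in Theorem \ref{genfunbetti}. Let me denote
\[
A(t) = \sum_{d \in ({\bf N}I)_s} P_d(q)\, t^d, \qquad B(t) = \sum_{d \in ({\bf N}I)_s} q^{n\cdot d} P_d(q)\, t^d, \qquad C(t) = \sum_{d \in ({\bf N}I)_s} P_{M_{d,n}^\Theta(Q)}(q)\, t^d
\]
so that Theorem \ref{genfunbetti} asserts $A(t)\cdot C(t) = B(t)$ in ${\bf Q}_q[[I]]$. The plan is simply to multiply out $A\cdot C$ using the twisted multiplication $t^e\cdot t^f = q^{-\langle e,f\rangle}t^{e+f}$ and equate the coefficient of $t^d$ on both sides.

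The coefficient of $t^d$ on the left-hand side is
\[
\sum_{\substack{e+f=d \\ e,f\in({\bf N}I)_s}} q^{-\langle e,f\rangle}\, P_e(q)\, P_{M_{f,n}^\Theta(Q)}(q),
\]
while on the right-hand side it is $q^{n\cdot d}P_d(q)$. First I would check the boundary conventions: $P_0(q)=1$ (the empty decomposition in Definition \ref{admissible}), and $M_{0,n}^\Theta(Q)$ is a reduced point, so $P_{M_{0,n}^\Theta(Q)}(q)=1$. Splitting off the two extreme summands $(e,f)=(0,d)$ and $(e,f)=(d,0)$, which contribute $P_{M_{d,n}^\Theta(Q)}(q)$ and $P_d(q)$ respectively, I obtain
\[
P_{M_{d,n}^\Theta(Q)}(q) + P_d(q) + \sum_{\substack{0<e<d \\ e\in({\bf N}I)_s}} q^{-\langle e,d-e\rangle}\, P_e(q)\, P_{M_{d-e,n}^\Theta(Q)}(q) = q^{n\cdot d}\, P_d(q).
\]
Note that any intermediate $e$ with $\mu(e)=s$ automatically has $\mu(d-e)=s$ as well, so the single slope constraint on $e$ suffices.

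Solving for $P_{M_{d,n}^\Theta(Q)}(q)$ yields
\[
P_{M_{d,n}^\Theta(Q)}(q) = (q^{n\cdot d}-1)P_d(q) - \sum_{\substack{0<e<d \\ e\in({\bf N}I)_s}} q^{-\langle e,d-e\rangle}\, P_e(q)\, P_{M_{d-e,n}^\Theta(Q)}(q).
\]
Finally, I would reindex the sum via $e\mapsto d-e$ (bijection on $\{e: 0<e<d\}$), which transforms the general summand into $q^{-\langle d-e,e\rangle}P_{d-e}(q)P_{M_{e,n}^\Theta(Q)}(q)$, matching the statement of the corollary.

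There is no real obstacle here: the entire argument is a direct manipulation of coefficients, with the only subtlety being the twisted multiplication rule in ${\bf Q}_q[[I]]$ (which produces the exponent $-\langle e,d-e\rangle$) and the correct isolation of the boundary terms $e=0$ and $e=d$. Once Theorem \ref{genfunbetti} is granted, the corollary follows by a one-line calculation.
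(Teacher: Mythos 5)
Your proposal is correct and is exactly the coefficient-extraction argument the paper intends: the corollary is stated as an immediate consequence of Theorem \ref{genfunbetti}, with no written proof, and your computation (twisted multiplication, isolation of the $e=0$ and $e=d$ terms with $P_0(q)=1=P_{M_{0,n}^\Theta(Q)}(q)$, then the reindexing $e\mapsto d-e$) supplies precisely the omitted details. Nothing further is needed.
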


To derive a summation formula for the cohomology of the smooth models, we analyze the formula in Definition \ref{admissible}, Theorem \ref{Betti} for the setup $( \hat{Q}, \hat{d}, \hat{\Theta})$. We work out the admissible decompositions of $ \hat{d}$ as in the theorem. These are obviously of the form 
$$(d^1,\ldots, \hat{d^r},\ldots,d^s)$$
for some decomposition $d=d^1+\ldots+d^s$, with the exception that $d^r$ is not neccessarily non-zero. Using Lemma \ref{bphs}, the stability condition in the theorem translates into
$$\mu(d^1+\ldots+d^k)\left\{\begin{array}{ccc}>\mu(d)&\mbox{if}&k<r,\\ \geq \mu(d)&\mbox{if}&k\geq r.\end{array}\right. $$
This allows us to classify such decompositions of $ \hat{d}$ as follows: we call a decomposition $d=d^1+\ldots+d^s$ into non-zero dimension vectors semi-admissible if $$\mu(d^1+\ldots+d^k)\geq\mu(d)\mbox{ for all }k\leq s.$$
For such a decomposition, we define $k_0$ as the minimal index $k$ such that $$\mu(d^1+\ldots+d^k)=\mu(d)$$ (this being well-defined since equality of slopes holds for $k=s$), and associate to it the admissible decompositions
$$(d^1,\ldots, \hat{d^r},\ldots,d^s) \mbox{ and } (d^1,\ldots,d^{r-1}, \hat{0},d^r,\ldots,d^s)$$
for all $r\leq k_0$, respectively. This gives a parametrization of all admissible decompositions of $ \hat{d}$ by (semi-)admissible decompositions of $d$, together with an index $r\leq k_0$.\\[1ex]
For the two types of sequences above, their contribution to the sum in Definition \ref{admissible} is easily worked out as
$$(-1)^{s-1}q^{-\sum_{k<l}\langle d^l,d^k\rangle +\sum_{k<r}n\cdot d^k}\prod_k\frac{|R_{d^k}({{\bf F}_q})|}{|G_{d^k}({{\bf F}_q})|}\cdot\frac{q^{n\cdot d_r}}{q-1}\mbox{ and}$$
$$(-1)^{s}q^{-\sum_{k<l}\langle d^l,d^k\rangle}\frac{|R_{d^k}({{\bf F}_q})|}{|G_{d^k}({{\bf F}_q})|}\cdot\frac{1}{q-1},$$
respectively. This results in the following sum over all semi-admissible decompositions $d^*$ of $d$:
$$\frac{1}{q-1}\cdot\sum_{d^*}\sum_{r\leq k_0}(-1)^{s-1}q^{-\sum_{k<l}\langle d^l,d^k\rangle}\prod_k\frac{|R_{d^k}({{\bf F}_q})|}{|G_{d^k}({{\bf F}_q})|}\cdot
q^{\sum_{l<r}n\cdot d^l}(q^{n\cdot d_r}-1).$$
Applying Theorem \ref{Betti}, we get:

\begin{theorem}\label{summationformula} For all $(Q,d,n,\Theta)$ as before, we have
$$P_{M_{d,n}^\Theta(Q)}(q)=\frac{1}{q-1}\sum_{d^*}(-1)^{s-1}(q^{\sum_{k\leq k_0}n\cdot d^k}-1)\cdot q^{-\sum_{k<l}\langle d^l,d^k\rangle}\prod_k\frac{|R_{d^k}({{\bf F}_q})|}{|G_{d^k}({{\bf F}_q})|},$$
the sum running over all semi-admissible decompositions of $d$.
\end{theorem}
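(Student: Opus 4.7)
The plan is to derive this formula directly from Theorem \ref{Betti} applied to the enlarged quiver datum $(\hat Q,\hat d,\hat\Theta)$, exploiting the identification $M_{d,n}^\Theta(Q)=M_{\hat d}^{\rm st}(\hat Q)$ together with the coprimality of $\hat d$ for $\hat\Theta$ guaranteed by Lemma \ref{bphs}. Thus $P_{M_{d,n}^\Theta(Q)}(q)=(q-1)\cdot P_{\hat d}(q)$, and the task becomes unwinding the definition of $P_{\hat d}(q)$ as a sum over $\hat\mu$-admissible decompositions of $\hat d$ into one involving only data attached to $Q$.

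The first step is a careful bookkeeping of the admissible decompositions of $\hat d$. Since $\hat d_\infty=1$, exactly one summand must absorb the vertex $\infty$; consequently every admissible decomposition of $\hat d$ has the shape $(d^1,\ldots,\widehat{d^r},\ldots,d^s)$ for some decomposition $d=d^1+\ldots+d^s$ into (possibly one zero vector) dimension vectors of $Q$, with the $\widehat{\cdot}$ marking the position $r$ of the $\infty$-summand. Using parts (2) and (3) of Lemma \ref{bphs}, the admissibility inequalities $\hat\mu(\hat d^1+\ldots+\hat d^k)>\hat\mu(\hat d)$ translate into $\mu(d^1+\ldots+d^k)>\mu(d)$ for $k<r$ and $\mu(d^1+\ldots+d^k)\geq \mu(d)$ for $k\geq r$, which is precisely the semi-admissibility condition on $d^*=(d^1,\ldots,d^s)$, with the allowed positions for $r$ being exactly $r\leq k_0$ (allowing the zero-summand version accounts for $r\leq k_0$ rather than $r<k_0$).

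The second step is to compute the numerical contribution of each such decomposition. Part (1) of Lemma \ref{bphs} gives $\langle \hat d^l,d^k\rangle_{\hat Q}=\langle d^l,d^k\rangle-n\cdot d^k$ and $\langle d^l,\hat d^k\rangle_{\hat Q}=\langle d^l,d^k\rangle$, so the exponent $-\sum_{k<l}\langle \hat d^l,\hat d^k\rangle_{\hat Q}$ reduces, after collecting terms, to $-\sum_{k<l}\langle d^l,d^k\rangle+\sum_{k<r}n\cdot d^k$. For the cardinality factor, the vertex $\infty$ contributes $|R_{\hat d_r}(\hat Q)|/|G_{\hat d_r}(\hat Q)|=q^{n\cdot d^r}/(q-1)$ (and $1/(q-1)$ in the zero case), while all other factors are exactly the factors $|R_{d^k}({\bf F}_q)|/|G_{d^k}({\bf F}_q)|$. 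Combining the $r$-dependent prefactors for a fixed semi-admissible $d^*$ and telescoping,
\begin{equation*}
\sum_{r\leq k_0}q^{\sum_{l<r}n\cdot d^l}(q^{n\cdot d^r}-1)=q^{\sum_{k\leq k_0}n\cdot d^k}-1,
\end{equation*}
where the $-1$ arises from the $r=0$ (all-zero) contribution. Finally, multiplying $(q-1)\cdot P_{\hat d}(q)$ by the reciprocal $(q-1)$ cancels the $1/(q-1)$ in the formula, leaving the claimed expression.

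The main obstacle is organizational rather than conceptual: one has to be careful about sign conventions (the $(-1)^{s-1}$ versus $(-1)^s$ for sequences with and without an explicit zero summand), and about the bookkeeping of $r=0$ being effectively added as a boundary term that produces the $-1$ in $(q^{\sum n\cdot d^k}-1)$. Once the telescoping identity above is spotted, the rest is a mechanical verification using Lemma \ref{bphs}.
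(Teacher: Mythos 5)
Your proposal is correct in substance and follows exactly the paper's own route: apply Theorem \ref{Betti} to $(\hat{Q},\hat{d},\hat{\Theta})$, parametrize the admissible decompositions of $\hat{d}$ by semi-admissible decompositions of $d$ together with a position $r\leq k_0$ (with or without an inserted zero summand), compute each contribution via Lemma \ref{bphs}, and telescope over $r$. The only flaw is your closing sentence: every term of $P_{\hat{d}}(q)$ carries exactly one factor $\frac{1}{q-1}$, coming from $|{\rm GL}_1({\bf F}_q)|$ at the vertex $\infty$, and the factor $(q-1)$ supplied by Theorem \ref{Betti} cancels it, so your derivation actually produces the sum \emph{without} the prefactor $\frac{1}{q-1}$ --- you cannot simultaneously cancel that factor and retain it in the final expression. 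The prefactor in the printed statement appears to be a slip in the paper itself (for the one-vertex quiver with no arrows and $d=n=1$ the smooth model is a single point and the sum without the prefactor correctly evaluates to $1$, whereas the printed formula gives $\frac{1}{q-1}$), so the formula your computation yields is the right one; just state the result without the $\frac{1}{q-1}$.
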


\examples 
\begin{enumerate}

\item We consider the dimension vector $d$ given by $d_i=2$ and $d_j=2k$ in the case of Example D, the $m$-Kronecker quiver. The function $P_d(q)$ equals
$$\frac{1}{q(q-1)^2}\cdot(\frac{1}{q+1}\left[{2m}\atop{2k}\right]-\sum_{l=0}^{k-1}q^{(m-2k+l)l}\left[{m\atop l}\right]\left[m\atop{2k-l}\right])$$
by \cite[Section 7]{RHNS}, using the standard notation for $q$-binomial coefficients. The only relevant dimension vector $e$ on the right hand side of the formula in Corollary \ref{recursion} is $$e=d/2\mbox{ with }P_e(q)=\left[{m\atop {k}}\right],$$
again by \cite{RHNS}. After some computation, we arrive at the following formula for the Poincar\'e polynomial of the smooth model:
$$\frac{q^{n_1+kn_2}-1}{q(q-1)^2}\cdot(\frac{q^{n_1+kn_2}+1}{q+1}\left[{{2m}\atop{2k}}\right]-(q^{n_1+kn_2}+1)\times$$
$$\times \sum_{l=0}^{k-1}q^{(m-2k+l)l}\left[{m\atop l}\right]\left[{m\atop{2k-l}}\right]-q^{(m-k)k}\left[{m\atop k}\right]^2).$$

\item In the case of Example F with $r$ even, the function $P_d(q)$ equals
$$\frac{1}{q(q-1)^2}((q+1)^{r-1}-\sum_{l=0}^{r/2-1}{r\choose l}q^l),$$
and the Poincar\'e polynomial of the smooth model equals
$$\frac{1}{q(q-1)}((q+1)^r-(q+1)\sum_{l=0}^{r/2-1}{r\choose l}q^l-q^{r/2}{r\choose{r/2}}).$$

\end{enumerate}

\section{Hilbert schemes of path algebras}\label{hilbertpath}

From now on, we consider the special case $\Theta=0$, and set $${\rm H}_{d,n}(Q):=M_{d,n}^0(Q)$$ in this case. From the general theory, we have a projective morphism $${\rm H}_{d,n}(Q)\rightarrow M_d^{\rm ssimp}(Q)$$ to the affine moduli of semisimple representations. The points of ${\rm H}_{d,n}(Q)$ correspond to pairs $(M,f)$ consisting of an arbitrary representations $M$, together with a map $f:V\rightarrow M$ whose image generates the representation $M$. Equivalently, the points correspond to pairs consisting of a representation $M$, together with a surjection $P^{(n)}\rightarrow M$. In analogy to coherent sheaves, ${\rm H}_{d,n}(Q)$ can thus be viewed as a Hilbert scheme. In the special case $n_i=1$ for all $i\in I$, we have a surjection $kQ\rightarrow M$, thus ${\rm H}_{d,1}(Q)$ parametrizes left ideals $I$ in $kQ$ such that $\dimv kQ/I=d$.\\[2ex]
We will first derive an effective criterion for non-emptyness of ${\rm H}_{d,n}(Q)$ in terms of the Euler form of $Q$ and the support ${\rm supp}(d)$ of the dimension vector, which is defined as the full subquiver of $Q$ supported on vertices $i\in I$ such that $d_i\not=0$.

\begin{theorem}\label{criterionhilb} ${\rm H}_{d,n}(Q)$ is non-empty if and only if the following conditions hold:
\begin{enumerate}
\item $n_i-\langle d,i\rangle\geq 0$ for all $i\in I$,
\item for any $i\in I$ such that $d_i\not=0$, there exists a vertex $j$ such that $n_j\not=0$, and such that there exists a path in ${\rm supp}(d)$ from $j$ to $i$.
\end{enumerate}
\end{theorem}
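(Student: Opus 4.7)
For necessity, I would assume that $(M,f)$ represents a point of $\mathrm{H}_{d,n}(Q)$, so that $\mathrm{Im}(f)$ generates $M$ as a $Q$-representation, and extract two constraints. First, for each vertex $i$, consider the linear map
\[
\phi_i \colon V_i \oplus \bigoplus_{\alpha\colon j\to i} M_j \;\longrightarrow\; M_i, \qquad (v,(m_\alpha)) \longmapsto f_i(v) + \sum_\alpha M_\alpha(m_\alpha).
\]
If $\phi_i$ failed to be surjective, its image together with the spaces $M_j$ for $j\neq i$ would assemble into a proper subrepresentation of $M$ containing $\mathrm{Im}(f)$, contradicting generation; so $\phi_i$ is surjective and a dimension count yields $n_i\geq d_i-\sum_{\alpha\colon j\to i}d_j=\langle d,i\rangle$, i.e.\ condition~1. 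Second, write the generated subrepresentation as an ascending union $N=\bigcup_k N^{(k)}$ with $N^{(0)}=0$ and $N^{(k+1)}_i=f_i(V_i)+\sum_{\alpha\colon j\to i}M_\alpha(N^{(k)}_j)$. Since $N=M$, for any $i\in\mathrm{supp}(d)$ some nonzero element of $M_i$ appears at a finite stage, and tracing the iteration backwards produces an expression of the form $M_\gamma(f_j(v))$, where $\gamma$ is a path from $j$ to $i$. Nontriviality forces $n_j\neq 0$ and all vertices traversed by $\gamma$ to have nonzero $d$-component, which is condition~2.

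For sufficiency, the plan is to construct $(M,f)\in\mathrm{H}_{d,n}(Q)$ by induction on $|d|=\sum_i d_i$; the base case $d=0$ is witnessed by the trivial pair. For the inductive step, I would select a vertex $i_0\in\mathrm{supp}(d)$ and a reduced framing $n'\in\{n,n-i_0\}$ such that $(d-i_0,n')$ still obeys conditions~1 and~2, invoke the induction hypothesis to obtain $(M',f')\in\mathrm{H}_{d-i_0,n'}(Q)$, and then enlarge it by appending a single basis vector $x_{i_0}$ at vertex $i_0$. Two constructions are natural:
\begin{itemize}
\item[(a)] if $n_{i_0}>0$, take $n'=n-i_0$ and form $M:=M'\oplus S_{i_0}$ as representations, extending $f'$ by mapping one of the extra framing directions at $i_0$ onto $x_{i_0}$;
\item[(b)] if $n_{i_0}=0$, take $n'=n$ and use condition~2 to pick an arrow $\alpha\colon j\to i_0$ with $j\in\mathrm{supp}(d-i_0)$. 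Add a new dimension at $i_0$ to $M'$, leave all other structural maps intact, and deform $M_\alpha$ to have a nonzero component into $x_{i_0}$, forcing $x_{i_0}$ into the subrepresentation generated by $\mathrm{Im}(f)$.
\end{itemize}
A direct verification then shows that $(M,f)$ is generating.

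The crux is producing such a vertex $i_0$. Condition~1 at a vertex $k\neq i_0$ becomes more restrictive after reduction, since $\langle d-i_0,k\rangle=\langle d,k\rangle+|\{\alpha\colon i_0\to k\}|$; condition~2 fails if $i_0$ is the unique gateway from $\mathrm{supp}(n)$ to some remaining vertex of $\mathrm{supp}(d)$. The strategy is to take $i_0$ as the endpoint of a longest path in $\mathrm{supp}(d)$ emanating from $\mathrm{supp}(n)\cap\mathrm{supp}(d)$: such a vertex has no outgoing arrows inside $\mathrm{supp}(d-i_0)$ that would lead to a strictly longer path, so the newly restrictive instances of condition~1 concern only vertices outside $\mathrm{supp}(d-i_0)$, where the inequality is automatic; and its removal cannot disconnect any remaining vertex from $\mathrm{supp}(n)$. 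The presence of oriented cycles in $\mathrm{supp}(d)$ is handled by a separate case, choosing $i_0$ on the cycle and, if necessary, adjusting $n'$ so that condition~2 is reestablished from the surviving portion of $\mathrm{supp}(n)\cap\mathrm{supp}(d-i_0)$. A finite case analysis, organized according to the intersection pattern of $\mathrm{supp}(n)$ and $\mathrm{supp}(d)$ and the internal structure of $\mathrm{supp}(d)$, completes the induction.
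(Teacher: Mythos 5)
Your necessity argument is correct and is essentially the paper's: surjectivity of $\phi_i$ gives condition 1, and expressing the subrepresentation generated by ${\rm Im}(f)$ as an ascending union gives condition 2. The sufficiency direction, however, has a genuine gap, and it sits exactly where the theorem is hardest: when ${\rm supp}(d)$ contains oriented cycles. Your induction rests on the dichotomy (a) $n_{i_0}>0$, set $n'=n-i_0$, versus (b) $n_{i_0}=0$, set $n'=n$. Already for the one-loop quiver with $d=2$, $n=1$ (where ${\rm H}_{2,1}(Q)$ is certainly non-empty: take $M_\alpha$ a regular nilpotent and $f$ a cyclic vector) the only possible $i_0$ is the unique vertex; case (a) produces $(d',n')=(1,0)$, which violates condition 2, and case (b) does not apply since $n_{i_0}=1$. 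The correct move there is to keep $n'=n$ even though $n_{i_0}>0$ and let the loop generate the new dimension, which your dichotomy forbids. So the cyclic case is not a ``separate case'' to be dispatched by a finite check: longest paths need not exist, your choice of $i_0$ is unavailable, and the interaction between the two conditions under reduction is precisely the content that must be supplied. A secondary issue: in case (b), ``a direct verification then shows that $(M,f)$ is generating'' hides a real step. The generated subrepresentation $N$ surjects onto $M'$ under the quotient by $\langle x_{i_0}\rangle$, so $N\neq M$ exactly when $N$ is a splitting complement of $S_{i_0}$; you must therefore argue that the rank-one deformation of $M_\alpha$ can be chosen so that this does not happen, which requires either a genericity argument or an explicit choice of the deforming functional, not just inspection.

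For contrast, the paper proves sufficiency without constructing any representation: it applies the recursive non-emptiness criterion (Theorem \ref{criterion}) to $(\hat{Q},\hat{d},\hat{\Theta})$, notes that the only admissible destabilizing decompositions have the form $\hat{d}=\hat{e}+f$ with $f\neq 0$, and shows that the orthogonality condition $\langle\hat{e},f\rangle=0$ forces $n_i=\langle e,i\rangle$ or $f_i=0$ for every $i$; combined with condition 1 this produces an oriented cycle inside $\{i:e_i<d_i\}$ along which $n$ vanishes, and condition 2 then yields a contradiction. If you want to keep your constructive approach, you would need to (i) enlarge the case analysis so that $n'=n$ is allowed even when $n_{i_0}>0$, with the extra dimension at $i_0$ generated through an arrow (possibly a loop) from ${\rm supp}(d)$, and (ii) prove that a suitable $i_0$ exists when ${\rm supp}(d)$ has oriented cycles --- at which point you will find yourself reproving something very close to the paper's cycle argument.
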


\proof Assume that ${\rm H}_{d,n}(Q)$ contains a point $(M,f)$, where $f$ is viewed as a map $f:V\rightarrow M$ whose image generates the representation $M$. Then it is obviously necessary that $$M_i={\rm Im}(f_i)+\sum_{\alpha:j\rightarrow i}M_\alpha(M_j),$$
yielding the dimension estimate
$$\dim M_i\leq n_i+\sum_{\alpha:j\rightarrow i}\dim M_j,$$
or, equivalenty, $n_i\geq\langle d,i\rangle$. Now assume that $d_i\not=0$, but $n_j=0$ for any vertex $j\in I$ admitting a path from $j$ to $i$ in $Q$ (in particular, for $j=i$). Then the subrepresentation $U$ generated by the image of $f$ obviously fulfills $U_i=0$, a contradiction.\\[1ex]
To prove the converse, we want to apply the criterion Theorem \ref{criterion} for non-emptyness of moduli. Thus, we consider decompositions of $ \hat{d}$ as in Theorem \ref{criterion}, neccessarily of the form
$$ \hat{d}=d^1+\ldots+d^{k-1}+ \hat{d^k}+d^{k+1}+\ldots+d^s,$$
and of strictly descending slope with respect to the slope function $ \hat{\mu}$. But we have $$ \hat{\mu}(e)=0\mbox{ and } \hat{\mu}( \hat{e})>0\mbox{ for all }e\not=0,$$
thus the only possible such decompositions are of the form $ \hat{d}= \hat{e}+f$ for $e+f=d$ and $f\not=0$. Additionally, we have ${\rm H}_{e,n}(Q)\not=0$, thus $n_i\geq\langle e,i\rangle$ for all $i\in I$ by what is already proved, and we have
$$0=\langle  \hat{e},f\rangle=\langle e,f\rangle-n\cdot f=\sum_{i\in I}\underbrace{(\langle e,i\rangle-n_i)}_{\leq 0}f_i,$$
or, equivalently:
\begin{center}$n_i=\langle e,i\rangle$ or $f_i=0$ for all $i\in I$.\end{center}

Denote by $J\subset I$ the set of all vertices $i$ for which $e_i<d_i$, which is non-empty since $f\not=0$. For any $i\in J$, we have
$$n_i+\sum_{j\rightarrow i}e_j=e_i<d_i\leq n_i+\sum_{j\rightarrow i}d_j,$$
thus there exists some arrow $j\rightarrow i$ such that $j\in J$. Iterated application of this argument constructs (by finiteness of $Q$) a cycle $$i=i_s\rightarrow i_{n-1}\rightarrow\ldots\rightarrow i_1\rightarrow i_0=i$$ in $J$, yielding an estimate
$$e_i\geq n_i+e_{i_1}\geq n_i+n_{i_1}+e_{i_2}\geq\ldots\geq n_i+\ldots+n_{i_s}+e_i.$$
From this we can conclude $n_i=0$ and thus $e_i=\sum_{j\rightarrow i}e_j$. Using again the above cycle, we have an estimate
$$e_i=e_{i_1}+\sum_{i_1\not=j\rightarrow i}e_j\geq e_{i_2}+\sum_{i_2\not=j\rightarrow i_1}e_j+\sum_{i_1\not=j\rightarrow i}e_j\geq
\ldots\geq e_i+\sum_k\sum_{i_{k+1}\not=j\rightarrow i_k}e_j,$$
and thus $e_j=0$ for any vertex $j$ admitting an arrow $\alpha:j\rightarrow i_k$ pointing into the cycle.\\[1ex]
We have $d_i\not=0$, thus by assumption, there exists a vertex $j$ such that $n_j\not=0$ (and thus $j\not\in J$) and a path from $j$ to $i$ in the support of $d$. we conclude that there exists an arrow $\alpha:j\rightarrow l$ in this path such that $j\not\in J$ and $l\in J$. By the above conclusion, we infer $0=e_j=d_j$, a contradiction.\hb

\remark That the second condition in the theorem is necessary (in contrast to the case of quivers without oriented cycles, see \cite{RF}) can be seen for instance in the following example: assume $Q$ consists of two vertices $i,j$, a loop at $i$, and an arrow from $i$ to $j$. Define $n_i=0$ and $n_j=1$ as well as $d_i=1$ and $d_j=0$. Then the first condition of the theorem is satisfied, whereas obviously the moduli space is empty.\\[2ex]
We will derive a positive formula for the Betti numbers of ${\rm H}_{d,n}(Q)$ from Theorem \ref{genfunbetti} by a purely formal argument. The cell decomposition constructed in the next section will provide a more conceptual positive formula; the final section of this paper will show how these two positive formulas are related.\\[1ex]
Given a quiver $Q$ and a dimension vector $d$, we consider the set of multipartitions $\lambda\in\Lambda_d$, by which we mean a tuple $(\lambda^i)_{i\in I}$ of partitions $$\lambda^i:(\lambda^i_1\geq\ldots\geq\lambda^i_{d_i}\geq 0)$$ consisting of non-negative integers. We formally define $\lambda^i_0$ as some large integer (large enough for none of the finitely many conditions $\lambda^i_0<C$ appearing below to be fulfilled). We define the weight $|\lambda|$ of a multipartition by $$|\lambda|=\sum_{i\in I}\sum_{k=1}^{d_i}\lambda^i_k.$$
Furthermore, we denote by $S_{d,n}$ the subset of $\Lambda_d$ consisting of multipartitions with the following property:
\begin{center} for any $0\leq e<d$, there exists a vertex $i\in I$ such that $\lambda^i_{d_i-e_i}<n_i-\langle e,i\rangle$.
\end{center}

\begin{theorem}\label{Thm:Poincare} For any $(Q,d,n)$ as before, we have $$P_{{\rm H}_{d,n}(Q)}(q)=q^{n\cdot d-\langle d,d\rangle}\sum_{\lambda\in S_{d,n}}q^{-|\lambda|}.$$
\end{theorem}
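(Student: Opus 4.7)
The plan is to derive the formula from the recursion of Corollary \ref{recursion} by combining it with a combinatorial identity on multipartition generating functions. Since $\Theta = 0$, every non-zero dimension vector has slope zero, so the only $\mu$-admissible decomposition of any $d \in {\bf N}I$ is the trivial one $(d)$. Hence
$$P_d(q) = \frac{|R_d(Q)({\bf F}_q)|}{|G_d({\bf F}_q)|} = \frac{q^{-\langle d, d\rangle}}{F_d(q)}, \quad \text{where } F_d(q) := \prod_{i \in I}\prod_{k=1}^{d_i}(1 - q^{-k}).$$
Applying the classical Euler identity $\prod_{k=1}^N (1 - q^{-k})^{-1} = \sum_{\lambda:\ \ell(\lambda)\leq N} q^{-|\lambda|}$ coordinate-wise, we get $A_d(q) := 1/F_d(q) = \sum_{\lambda \in \Lambda_d} q^{-|\lambda|}$ and hence $P_d(q) = q^{-\langle d,d\rangle}A_d(q)$. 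The base case $d = 0$ of the theorem is then trivial.

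The plan is to proceed by induction on $d$. Substituting the conjectured formula $P_{{\rm H}_{e,n}(Q)}(q) = q^{n\cdot e - \langle e,e\rangle}B_{e,n}(q)$ (with $B_{e,n}(q) := \sum_{\lambda \in S_{e,n}} q^{-|\lambda|}$) for $e < d$ and the above formula for $P_d(q)$ into Corollary \ref{recursion}, and expanding $\langle d,d\rangle = \langle d{-}e,d{-}e\rangle + \langle d{-}e,e\rangle + \langle e,d{-}e\rangle + \langle e,e\rangle$, the inductive step becomes equivalent to the combinatorial identity
\[
A_d(q) \;=\; \sum_{0 \leq e \leq d} q^{-n\cdot e + \langle e, d-e\rangle}\,A_e(q)\,B_{d-e,n}(q),
\]
or, by solving for the $e = 0$ term, to the inclusion-exclusion
\[
\sum_{\lambda \in \Lambda_d\setminus S_{d,n}} q^{-|\lambda|} \;=\; \sum_{0 < e \leq d} q^{-n\cdot e + \langle e, d-e\rangle}\,A_e(q)\,B_{d-e,n}(q).
\]

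The plan for proving this identity is to exhibit a canonical weight-preserving bijection
$\Lambda_d \longleftrightarrow \bigsqcup_{0 \leq e \leq d} \Lambda_e \times S_{d-e,n}$.
Given $\lambda \in \Lambda_d$, one defines a canonical ``maximally forbidden'' dimension vector $e(\lambda)$ in the set $E(\lambda) := \{e \leq d : \lambda^i_{d_i - e_i} \geq n_i - \langle e, i\rangle \text{ for all } i\}$ (noting that $d \in E(\lambda)$ always by the convention on $\lambda^i_0$). One then splits $\lambda$ coordinate-wise at positions $e_i(\lambda)$: the top $e_i(\lambda)$ parts of $\lambda^i$, with suitable shifts, form a multipartition $\mu \in \Lambda_{e(\lambda)}$, while the bottom $d_i - e_i(\lambda)$ parts form a multipartition $\nu$, which lies in $S_{d - e(\lambda), n}$ precisely because of the maximality of $e(\lambda)$. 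In the single-vertex case (Example A), $E(\lambda)$ is totally ordered by partition monotonicity, $e(\lambda)$ is manifestly its maximum, and a constant shift per vertex suffices to match weights, using the symmetry of the Euler form in this case.

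The main obstacle is extending this construction to the general quiver case, where the Euler form $\langle\cdot,\cdot\rangle$ is asymmetric and couples distinct vertices: the set $E(\lambda)$ is not closed under componentwise maximum, so $e(\lambda)$ must be defined via a monotone iteration on the finite lattice $\{e : 0 \leq e \leq d\}$, applying a Knaster--Tarski-type fixed-point argument. More subtly, since $\langle e, d-e\rangle \neq \langle d-e, e\rangle$ in general, the weight-tracking forces the shifts appearing in the decomposition to incorporate arrow-mediated contributions between different vertices rather than to reduce to a constant shift per vertex; the verification relies on the basic identity $\sum_i e_i\langle f,i\rangle = \langle f,e\rangle$ and the full bilinearity of the Euler form to reconcile the two possible orderings of arguments. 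An alternative route that would bypass the bijective construction is to verify the equivalent generating function identity directly in the twisted algebra ${\bf Q}_q[[I]]$ by Hall-algebra-style manipulations in the spirit of Lemma \ref{keyidentity}.
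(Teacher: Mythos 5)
Your overall strategy coincides with the paper's: reduce the theorem, via Corollary \ref{recursion} (equivalently the coefficient extraction from Theorem \ref{genfunbetti}) and the Euler identity $|R_d({\bf F}_q)|/|G_d({\bf F}_q)|=q^{-\langle d,d\rangle}\sum_{\lambda\in\Lambda_d}q^{-|\lambda|}$, to a combinatorial identity, and prove that identity by induction on $d$ together with a weight-preserving bijection $\Lambda_d\leftrightarrow\coprod_{e+f=d}\Lambda_f\times S_{e,n}$ obtained by splitting each multipartition at a canonical dimension vector. (Your proposed "alternative route" via Hall-algebra manipulations is circular: Theorem \ref{genfunbetti} is already that input, and the combinatorial identity is exactly what remains.) However, two steps as written would fail. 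First, the target identity has the Euler form transposed: carrying out the substitution, the coefficient of $A_{d-e}(q)B_{e,n}(q)$, with $e$ indexing the $S$-factor, is $q^{-n\cdot(d-e)+\langle e,d-e\rangle}$; in your labelling the exponent should be $\langle d-e,e\rangle-n\cdot e$, not $\langle e,d-e\rangle-n\cdot e$. With the correct order, the "main obstacle" you describe disappears: shifting each of the $d_i-e_i$ free parts of $\lambda^i$ by the constant $n_i-\langle e,i\rangle$ accounts for exactly $n\cdot(d-e)-\langle e,d-e\rangle$, because $\sum_i(d_i-e_i)\langle e,i\rangle=\langle e,d-e\rangle$; no arrow-mediated corrections to the shifts are needed.

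Second, and more seriously, your choice of splitting point is degenerate. Since $d\in E(\lambda)$ always, a \emph{maximal} element of $E(\lambda)$ is always $d$ itself, so the proposed splitting is trivial for every $\lambda$ and cannot be a bijection onto the disjoint union. The correct choice is the \emph{minimal} element $e$ of $E(\lambda)$: the bottom $e_i$ parts of each $\lambda^i$ form the constituent in $S_{e,n}$ (minimality of $e$ is precisely what forces the $S_{e,n}$-condition), while the top $d_i-e_i$ parts, shifted by $n_i-\langle e,i\rangle$, form the free constituent in $\Lambda_{d-e}$ (the condition $e\in E(\lambda)$ is what makes these shifted parts non-negative, and $n_i-\langle e,i\rangle\geq 0$ holds by Theorem \ref{criterionhilb} and induction). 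Finally, no Knaster--Tarski iteration is needed to make the choice canonical: $E(\lambda)$ is closed under componentwise minimum. Indeed, if $g,g'\in E(\lambda)$ and $\min(g,g')_i=g_i$, then $\langle\min(g,g'),i\rangle-\langle g,i\rangle=-\sum_{\alpha:j\to i}(\min(g,g')_j-g_j)\geq 0$, so $\lambda^i_{d_i-\min(g,g')_i}=\lambda^i_{d_i-g_i}\geq n_i-\langle g,i\rangle\geq n_i-\langle\min(g,g'),i\rangle$; hence $E(\lambda)$ has a unique minimum and the map is well defined.
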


\proof Since $\Theta=0$, any representation is semistable, thus $R_d^{\rm sst}(Q)=R_d(Q)$, and Theorem \ref{genfunbetti} reads
$$\sum_f\frac{|R_f(Q)({\bf F}_q)|}{|G_f({\bf F}_q)|}t^f\cdot\sum_eP_{{\rm H}_{e,n}(Q)}(q)t^e=\sum_d q^{n\cdot d}\frac{|R_d(Q)({\bf F}_q)|}{|G_d({\bf F}_q)|}t^d.$$
Multiplying on the left hand side and comparing coefficients of the various $t^d$, this reads
$$\sum_{e+f=d}q^{-\langle f,e\rangle}\frac{|R_f(Q)({\bf F}_q)|}{|G_f({\bf F}_q)|}P_{{\rm H}_{e,n}(Q)}(q)=q^{n\cdot d}\frac{|R_d(Q)({\bf F}_q)|}{|G_d({\bf F}_q)|}$$
for any $d\in{\bf N}I$.\\[1ex]
We will rewrite the fractions appearing on both sides as power series in the variable $q^{-1}$. To do this, we first note that
$$\frac{1}{(1-t)(1-t^2)\ldots(1-t^n)}=\sum_{\lambda}t^{|\lambda|},$$
the sum running over all partitions $\lambda:(\lambda_1\geq\ldots\geq \lambda_n\geq 0)$. This then gives us
$$\frac{|R_d(Q)({\bf F}_q)|}{|G_d({\bf F}_q)|}=\frac{q^{\sum_{\alpha:i\rightarrow j}d_id_j}}{\prod_{i\in I}|{\rm GL}_{d_i}({\bf F}_q)|}=\frac{q^{-\langle d,d\rangle}}{\prod_{i\in I}q^{-d_i^2}|{\rm GL}_{d_i}({\bf F}_q)|}$$
$$=q^{-\langle d,d\rangle}\prod_{i\in I}\frac{1}{(1-q^{-1})\cdot\ldots\cdot(1-q^{-d_i})}=q^{-\langle d,d\rangle}\sum_{\lambda\in\Lambda_d}q^{-|\lambda|}.$$
Substituting this identity in the above formula and multiplying both sides by $q^{\langle d,d\rangle-n\cdot d}$ yields
$$\sum_{e+f=d}q^{\langle e,f\rangle-n\cdot f}\sum_{\rho\in\Lambda_f}q^{-|\rho|}q^{\langle e,e\rangle-n\cdot e}P_{{\rm H}_{e,n}(Q)}(q)=\sum_{\lambda\in\Lambda_d}q^{-|\lambda|}.$$
Substituting the claimed formula for ${\rm H}_{e,n}(Q)$, we arrive at the identity
$$\sum_{e+f=d}\sum_{\rho\in\Lambda_f}\sum_{\nu\in S_{e,n}}q^{-|\rho|-(n\cdot f-\langle e,f\rangle)-|\nu|}=\sum_{\lambda\in\Lambda_d}q^{-|\lambda|}.$$

We prove this identity by induction over $d$, constructing a bijective map $$\Lambda_d\rightarrow \coprod_{e+f=d}\Lambda_f\times S_{e,n}$$
in such a way that weights of multipartitions correspond as in the claimed identity (the induction starts with $d=0$, for which $\Lambda_0=S_{0,n}$ consists of a single element of weight zero, namely an $I$-tuple of empty partitions). Suppose we are given a multipartition $\lambda\in\Lambda_d$. Consider the set of dimension vectors $g\leq d$ such that
$$\lambda_{d_i-g_i}^i\geq n_i-\langle g,i\rangle\mbox{ for all }i\in I,$$
which is non-empty since $g=d$ belongs to it by definition. Let $e$ be a minimal element in this set, and define
$f=d-e$. We then construct $\rho\in\Lambda_f$ by $$\rho^i_k=\lambda^i_k-(n_i-\langle e,i\rangle),$$
and $\nu\in\Lambda_e$ by
$$\nu^i_{k}=\lambda^i_{k-(d_i-e_i)}.$$
We claim that $\nu$ in fact belongs to $S_{e,n}$: otherwise, there exists a dimension vector $e'<e$ such that for all $i\in I$, we have
$$n_i-\langle e,i\rangle\leq \nu^i_{e_i-e_i'}=\lambda^i_{d_i-e_i'},$$
contradicting minimality of $e$.\\[1ex]
To construct a converse map, suppose we are given a pair $(\rho,\nu)$ as above. Thus $S_{e,n}\not=0$, and by the inductive assumption, the claimed formula already holds for $e$, resulting in ${\rm H}_{e,n}(Q)\not=\emptyset$. By Theorem \ref{criterionhilb}, we thus have  $n_i-\langle e,i\rangle\geq 0$ for any $i\in I$. This allows us to add $n_i-\langle e,i\rangle$ to any constituent of $\rho^i$, yielding a multipartition $ \hat{\rho}$. We define $\lambda$ as the concatenation of $ \hat{\rho}$ and $\nu$, for which we have to assure that $\nu^i_k\leq n_i-\langle e,i\rangle$ for any $i\in I$ and any $k$. To see this, we consider the dimension vector $e'=e-i$ for some vertex $i$. Working out the defining condition of $S_{e,n}$ in this special case, we get
$$\nu^i_1<n_i-\langle e,i\rangle+\langle i,i\rangle\leq n_i-\langle e,i\rangle,$$
which gives the desired estimate.\hb

\remark In the case of the $m$-loop quiver, this result was obtained in \cite[Section 5]{RNCH}.

\section{Cell decomposition of \texorpdfstring{$\Hilb_{d,n}\left(Q\right)$}{Hilb[d,n](Q)}}\label{sec:NCHilbert}
%\section{Cell decomposition of $\Hilb_{d,n}\left(Q\right)$}\label{sec:NCHilbert}
Recall the definition of the non-commutative Hilbert scheme from Section \ref{hilbertpath}.
Our aim is to give a cell decomposition of $\Hilb_{d,n}\left(Q\right)$, i.e. a filtration by closed subvarieties
\begin{equation*}
	\Hilb_{d,n}\left(Q\right)=X_0\stackrel{\text{closed}}{\supseteq} X_1\stackrel{\text{closed}}{\supseteq} X_2
	\stackrel{\text{closed}}{\supseteq}\ldots\stackrel{\text{closed}}{\supseteq}X_s=\emptyset
\end{equation*}
such that $X_i\ohne X_{i+1}$ is isomorphic to a disjoint union of affine spaces for $i=0,\ldots,s-1$. If such a decomposition exists the number of affine pieces of dimension $n$ is known to be the Betti number $\dim H^{2n}\left(\Hilb_{d,n}\left(Q\right)\right)$, and we know $\dim H^{2n+1}\left(\Hilb_{d,n}\left(Q\right)\right)=0$ for all $n$.
In a first step we will define certain affine subsets $U_{S_*}$ of $\Hilb_{d,n}\left(Q\right)$ such that $\Hilb_{d,n}\left(Q\right)=\bigcup_{S_*}U_{S_*}$. In the next step we will make this union disjoint by reducing the $U_{S_*}$ to smaller spaces $Z_{S_*}\subseteq U_{S_*}$ still covering $\Hilb_{d,n}\left(Q\right)$. The index set from which the $S_*$ are taken is a set of certain forests which we will describe later.

Let $Q$ be a quiver, $I$ its set of vertices, $d,n\in\Natural I$ dimension vectors. For each vertex $q\in I$ define a tree $T_q$ as follows: The vertices of $T_q$ are paths in $Q$, starting in $q$. Edges in $T_q$ are of the form $\left(\alpha_1\cdots\alpha_m\right)\to\left(\alpha_1\cdots\alpha_m\alpha_{m+1}\right)$, i.\,e. prolongations of paths in $Q$ by exactly one arrow. It is clear that the empty path in $q$ is the uniquely determined source of $T_q$. A subtree $S_q\subseteq T_q$ of a tree $T_q$ as above is a subset $S_q$ of $T_q$ which is closed under predecessors. We denote by a forest a tuple of trees, a subforest $\mathcal{F}\subseteq\mathcal{G}$ is a forest $\mathcal{F}$ whose trees are subtrees of the trees of $\mathcal{G}$.\\
Assign to $Q$ with dimension vector $n$ the forest $\mathcal{F}_n\left(Q\right)\coloneq\left(T_{q,1},\ldots,T_{q,n_q}\right)_{q\in I}$ consisting of $n_q$ copies of the tree $T_q$ constructed above for each $q\in I$. The elements of $\mathcal{F}_n\left(Q\right)$ can be labelled in the form $\left(q,i,w\right)$ which means the path $w$ in the $i$-th copy of $T_q$ which will be denoted by $T_{q,i}$. A vertex $\left(q,i,w\right)\in\mathcal{F}_n\left(Q\right)$ with $w=\alpha_1\cdots\alpha_m$ is called a $j$-vertex for a $j\in I$ if $\alpha_m$ points towards $j$ in $Q$; we write $t\left(q,i,w\right)=t\left(w\right)=j$.
\begin{examp}
	Take for example the quiver
	\begin{center}
%		\begin{pspicture}(6,1.5)
%			\psset{nodesep=3pt}
%			\rput(0.5,0.75){$Q:$}
%			\rput(1.5,0.75){\rnode{1}{$1$}}
%			\rput(3.5,0.75){\rnode{2}{$2$}}
%			\rput(5.5,0.75){\rnode{3}{$3$}}
%			\ncline{->}{2}{1}\naput{$\alpha$}
%			\ncarc[arcangle=20]{->}{2}{3}\naput{$\beta$}
%			\ncarc[arcangle=20]{->}{3}{2}\naput{$\gamma$}
%		\end{pspicture}
		\includegraphics{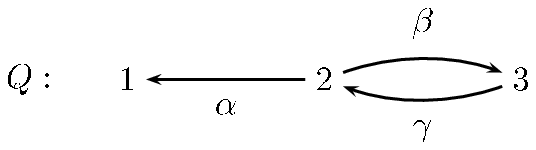}
	\end{center}
	with dimension vector $n=\left(1,1,2\right)$.
	Then we have $T_1=\emptyset$ since no arrows start in vertex 1. $T_2$ has the following form
	\begin{center}
%		\begin{pspicture}(6,5)
%			\psset{nodesep=3pt}
%			\rput(1.5,4.5){\rnode{0}{$()$}}
%			\rput(0.5,3.5){\rnode{a}{$\alpha$}}
%			\rput(2.5,3.5){\rnode{b}{$\beta$}}
%			\rput(3.5,2.5){\rnode{bc}{$\gamma\beta$}}
%			\rput(4.5,1.5){\rnode{bcb}{$\beta\gamma\beta$}}
%			\rput(5.5,0.5){\rnode{cbcb}{}}
%			\ncline{->}{0}{a}
%			\ncline{->}{0}{b}
%			\ncline{->}{b}{bc}
%			\ncline{->}{bc}{bcb}
%			\ncline[linestyle=dotted]{->}{bcb}{cbcb}
%		\end{pspicture}
		\includegraphics{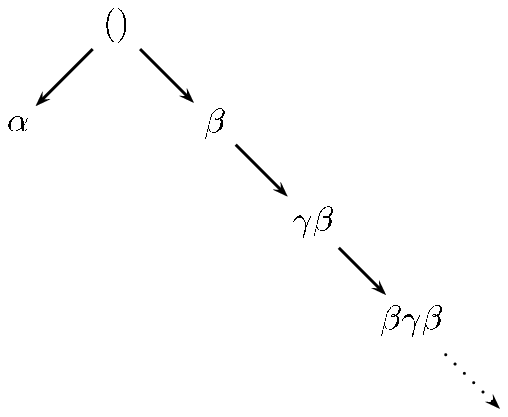}
	\end{center}
	with one infinite line, whereas $T_3$ is a infinite line with the words $()$, $\gamma$, $\beta\gamma$, $\gamma\beta\gamma$,\dots.\\
	In this case $\mathcal F_n\left(Q\right)$ consists of one copy of $T_1$ (which is the empty tree), one copy of $T_2$ and two copies of $T_3$.
\end{examp}

\begin{definition}\label{Def:Order}
	The vertices in each tree of $\mathcal{F}_n\left(Q\right)$ are totally ordered in the lexicographical order:
	Choose an arbitrary total order on $I$ and for each pair $\left(i,j\right)$ of vertices a partial order on the arrows $i\to j$ in $Q_1$, that is enumerate parallel arrows. This gives an order on $Q_1$: Given two arrows $\alpha\colon i\to j$ and $\beta\colon k\to\ell$ set $\alpha<\beta$ if one of the following conditions holds:
	\begin{enumerate}
		\item $i=k$ and $j=\ell$ and $\alpha<\beta$ in the enumeration chosen above,
		\item $i=k$ and $j<\ell$,
		\item $i<k$.
	\end{enumerate}
	Now we can construct the lexicographical order on trees as follows:
	Take two paths $w=\left(\alpha_1\cdots\alpha_r\right)$ and $w'=\left(\beta_1\cdots\beta_s\right)$ and let $k$ be minimal with $\alpha_k\not=\beta_k$; set $w<w'$ if $\alpha_k<\beta_k$. If no such $k$ exists set $w<w'$ if $r<s$.\\
	Using that we can define a total order on the vertices in $\mathcal{F}_n\left(Q\right)$: Let $\left(q,i,w\right)<\left(q',i',w'\right)$ if one of the following holds:
	\begin{enumerate}
		\item $q<q'$ in the total order of $I$,
		\item $q=q'$ and $i<i'$,
		\item $q=q'$ and $i=i'$ and $w<w'$ in the lexicographical order mentioned above.
	\end{enumerate}
	For trees $S=\left\{w_1<\ldots<w_j\right\}$ and $S'=\left\{w'_1<\ldots<w'_k\right\}$ set $S_*<S'_*$ if $\left|S_*\right|>\left|S'_*\right|$ or $w_\ell<w'_\ell$ in the order of the paths defined above where $\ell$ is minimal with $w_\ell\not=w'_\ell$. The same definition also gives an order on the forests.
\end{definition}
Since every tree $S_{q,i}$ in a subforest $F$ of $\mathcal{F}_n\left(Q\right)$ has a unique successor in $F$ as showed above denote the index of this tree by  $\succ\left(q,i\right)$.

In the following we will concentrate on subforests of $\mathcal{F}_n\left(Q\right)$.

\begin{definition}\label{Def:US}
	For $q\in I$ let $V_q$ be a $\Complex$-vector space of dimension $n_q$ with basis $\left(v_{q,i}\right)_i$. For some quiver datum $\left(Q,d,n\right)$ and a subforest $S_*$ of $\mathcal{F}_n\left(Q\right)$ let $U_{S_*}$ be the set of equivalence classes of tuples $\overline{\left(\left(M_\alpha\right)_{\alpha\in Q_1},\left(f_q\right)_{q\in I}\right)}\in\Hilb_{d,n}\left(Q\right)$ such that $\left(M_w\circ f_q\left(v_{q,i}\right)\right)_{\left(q,i,w\right)\in S_*}$ form bases of all $M_q$ for $q\in I$. Here $M_w$ stands for $M_{\alpha_\ell}\circ\cdots\circ M_{\alpha_1}$ if $w=\alpha_1\cdots\alpha_\ell$.
\end{definition}
The $U_{S_*}$ are open subsets of $\Hilb_{d,n}\left(Q\right)$ since the defining condition can be reformulated as the nonvanishing of a determinant. It is therefore easy to see that $\left(U_{S_*}\right)$ for all subforests $S_*$ of $\mathcal{F}_n\left(Q\right)$ form an open covering of $\Hilb_{d,n}\left(Q\right)$ because of the stability condition which in this case states that stable points correspond to exactly those pairs $\left(M,f\right)$ such that $M$ is generated by the image of $f$ (c.f. \cite[Corollary 3.3]{RNCH}). Note furthermore that the $U_{S_*}$ are affine analogous to \cite[Lemma 3.4]{RNCH}. We denote the set of all forests $S_*$ such that $U_{S_*}\not=\emptyset$ by $\Phi_{d,n}$.

\begin{definition}\label{Def:CS}
	For a finite subforest $S_*$ of $\mathcal{F}_n\left(Q\right)$ let $C\left(S_*\right)$ be the set of all vertices $\left(q,i,w\right)\in\mathcal{F}_n\left(Q\right)$ such that $w=\alpha_1\cdots\alpha_\ell\notin S_{q,i}$, but $\alpha_1\cdots\alpha_{\ell-1}\in S_{q,i}$. So one can think of $C\left(S_*\right)$ as a kind of corona of $S_*$.
\end{definition}

\begin{lemma}\label{Lem:Completion}
	Let $\left(M_*,f_*\right)\in\Hilb_{d,n}\left(Q\right)$ and $\bar{S}_*$ a forest such that the elements $\left(M_wf_q\left(v_{q,i}\right)\right)_{\left(q,i,w\right)\in\bar{S}_*}$ are linearly independent. Then there is a forest $S'_*\supseteq\bar{S}_*$ such that $\left(M_*,f_*\right)\in U_{S'_*}$.
\end{lemma}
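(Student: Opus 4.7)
The plan is to construct $S'_*$ by a greedy extension of $\bar S_*$. I would enumerate the vertices of $\mathcal{F}_n\left(Q\right)$ in some order compatible with the tree structure---most conveniently by non-decreasing path length, breaking ties via the lexicographic order of Definition \ref{Def:Order}---so that every vertex appears after its predecessor. Starting from $S'_* := \bar S_*$, I would traverse this enumeration and append a vertex $\left(q,i,w\right)$ to $S'_*$ precisely when (a) its immediate predecessor already lies in $S'_*$ (vacuous for the roots), and (b) the vector $M_w f_q\left(v_{q,i}\right)$ is linearly independent from the vectors already selected. By construction $S'_*$ remains a subforest of $\mathcal{F}_n\left(Q\right)$ and the selected vectors remain linearly independent at every stage; since $\dim M = \sum_q d_q$ is finite, only finitely many additions can ever be made and the procedure stabilizes.

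The nontrivial step is to show that the resulting set $\left(M_w f_q\left(v_{q,i}\right)\right)_{\left(q,i,w\right)\in S'_*}$ actually forms a basis of $M$, which together with the forest property yields $\left(M_*,f_*\right)\in U_{S'_*}$ by Definition \ref{Def:US}. Let $V\subseteq M$ denote the span of these vectors. Recall from Lemma \ref{cosplus} and Proposition \ref{cos} that the stability of $\left(M_*,f_*\right)\in\Hilb_{d,n}\left(Q\right)$ is equivalent to $M$ being generated by the image of $f$, so the full collection $\left\{M_w f_q\left(v_{q,i}\right) : \left(q,i,w\right)\in\mathcal{F}_n\left(Q\right)\right\}$ already spans $M$. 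It is therefore enough to verify that $V$ contains $\IM(f)$ and is $M_\alpha$-stable for every arrow $\alpha\in Q_1$; then $V$ is a subrepresentation containing the generating image of $f$, forcing $V=M$.

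Both verifications reduce to analyzing a vertex that fails to be added. For $\IM(f)$: each root $\left(q,i,()\right)$ satisfies (a) trivially, so if it is not in $S'_*$ then (b) must have failed, meaning $f_q\left(v_{q,i}\right)$ already lay in the span at that moment and therefore sits in the final $V$. For $M_\alpha$-stability, given $\left(q,i,w\right)\in S'_*$ and an arrow $\alpha$ starting at $t(w)$, the length-respecting order guarantees $\left(q,i,w\right)$ is processed before $\left(q,i,w\alpha\right)$, so when the latter is considered its predecessor lies in $S'_*$ and (a) automatically holds; if it was still not added, then (b) failed, placing $M_{w\alpha}f_q\left(v_{q,i}\right)$ inside $V$. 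The step I expect to be the main obstacle---or at least the one requiring the most care---is precisely this bookkeeping: one must be sure that conditions (a) and (b) are tracked against the correct intermediate $S'_*$, and that the enumeration processes every prefix before its extensions so that the failure dichotomy ``already in $S'_*$ or else (b) failed and lies in $V$'' applies uniformly. Insisting on the length-first order handles this cleanly, and finiteness of $\dim M$ closes the argument.
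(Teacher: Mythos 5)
Your proof is correct and is essentially the paper's argument: the paper likewise extends $\bar S_*$ greedily, adjoining one corona vertex at a time whose vector increases the span, and terminates by finite-dimensionality, with the same key observation that a span closed under passing to the corona and containing $\IM(f)$ must be all of $M$ because stability means $M$ is generated by the image of $f$. Your length-first enumeration is just a more explicit bookkeeping of the same induction.
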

\begin{proof}
	By induction on $\left|\bar{S}_*\right|$. Let $S_*\coloneq\bar{S}_*\cup C\left(\bar{S}_*\right)$ and
	\begin{align*}
		\bar{U}_j&\coloneq\left<\left(M_wf_q\left(v_{q,i}\right)\right)_{\left(q,i,w\right)\in\bar{S}_*}\right>_{t\left(w\right)=j}\\
		U_j&\coloneq\left<\left(M_wf_q\left(v_{q,i}\right)\right)_{\left(q,i,w\right)\in S_*}\right>_{t\left(w\right)=j}
	\end{align*}
	for all $j\in I$. Of course we have
	\begin{equation}\label{eq:Compdim}
		\sum_{j\in I}\dim U_j\geq\sum_{j\in I}\dim\bar{U}_j.
	\end{equation}
	If this is an equality we have $U_j=M_j$ for all $j\in I$ since by assumption $\left<\Complex\left<M_*\right>f_q\left(v_{q,i}\right)\right>_{t\left(w\right)=j}=M_j$. Otherwise there exists $\left(q,i,w\right)\in S_*\ohne\bar{S}_*=C\left(S_*\right)$ such that $M_wf_q\left(v_{q,i}\right)\notin\bar{U}_j$ for all $j\in I$. Append $\left(q,i,w\right)$ to $S_*$ and start again with $S_*$ instead of $\bar{S}_*$.\\
	The algorithm stops after finitely many steps since all vector spaces involved have finite dimension.
\end{proof}

\begin{definition}\label{Def:ZS}
	For a subforest $S_*$ of $\mathcal{F}_n\left(Q\right)$ denote by $Z_{S_*}$ the set of tuples $\left(M_*,f_*\right)\in U_{S_*}$ such that for all $\left(q,i,w\right)\in C\left(S_*\right)$ the following holds:
	\begin{equation}
		M_w\circ f_q\left(v_{q,i}\right)\in\left<M_{w'}f_{q'}\left(v_{q',i'}\right)\right>_{\substack{S_*\ni\left(q',i',w'\right)<\left(q,i,w\right)\\ t\left(w'\right)=t\left(w\right)}}.
		\label{eq:DefZ}
	\end{equation}
\end{definition}
By definition the sets $Z_{S_*}$ are also affine due to the fact that they arise from the $U_{S_*}$ by eliminating some generators. Now we can formulate the main result of this section:
\begin{theorem}\label{Th:ZU}
	We have
	\begin{equation*}
		Z_{S_*}=U_{S_*}\ohne\bigcup_{S'_*<S_*}U_{S'_*}.
	\end{equation*}
\end{theorem}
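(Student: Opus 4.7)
The plan is to establish the two set inclusions separately. The forward inclusion $Z_{S_*} \subseteq U_{S_*} \setminus \bigcup_{S'_* < S_*} U_{S'_*}$ is the more direct direction: if $(M,f)$ were simultaneously in $Z_{S_*}$ and $U_{S'_*}$ for some $S'_* < S_*$, then comparing the sorted enumerations $\{w_1 < \ldots < w_s\}$ of $S_*$ and $\{w'_1 < \ldots < w'_s\}$ of $S'_*$ (which have the same cardinality $\dim d$, since both parametrize bases) yields a minimal index $\ell$ with $w'_\ell < w_\ell$. Setting $v = w'_\ell$, a short verification shows $v \in C(S_*)$: it is not in $S_*$, and its predecessor in its tree lies in $\{w_1,\ldots,w_{\ell-1}\} = \{w'_1,\ldots,w'_{\ell-1}\} \subseteq S_*$. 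Condition (\ref{eq:DefZ}) at $v$ therefore places $V(v) := M_v f_{q_v}(v_{q_v,i_v})$ in the span of $V(w'_k)$ for $k < \ell$ with $t(w'_k) = t(v)$, contradicting the linear independence of the basis vectors coming from $(M,f) \in U_{S'_*}$ in the block indexed by $t(v)$.

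For the reverse inclusion, take $(M,f) \in U_{S_*} \setminus Z_{S_*}$ and produce $S'_* < S_*$ with $(M,f) \in U_{S'_*}$. By hypothesis, condition (\ref{eq:DefZ}) fails at some $v \in C(S_*)$, so the expansion $V(v) = \sum_{w \in S_*,\, t(w)=t(v)} c_w V(w)$ has a nonzero coefficient $c_{w_0}$ with $w_0 > v$. Let $w_0$ be maximal in the total order among $A := \{w \in S_* : w > v,\, t(w) = t(v),\, c_w \neq 0\}$. Observe that $v$ and $w_0$ are unrelated as ancestor/descendant in $\mathcal{F}_n(Q)$: the former would force $v \in S_*$ by the subforest condition on $S_*$, and the latter would give $w_0 < v$ by length comparison. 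Define the subtree swap $S'_* = (S_* \setminus T) \cup T'$, where $T = \{w_0 \gamma \in S_*\}$ is the $S_*$-subtree rooted at $w_0$ and $T' = \{v \gamma : w_0 \gamma \in T\}$ is its transport to the tree containing $v$; this is well-defined because $t(v) = t(w_0)$ makes the outgoing subtrees of $v$ and $w_0$ canonically isomorphic via paths in $Q$. That $S'_*$ is a subforest is checked element-by-element (predecessors of $S_* \setminus T$ elements stay in $S_* \setminus T$; non-root elements of $T'$ have predecessors in $T'$; the predecessor of $v$ is strictly less than $v < w_0$ and so lies in $S_* \setminus T$), and $S'_* < S_*$ because tracing the sorted enumerations, the first disagreement gives $v \in S'_*$ against $w_0 \in S_*$ (or, if elements of $S_* \setminus T$ lie between $v$ and $w_0$, $v$ against such an element, which is still strictly larger than $v$).

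\textbf{The main obstacle} is verifying the basis property $(M,f) \in U_{S'_*}$, equivalently that the change-of-basis matrix from the $S_*$-basis to the $S'_*$-basis of each $M_j$ is invertible. The plan is to use the identity $V(v\gamma) = M_\gamma V(v) = c_{w_0} V(w_0 \gamma) + \sum_{w_k \neq w_0} c_{w_k} V(w_k \gamma)$ and to order the paths $\gamma$ indexing $T$ (respectively $T'$) by length. Then the $|T| \times |T|$ submatrix coupling $T$ to $T'$ should be lower triangular with $c_{w_0}$ on the diagonal: the diagonal entry comes from the $w_k = w_0$ summand; descendants of $w_0$ among the $\{w_k\}$ are excluded by maximality of $w_0$ in $A$ (they lie strictly above $w_0$ in the total order, forcing their coefficients to vanish); and proper ancestor contributions, arising from identities $w_k \gamma_i = w_0 \gamma_j$ where $w_0 = w_k \eta$ with $\eta \neq ()$, force $\gamma_i = \eta \gamma_j$ and hence $|\gamma_i| > |\gamma_j|$, i.e., strictly below the diagonal. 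Combined with the identity block on $S_* \setminus T$, triangularity and $c_{w_0} \neq 0$ yield invertibility, completing the proof.
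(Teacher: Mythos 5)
Your reverse inclusion contains a genuine gap, and it is fatal to the construction as written. In the triangularity argument you only account for the summands $c_{w}V(w\gamma_i)$ for which $w\gamma_i$ happens to lie in $S_*$ (the cases $w=w_0$, $w$ a proper ancestor of $w_0$, $w$ a descendant of $w_0$). But for a general $w$ with $c_w\neq 0$ the path $w\gamma_i$ need not lie in $S_*$ at all; then $V(w\gamma_i)$ is an uncontrolled vector whose expansion in the $S_*$-basis can have nonzero coefficients on columns $V(w_0\gamma_j)$ with $\left|\gamma_j\right|\geq\left|\gamma_i\right|$, destroying both the triangular shape and the invertibility. Concretely, take the quiver with one vertex and two loops $\alpha<\beta$, $n=1$, $d=5$, $S_*=\left\{(),\alpha,\beta,\beta\alpha,\beta\beta\right\}$, and a representation with $V(()),V(\alpha),V(\beta),V(\beta\alpha),V(\beta\beta)$ a basis, $M_\alpha V(\alpha)=c_0V(())+c_1V(\alpha)+c_2V(\beta)$ with $c_1,c_2\neq0$, and $M_\beta V(\alpha)=\cdots+a_4V(\beta\beta)$ with $a_4=-c_2/c_1$. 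The condition \eqref{eq:DefZ} fails at $v=\alpha\alpha$ (which is even the minimal failing corona element), your recipe gives $w_0=\beta$, $T=\left\{\beta,\beta\alpha,\beta\beta\right\}$, $S'_*=\left\{(),\alpha,\alpha\alpha,\alpha\alpha\alpha,\alpha\alpha\beta\right\}$, and a direct computation shows all five vectors $V(u)$, $u\in S'_*$, have vanishing $V(\beta\beta)$-coordinate, so they do not span and $(M_*,f_*)\notin U_{S'_*}$. The paper avoids this entirely by a different route: it truncates $S_*$ at the \emph{minimal} failing corona element to get a linearly independent family $\bar S_*$ and then invokes the greedy completion of Lemma \ref{Lem:Completion} to produce \emph{some} forest $S'_*$ with $(M_*,f_*)\in U_{S'_*}$, checking afterwards that any such completion is smaller than $S_*$; no explicit change-of-basis matrix is needed.

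A second, smaller issue concerns the forward inclusion. You deduce from $S'_*<S_*$ that the first disagreement of the \emph{global} sorted enumerations satisfies $w'_\ell<w_\ell$. Under the order actually used in the paper's proof of Lemma \ref{Lem:ZinU} — forests are compared tree by tree, and a tree of strictly larger cardinality is \emph{smaller} — this implication fails: e.g. $S'_*=\left(\left\{(),\beta,\beta\alpha\right\},\emptyset\right)<S_*=\left(\left\{(),\alpha\right\},\left\{()\right\}\right)$ although the first global disagreement is $\alpha$ versus $\beta$ in favour of $S_*$. That case ($\left|S'_{\succ(q,i)}\right|>\left|S_{\succ(q,i)}\right|$) is treated in the paper by a separate dimension count, using that the $Z_{S_*}$-conditions bound the dimension of everything generated from the first few framing vectors by $\sum\left|S_{s,k}\right|$; your single-corona-element contradiction does not cover it. Your corona argument for the equal-cardinality case is essentially the paper's case 2, and is fine.
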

\begin{corollary}
	Let $Q$ be a quiver, $d,n\in\Natural I$. Then $\Hilb_{d,n}\left(Q\right)$ has a cell decomposition whose cells are parametrised by $\Phi_{d,n}$.
\end{corollary}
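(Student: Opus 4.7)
The plan is as follows. First, I would combine Theorem \ref{Th:ZU} with Lemma \ref{Lem:Completion} to show that the sets $\{Z_{S_*}\}_{S_* \in \Phi_{d,n}}$ form a disjoint partition of $\Hilb_{d,n}(Q)$: for any point $(M_*, f_*)$, Lemma \ref{Lem:Completion} applied with $\bar{S}_* = \emptyset$ produces some $S_* \in \Phi_{d,n}$ with $(M_*, f_*) \in U_{S_*}$; selecting the minimal such $S_*$ in the order of Definition \ref{Def:Order} and invoking Theorem \ref{Th:ZU} places the point in $Z_{S_*}$, while disjointness of the $Z_{S_*}$ across distinct $S_*$ is immediate from the characterization there. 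A preparatory remark is that $\Phi_{d,n}$ is finite: nonemptiness of $U_{S_*}$ forces the number of $j$-vertices of $S_*$ to equal $d_j$ for each $j\in I$, hence $|S_*|=\dim d$, and each tree $T_{q,i}$ has finite branching since $Q$ has finitely many arrows, leaving only finitely many such forests.

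Second, I would identify each $Z_{S_*}$ with an affine space, following the argument cited as \cite[Lemma 3.4]{RNCH}. Using the $G_d$-action one normalises the basis $(M_w f_q(v_{q,i}))_{(q,i,w)\in S_*}$ of $M_*$ to a standard basis indexed by $S_*$. The remaining data determining $(M_*,f_*)$ is then precisely the collection of coefficients expressing each corona vector $M_w f_q(v_{q,i})$ for $(q,i,w)\in C(S_*)$ in that basis, restricted to the basis vectors with target $t(w)$. These coefficients are free parameters on $U_{S_*}$, yielding an isomorphism $U_{S_*}\cong \mathbb{A}^{M_{S_*}}$, and condition \eqref{eq:DefZ} cuts $Z_{S_*}$ out as the coordinate subspace on which those coefficients indexed by elements $(q',i',w')\in S_*$ greater than $(q,i,w)$ vanish, producing $Z_{S_*}\cong \mathbb{A}^{N_{S_*}}$.

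Third, I would repackage the partition as a descending closed filtration. Enumerating $\Phi_{d,n} = \{S_{*,0} < S_{*,1} < \cdots < S_{*,N-1}\}$ in increasing order and setting $X_k := \Hilb_{d,n}(Q) \setminus \bigcup_{i < k} U_{S_{*,i}}$, each $X_k$ is closed as the complement of a finite union of opens, $X_0 = \Hilb_{d,n}(Q)$, $X_N = \emptyset$, and Theorem \ref{Th:ZU} together with the first step yields
\[
X_k \setminus X_{k+1} \;=\; U_{S_{*,k}} \setminus \bigcup_{i<k} U_{S_{*,i}} \;=\; Z_{S_{*,k}},
\]
which is an affine cell by the second step. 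The cells are in natural bijection with $\Phi_{d,n}$, as required.

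The main obstacle will be the second step: one must verify that the expansion coefficients of the corona vectors are genuinely algebraically independent coordinates on $U_{S_*}$, so that $U_{S_*}$ is an affine space (not merely an affine variety) and $Z_{S_*}$ is cut out as a coordinate subspace. This requires an inductive argument along paths $w$ for $(q,i,w)\in C(S_*)$, solving for the entries of the $M_\alpha$ and $f_q$ in terms of these expansion coefficients; this is essentially the content imported from \cite{RNCH}, and it clarifies why the lexicographical order of Definition \ref{Def:Order} is exactly the one compatible with Theorem \ref{Th:ZU}.
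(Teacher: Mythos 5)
Your proof is correct and follows essentially the same route as the paper: the paper's filtration by the closed sets $A_{S_*}=\Hilb_{d,n}(Q)\setminus\bigcup_{S'_*<S_*}Z_{S'_*}$ coincides with your $X_k$, since Theorem \ref{Th:ZU} gives $\bigcup_{S'_*<S_*}Z_{S'_*}=\bigcup_{S'_*<S_*}U_{S'_*}$, and the successive differences are the cells $Z_{S_*}$. The covering, disjointness and affineness points you elaborate are exactly what the paper disposes of in the remarks surrounding Definitions \ref{Def:US} and \ref{Def:ZS} (citing \cite[Lemma 3.4]{RNCH} for $U_{S_*}$ being an affine space).
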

\begin{proof}
	For each forest $S_*\in\Phi_{d,n}$ define
	\begin{equation*}
		A_{S_*}\coloneq\Hilb_{d,n}\left(Q\right)\setminus\bigcup_{S'_*<S_*}Z_{S'_*}
	\end{equation*}
	which is a closed subvariety of $\Hilb_{d,n}\left(Q\right)$. The enumeration of the subforests in the chosen order now gives the required filtration.
\end{proof}
We will give the proof of the theorem by showing each inclusion seperately.
\begin{lemma}\label{Lem:ZinU}
	$Z_{S_*}\subseteq U_{S_*}\ohne\bigcup_{S'_*<S_*}U_{S'_*}$.
\end{lemma}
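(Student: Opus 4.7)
The plan is to argue by contradiction: assume that some point $(M_*, f_*) \in Z_{S_*}$ also lies in $U_{S'_*}$ for a forest $S'_* < S_*$, and derive an impossibility from the defining relations of $Z_{S_*}$. The guiding intuition is that the $Z$-condition at the corona of $S_*$ is precisely what prevents a configuration from ``escaping'' into a lexicographically smaller stratum, so the proof should amount to reading off exactly this failure. First I would observe that membership in both $U_{S_*}$ and $U_{S'_*}$ forces $|\{(q,i,w) \in S_* : t(w) = j\}| = d_j = |\{(q,i,w) \in S'_* : t(w) = j\}|$ for every $j \in I$, so in particular $|S_*| = |S'_*|$. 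The first alternative of the forest order in Definition~\ref{Def:Order} therefore does not apply, and $S'_* < S_*$ reduces to lex-smallness: listing the vertices in increasing order as $w'_1 < \cdots < w'_N$ and $w_1 < \cdots < w_N$, there is a minimal index $\ell$ with $w'_\ell < w_\ell$. Necessarily $w'_\ell \notin S_*$, since the first $\ell-1$ vertices of $S_*$ coincide with those of $S'_*$ and are all strictly less than $w'_\ell$, while the remaining $S_*$-vertices are at least $w_\ell > w'_\ell$.

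The crucial step will be to verify that $w'_\ell$ lies in the corona $C(S_*)$, so the $Z$-condition becomes directly applicable at this vertex. If $w'_\ell$ is a path of positive length, its immediate predecessor is a proper prefix, hence strictly smaller in the path order, lies in $S'_*$ by closure under predecessors, and therefore sits at some position $< \ell$ in $S'_*$; the common prefix with $S_*$ transports it to $S_*$, so Definition~\ref{Def:CS} yields $w'_\ell \in C(S_*)$. The remaining case, where $w'_\ell = (q,i,())$ is a root, is the one to watch: here predecessor-closure forces the subtree $S_{q,i}$ to be empty, and one must interpret the empty-product predecessor condition in Definition~\ref{Def:CS} as vacuously satisfied so that roots of absent subtrees count as corona elements. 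This interpretive point is the main technical obstacle, and I would flag it explicitly as the reading of the corona required for the lemma (and consistent with the greedy lex-sweep producing $S_*$).

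Once $w'_\ell \in C(S_*)$ is secured, writing $w'_\ell = (q,i,w)$, the defining relation of $Z_{S_*}$ at $w'_\ell$ reads
\[
M_w f_q(v_{q,i}) \in \langle M_{w_k} f_{q_k}(v_{q_k,i_k}) : (q_k,i_k,w_k) \in S_*,\, w_k < w'_\ell,\, t(w_k) = t(w'_\ell) \rangle.
\]
Because $w_{\ell-1} < w'_\ell < w_\ell$, the $S_*$-vertices that are strictly smaller than $w'_\ell$ are exactly $w_1, \ldots, w_{\ell-1}$, and by minimality of $\ell$ these coincide with $w'_1, \ldots, w'_{\ell-1}$. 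Consequently $M_w f_q(v_{q,i})$ --- which is the $\ell$-th element of the $S'_*$-basis of $M_{t(w'_\ell)}$ --- lies in the span of the earlier $S'_*$-basis vectors carrying the same $t$-value, contradicting the linear independence implied by $(M_*,f_*) \in U_{S'_*}$. This yields the desired disjointness $Z_{S_*} \cap U_{S'_*} = \emptyset$ for every $S'_* < S_*$.
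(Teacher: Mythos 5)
Your core mechanism --- locating a corona vertex of $S_*$ that lies in $S'_*$, whose relation \eqref{eq:DefZ} then exhibits an $S'_*$-basis vector as a combination of earlier ones, contradicting $(M_*,f_*)\in U_{S'_*}$ --- is exactly the paper's second case, and you execute it correctly; your reading of roots of empty subtrees as corona elements is also consistent with how the paper uses $C(S_*)$ (see e.g.\ the conditions $\left(a,1,()\right)\in\langle\,\rangle=0$ in Table \ref{tab:mps}).

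The gap is in your opening reduction. The order of Definition \ref{Def:Order} compares forests tree by tree, and for each individual tree the cardinality test comes first: $S<S'$ if $\left|S\right|>\left|S'\right|$, and only for equal cardinalities does one pass to the lexicographic comparison of paths. Equality of the total vertex counts (which indeed both equal $\sum_j d_j$) does not force the individual trees to have equal sizes, so the first alternative of the order does \emph{not} drop out. Concretely, take $S_{q,i}=\{(),a\}$ and $S'_{q,i}=\{(),b,b\alpha\}$ with $a<b$ and all earlier trees equal: then $S'_*<S_*$ because $\left|S'_{q,i}\right|>\left|S_{q,i}\right|$, yet in your global concatenated list the first differing vertex is $(q,i,a)$, which belongs to $S_*$ and not to $S'_*$, so you have $w_\ell<w'_\ell$ (the wrong direction) and no corona vertex of $S_*$ inside the common prefix to feed into \eqref{eq:DefZ}. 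This case needs a different argument, and the paper supplies one by counting dimensions: the relations defining $Z_{S_*}$ force the subrepresentation generated by the framing vectors $f(v_{r,j})$ with $(r,j)\leq\succ(q,i)$ to be spanned by the $S_*$-basis vectors of those trees, hence to have dimension exactly $\sum_{(s,k)\leq\succ(q,i)}\left|S_{s,k}\right|$, while linear independence of the $S'_*$-vectors gives the lower bound $\sum_{(s,k)\leq\succ(q,i)}\left|S'_{s,k}\right|$, which is strictly larger --- a contradiction. You need to add this size-discrepancy case (or prove that your global lexicographic order coincides with the paper's, which it does not) before the proof is complete.
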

\begin{proof}
	The inclusion $Z_{S_*}\subseteq U_{S_*}$ is clear by definition. So let us assume there is a tuple $\left(M_*,f_*\right)\in Z_{S_*}\cap U_{S'_*}$ for a forest $S'_*<S_*$. By definition of $Z_{S_*}$ we have condition \eqref{eq:DefZ} for all $\left(q,i,w\right)\in C\left(S_*\right)$.
	Let $\left(q,i\right)$ be maximal with respect to the property $S'_{q',i'}=S_{q',i'}$ for all $\left(q',i'\right)\leq\left(q,i\right)$ which means $q'<q$ or $q=q'$ and $i'<i$. By definition of the order (see Definition \ref{Def:Order}) we have to discuss two cases:
	\begin{enumerate}
		\item $\left|S'_{\succ\left(q,i\right)}\right|>\left|S_{\succ\left(q,i\right)}\right|$ and all predecessors match. Then we have
		\begin{align*}
			\sum_{\left(r,j\right)\leq\succ\left(q,i\right)}\sum_{s\in I}\dim\left(\Complex\left<M_w\right>f\left(v_{r,j}\right)_{t\left(w\right)=s}\right)
			&\geq\sum_{\left(s,k\right)\leq\succ\left(q,i\right)}\left|S'_{s,k}\right|\\
			&>\sum_{\left(s,k\right)\leq\succ\left(q,i\right)}\left|S_{s,k}\right|\\
			=\sum_{\left(r,j\right)\leq\succ\left(q,i\right)}\sum_{s\in I}\dim\left(\Complex\left<M_w\right>f\left(v_{r,j}\right)_{t\left(w\right)=s}\right)
		\end{align*}
		and that is a contradiction.
		\item $\left|S'_{\succ\left(q,i\right)}\right|=\left|S_{\succ\left(q,i\right)}\right|$. Assume $S_{\succ\left(q,i\right)}=\left(w_j\right)_j$ and $S'_{\succ\left(q,i\right)}=\left(w'_j\right)_j$. Let $m$ be minimal with respect to the property $w'_{m+1}<w_{m+1}$. By definition of the lexicographical order of the paths we have to discuss again two cases:
		\begin{enumerate}
			\item $w'_{m+1}$ is a proper subword of $w_{m+1}$. Then we have $w'_{m+1}\in S_{\succ\left(q,i\right)}$ which is a contradiction.
			\item Assume $w_{m+1}=\left(\alpha_1\cdots\alpha_r\right)$ and $w'_{m+1}=\left(\alpha'_1\cdots\alpha'_{r'}\right)$. Let $s$ be minimal with respect to the property
			\begin{equation*}
				\alpha'_{s+1}<\alpha_{s+1}.
			\end{equation*}
			Then we have
			\begin{equation*}
				w\coloneq\alpha_1\cdots\alpha_s\alpha'_{s+1}\in S'_{\succ\left(q,i\right)}\cap C\left(S_*\right).
			\end{equation*}
			So we have in \eqref{eq:DefZ} the situation where all indices on the right side are contained in $S'_{\succ\left(q,i\right)}$, and thus a contradiction to the defining condition of $U_{S'_*}$ since by assumption $\left(M_*,f_*\right)\in U_{S'_*}$.
		\end{enumerate}
	\end{enumerate}
\end{proof}
\begin{lemma}\label{Lem:UinZ}
	$Z_{S_*}\supseteq U_{S_*}\ohne\bigcup_{S'_*<S_*}U_{S'_*}$.
\end{lemma}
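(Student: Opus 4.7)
The plan is to reduce the problem to a single ``minimal obstruction'' in $C(S_*)$ and then apply the greedy extension procedure from the proof of Lemma~\ref{Lem:Completion}. Given $(M_*,f_*)\in U_{S_*}\setminus Z_{S_*}$, I choose $(q,i,w)\in C(S_*)$ to be the minimum, with respect to the total order on $\mathcal{F}_n(Q)$ of Definition~\ref{Def:Order}, at which the span condition \eqref{eq:DefZ} fails. I then set
$$\bar{S}_*:=\{s\in S_*\,:\,s<(q,i,w)\}\cup\{(q,i,w)\}.$$
The set $\bar{S}_*$ is a subforest of $\mathcal{F}_n(Q)$: closure under predecessors holds because $S_*$ is a subforest, and the predecessor of $(q,i,w)$ lies in $S_*$ by the definition of the corona and is strictly smaller than $(q,i,w)$. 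The vectors indexed by $\bar{S}_*$ are, per type $t(w')$, linearly independent: the vectors coming from $\bar{S}_*\cap S_*$ are part of a basis by hypothesis, and the failure of \eqref{eq:DefZ} at $(q,i,w)$ is exactly the statement that $M_w f_q(v_{q,i})$ does not lie in the span of the preceding same-type vectors of $\bar{S}_*$.

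I will then invoke Lemma~\ref{Lem:Completion} to extend $\bar{S}_*$ to a subforest $S'_*$ with $(M_*,f_*)\in U_{S'_*}$. The heart of the argument, and the main obstacle, is to verify that the greedy completion never appends a vertex strictly less than $(q,i,w)$. Any candidate vertex $s=(q',i',w')$ in the current corona has its unique predecessor in the current set; if $s<(q,i,w)$, then the predecessor is strictly less than $(q,i,w)$ as well (proper prefixes are lex-smaller), hence lies in $\bar{S}_*\cap S_*=\{s''\in S_*\,:\,s''<(q,i,w)\}$. Thus $s$ is a child of an $S_*$-vertex, so either $s\in S_*$ (impossible, as then $s\in\bar{S}_*$ already) or $s\in C(S_*)$. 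In the latter case the minimality of $(q,i,w)$ forces \eqref{eq:DefZ} to hold at $s$, so $M_{w'}f_{q'}(v_{q',i'})$ lies in the span of the same-type vectors indexed by the $S_*$-elements below $s$, all of which lie in $\bar{S}_*$. The greedy step therefore skips $s$. Every vertex actually appended lies in some $T_{q_0,i_0}$ with $(q_0,i_0)\geq(q,i)$ and a path that is lex-greater than (or equal to) $w$; a direct check on the lex order on paths shows that all children of such vertices are again $>(q,i,w)$, so the property is preserved throughout the process.

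It remains to compare $S'_*$ and $S_*$ in the forest order of Definition~\ref{Def:Order}. Both have the same cardinality $\sum_{j\in I}d_j$, because the corresponding tuples of vectors are bases of $\bigoplus_{j\in I}M_j$. By the previous step, $S'_*$ and $S_*$ coincide on the set of vertices strictly less than $(q,i,w)$. In the enumeration of $S'_*$ the vertex $(q,i,w)$ appears at the next position, whereas the corresponding entry of $S_*$ is some element strictly greater than $(q,i,w)$, since $(q,i,w)\notin S_*$. Therefore $S'_*<S_*$, exhibiting $(M_*,f_*)\in U_{S'_*}$ with $S'_*<S_*$ and thereby establishing the desired inclusion.
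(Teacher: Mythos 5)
Your proof is correct and follows essentially the same route as the paper's: the same choice of minimal corona vertex $(q,i,w)$ at which \eqref{eq:DefZ} fails, the same auxiliary forest $\bar S_*$, the same appeal to Lemma \ref{Lem:Completion}, and the same conclusion $S'_*<S_*$ from the agreement of $S'_*$ and $S_*$ below $(q,i,w)$. The only (cosmetic) difference is that you verify constructively that the completion procedure never appends a vertex smaller than $(q,i,w)$, whereas the paper obtains the same agreement by a linear-independence contradiction; both hinge on the minimality of $(q,i,w)$.
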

\begin{proof}
	Assume there is a tuple $\left(M_*,f_*\right)\in\left(U_{S_*}\ohne\bigcup_{S'_*<S_*}U_{S'_*}\right)\ohne Z_{S_*}$. So we can choose $q,i,w$ minimal with respect to the property
	\begin{equation*}
		\left(q,i,w\right)\in C\left(S_*\right)\quad\text{and}\quad
		M_wf_q\left(v_{q,i}\right)\notin\left<M_{w'}f_{q'}\left(v_{q',i'}\right)\right>_{\substack{S_*\ni\left(q',i',w'\right)<\left(q,i,w\right),\\ t\left(w'\right)=t\left(w\right)}}.
	\end{equation*}
	Write $S_{q,i}=\left\{w_1<\ldots<w_p<w_{p+1}<\ldots\right\}$ with $w_p<w<w_{p+1}$. Define a new forest $\bar{S}_*$ containing the trees $\bar{S}_{r,j}\coloneq S_{r,j}$ for all $\left(r,j\right)<\left(q,i\right)$ and $\bar{S}_{q,i}\coloneq\left\{w_1<\ldots<w_p<w\right\}$. Then by assumption for $\left(q',i',w'\right)\in\bar{S}_*$ the elements $\left(M_{w'}f_{q'}\left(v_{q',i'}\right)\right)$ are linearly independent and by Lemma \ref{Lem:Completion} we have a forest $S'_*$ such that $\left(M_*,f_*\right)\in U_{S'_*}$.

	We will show $S'_*<S_*$ which gives us the desired contradiction. We have
	\begin{itemize}
		\item $S'_{r,j}=S_{r,j}$ for all $\left(r,j\right)<\left(q,i\right)$ since otherwise there was a $\bar{w}\in S'_{r,j}\ohne\bar{S}_{r,j}$, and we may assume $\bar{w}\in C\left(S_{r,j}\right)$. But $\left(q,i\right)$ was chosen to be minimal, so
		\begin{equation*}
			M_{\bar{w}}f_r\left(v_{r,j}\right)\in\left<M_{w''}f_{q''}\left(v_{q'',i''}\right)\right>_{%
			\substack{S_*\ni\left(q'',i'',w''\right)<\left(r,j,\bar{w}\right)\\ t\left(w''\right)=t\left(\bar{w}\right)}},
		\end{equation*}
		and hence $\left(M_*,f_*\right)\notin U_{S_*}$ which contradicts our former conclusion.
		\item In the same manner we can show that the first $p$ words of $S_{q,i}$ and $S'_{q,i}$ coincide. Since $w<w_{p+1}$ by assumption we have $S'_{q,i}<S_{q,i}$.
		\item If there is no such $w_{p+1}$ which means $S_{q,i}=\left\{w_1<\ldots<w_p\right\}$, we have $\left|S'_{q,i}\right|>\left|S_{q,i}\right|$ and hence $S'_*<S_*$.
	\end{itemize}
\end{proof}

\begin{corollary}\label{Cor:Poincare}
	From the above we derive a formula for the Poincaré polynomial:
	\begin{equation*}
		P_{\Hilb_{d,n}\left(Q\right)}\left(q\right)=\sum_{\substack{S_*\\ Z_{S_*}\not=\emptyset}}q^{\dim Z_{S_*}}.
	\end{equation*}
\end{corollary}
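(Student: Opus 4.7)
The plan is to deduce the formula from the cell-decomposition result established in Theorem \ref{Th:ZU} via the standard fact that a filtration of a complex variety by closed subvarieties whose successive differences are affine spaces produces a free cohomology concentrated in even degrees, with Poincaré polynomial equal to the sum of $q^{\dim(\text{cell})}$ over the cells. Two things must be checked to apply this: finiteness of the index set $\Phi_{d,n}$, and the fact that each non-empty $Z_{S_*}$ is isomorphic to an affine space whose dimension enters the formula as $\dim Z_{S_*}$.

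First I would observe that $\Phi_{d,n}$ is finite: every $S_* \in \Phi_{d,n}$ has cardinality $\sum_{i \in I} d_i$ because the basis requirement in Definition \ref{Def:US} forces $|S_{*}|_j = d_j$ for each $j\in I$, so $S_*$ is a finite subforest of $\mathcal{F}_n(Q)$, and only finitely many such subforests can have this total cardinality. This legitimises the enumeration $S_*^{(1)} < S_*^{(2)} < \ldots < S_*^{(N)}$ used to build the filtration $A_{S_*^{(k)}} = \Hilb_{d,n}(Q) \setminus \bigcup_{\ell < k} Z_{S_*^{(\ell)}}$ of the corollary's proof. Each $A_{S_*^{(k)}}$ is closed by Theorem \ref{Th:ZU} (which identifies $Z_{S_*}$ as the complement in $U_{S_*}$ of an open union), and $A_{S_*^{(k)}} \setminus A_{S_*^{(k+1)}} = Z_{S_*^{(k)}}$ by construction.

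The key step is to verify that each non-empty $Z_{S_*}$ is isomorphic to an affine space. For this I would use the $G_d$-action on the pairs $(M,f)$ to fix the distinguished basis: on $U_{S_*}$ one can use the action of $G_d$ to normalise so that $M_w f_q(v_{q,i})$ is the standard basis vector indexed by $(q,i,w) \in S_*$ in the appropriate $M_j$. After this gauge fixing, which exhibits $U_{S_*}$ as a closed subscheme of an affine space with coordinates the remaining matrix entries and framing coefficients, the defining conditions of $U_{S_*}$ become free (the matrix entries $M_\alpha$ and framing coefficients can be read off from the expansion of $M_w f_q(v_{q,i})$ for the "edge" vertices $(q,i,w) \in C(S_*)$ in the chosen basis). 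Under this parametrisation, the conditions of Definition \ref{Def:ZS} cutting out $Z_{S_*}$ inside $U_{S_*}$ are linear equations setting certain coefficients to zero (namely the coefficients of basis vectors indexed by $(q',i',w') > (q,i,w)$ with $t(w') = t(w)$). Thus $Z_{S_*}$ is an affine space, whose dimension equals the number of free coordinates remaining.

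The main obstacle I anticipate is making the gauge-fixing argument precise — in particular, verifying that the bases $(M_w f_q(v_{q,i}))_{(q,i,w) \in S_*}$ really provide a $G_d$-equivariant trivialisation of a principal bundle structure on $U_{S_*}$, so that the quotient coordinates really do form an affine space without hidden relations. This is essentially the content of the argument in \cite{RNCH} for the multiple-loop quiver, and I would expect to adapt that argument in full generality. Once this is done, the corollary follows by summing $q^{\dim Z_{S_*}}$ over all $S_* \in \Phi_{d,n}$ with $Z_{S_*} \neq \emptyset$; the vanishing of odd cohomology and the identification with Betti numbers comes for free from the affine cell decomposition.
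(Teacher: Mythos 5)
Your proposal is correct and follows essentially the same route as the paper: Theorem \ref{Th:ZU} gives the filtration by the closed sets $A_{S_*}$ with successive differences $Z_{S_*}$, the $Z_{S_*}$ are affine spaces because they are cut out of the affine spaces $U_{S_*}$ by setting certain coordinates to zero, and the standard fact about affine cell decompositions yields the Betti numbers. The only detail you elaborate beyond the paper is the gauge-fixing argument showing $U_{S_*}$ is an affine space, which the paper simply delegates to the analogous Lemma 3.4 of \cite{RNCH}.
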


\section{Relating combinatorics of forests and multipartitions}\label{sec:MP}
We know from Theorem \ref{Thm:Poincare} that we can obtain the Betti numbers of $\Hilb_{d,n}\left(Q\right)$ by a weighted counting of multipartitions which satisfy certain conditions. In detail we have for a quiver $Q$ and dimension vectors $d,n\in\Natural I$ the multipartitions $\left(\lambda^i_1,\ldots,\lambda^i_{d_i}\right)_{i\in I}$ such that the following holds:
\begin{equation}\label{eq:MPcond}
	\text{for all}\ 0\leq e<d\ \text{there exists an}\ i\in I\ \text{such that}\ \lambda^i_{d_i-e_i}<n_i-\left<e,i\right>.
\end{equation}

We are now in the situation where two combinatorial formulas for the Poincaré polynomials are available, namely Theorem \ref{Thm:Poincare} and Corollary \ref{Cor:Poincare}. It is therefore natural to expect the underlying combinatorial objects to be in natural bijection. We will now construct such a bijection $\varphi$, generalizing \cite[Proposition 6.2.1]{Stanley}.

We construct $\varphi$ as follows: Let $S_*\in\Phi_{d,n}$. Define $\lambda^j_m$ as the number of $j$-vertices $\left(q,i,w\right)\in C\left(S_*\right)$ such that there are at least $m$ $j$-vertices $\left(q',i',w'\right)\in S_*$ with $\left(q,i,w\right)<\left(q',i',w'\right)$.

\begin{lemma}\label{Lem:MPwd}
	The map $\varphi\colon\Phi_{d,n}\to S_{d,n}$ is well-defined, so the multipartitions we obtain satisfy \eqref{eq:MPcond}.
\end{lemma}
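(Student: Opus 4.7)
The plan is structured in three parts: first, verify that $\lambda = \varphi(S_*)$ actually sits in $\Lambda_d$; second, reformulate the $S_{d,n}$-condition combinatorially in terms of initial segments of vertices of $S_* \cup C(S_*)$; third, reduce the inequality to the framed stability of a point of $U_{S_*}$ by a proof by contradiction. Part (1) is immediate from the definition of $\varphi$: the count $\lambda^i_m$ decreases weakly in $m$ (because the defining condition on $v$ only gets stricter as $m$ grows), and since $S_*$ has exactly $d_i$ vertices of type $i$, the count is zero for $m > d_i$, giving $\lambda^i$ the required shape.

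For part (2), enumerate the $i$-vertices of $S_*$ increasingly as $v^i_1 < v^i_2 < \cdots < v^i_{d_i}$. Unpacking the definition of $\lambda$ shows that $\lambda^i_{d_i - e_i}$ equals the number of $i$-vertices of $C(S_*)$ strictly below $v^i_{e_i+1}$ in the global order. Introducing
$$T_i := \{v \in S_* \cup C(S_*) : t(v) = i,\ v < v^i_{e_i+1}\}$$
(with the convention $v^i_{d_i+1} := +\infty$ whenever $e_i = d_i$, so that $T_i$ then collects every type-$i$ vertex of $S_* \cup C(S_*)$), one has $\lambda^i_{d_i-e_i} = |T_i| - e_i$. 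The target inequality $\lambda^i_{d_i-e_i} < n_i - \langle e, i\rangle$ is therefore equivalent to $|T_i| < n_i + \sum_{\alpha : j \to i} e_j$ for some $i$ with $e_i < d_i$.

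For part (3), suppose toward a contradiction that $|T_i| \geq n_i + \sum_{\alpha : j \to i} e_j$ for every $i$ with $e_i < d_i$. Pick any $(M, f) \in U_{S_*}$, non-empty by hypothesis, and use the $S_*$-basis of $M$ to define
$$N_i := \mathrm{span}(v^i_1, \ldots, v^i_{e_i}) \text{ if } e_i < d_i, \qquad N_i := M_i \text{ if } e_i = d_i,$$
so that $\dim N = \dim e < \dim d$ and $N \subsetneq M$ is strictly proper. If $N$ can be shown to be a subrepresentation containing $\mathrm{Im}(f)$, then the framed stability of $(M, f)$ from Proposition \ref{cos} (which in the present case $\Theta = 0$ asserts that $M$ is generated by $\mathrm{Im}(f)$) forces $N = M$, contradicting $e < d$.

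The main obstacle is to choose the representative $(M, f)$ so that $N$ is genuinely closed under every arrow and contains every root vector. My plan is to specialise $(M, f)$ to lie in the locally closed cell $Z_{S_*}$ of Definition \ref{Def:ZS}, further restricted by the linear condition that each free root vector $f_q(v_{q, i'})$ with $(q, i', ()) \notin S_*$ lies in $N_q$; the standing lower bound on $|T_i|$ is exactly what one needs, via a counting comparison of arrow-extensions of the generators of $N$ against the size of $T$, to show that this supplementary condition is compatible with membership in $U_{S_*}$. For such $(M, f)$, whenever $\beta: i \to j$ is an arrow and $v^i_k = (q, i', w)$ is a generator of $N_i$, the same estimate forces the extension $u := (q, i', w\beta)$ to be a type-$j$ vertex of $S_* \cup C(S_*)$ lying in $T_j$; the defining relation \eqref{eq:DefZ} of $Z_{S_*}$ then expresses $M_\beta(v^i_k)$ as a combination of basis vectors indexed by $v^j_m < u$, all of which are in $N_j$. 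This produces the desired proper subrepresentation $N$ containing $\mathrm{Im}(f)$, and the contradiction with stability completes the proof.
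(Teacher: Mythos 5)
Your parts (1) and (2) are correct and agree with how the paper unpacks the definition of $\varphi$: $\lambda^i_{d_i-e_i}$ counts the $i$-vertices of $C(S_*)$ lying below the appropriate threshold vertex among the $i$-vertices of $S_*$, and the inequality to be proved is equivalent to $|T_i|<n_i+\sum_{\alpha:j\to i}e_j$ for some $i$ with $e_i<d_i$. The gap is in part (3), which is where all the content lies. You reduce the claim to producing a point of $U_{S_*}$ admitting the proper subrepresentation $N$ spanned by the first $e_i$ basis vectors at each vertex and containing ${\rm Im}(f)$, and you assert that ``the standing lower bound on $|T_i|$ is exactly what one needs, via a counting comparison'' to make this compatible with $U_{S_*}$. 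That counting comparison is never carried out, and it is precisely the lemma. Moreover, the reduction cannot be achieved by specialising inside $Z_{S_*}$: if some root $(q,i',())$ belongs to $S_*$ but sits at position $>e_q$ among the $q$-vertices of $S_*$, or if some prolongation $v^i_k\beta$ with $k\le e_i$ belongs to $S_*$ at position $>e_j$ among the $j$-vertices, then the corresponding vector is by definition a basis vector of $M$ lying outside $N$ for \emph{every} point of $U_{S_*}$, so no specialisation makes $N$ a subrepresentation containing ${\rm Im}(f)$. Ruling these configurations out, and controlling the corona prolongations that land at or above the threshold $v^j_{e_j+1}$ (for which the relation \eqref{eq:DefZ} allows contributions from basis vectors outside $N_j$), requires exactly the combinatorial estimate you are trying to avoid.

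The paper proves that estimate directly, with no geometry: assuming $\lambda^i_{d_i-e_i}\ge n_i-\langle e,i\rangle$ for all $i$, it chooses $i_0$ so that the threshold vertex of type $i_0$ is minimal in the global order among all the thresholds, and then bounds the number of $i_0$-vertices of $S_*\cup C(S_*)$ below that threshold: each such vertex is either one of the at most $n_{i_0}$ roots of type $i_0$, or a one-arrow prolongation $v\beta$ of a $j$-vertex $v$ of $S_*$ with $v<v\beta<$ threshold, and by minimality of $i_0$ such a $v$ lies below the $j$-threshold, so these contribute fewer than $\sum_{\alpha:j\to i_0}e_j$. Subtracting the $e_{i_0}$ vertices of this set that lie in $S_*$ itself yields $\lambda^{i_0}_{d_{i_0}-e_{i_0}}<n_{i_0}-\langle e,i_0\rangle$, the desired contradiction. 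This choice of a globally minimal threshold is the key idea missing from your argument; once you have it, the detour through stability and the cell $Z_{S_*}$ is unnecessary.
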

\begin{proof}
	Assume there is a dimension vector $0\leq e<d$ such that for all vertices $i\in I$ the condition
	\begin{equation*}
		\lambda^i_{d_i-e_i}\geq n_i-\left<e,i\right>
	\end{equation*}
	holds. This is equivalent to the following: There is a dimension vector $0\leq e<d$ such that for all vertices $i\in I$ there are at least $n_i-\left<e,i\right>$ $i$-vertices in $C\left(S_*\right)$ smaller than the $e_i$-th $i$-vertex in $S_*$. Let $i_0\in I$ be the uniquely determined minimal vertex from the $e_i$-th $i$-vertices in $S_*$. Then we have an upper bound for the number of $i_0$-vertices in $C\left(S_*\right)$ which are smaller then the $e_{i_0}$-th $i_0$-vertex in $S_*$ as the sum of the following:
	\begin{itemize}
		\item the number of possible subforests with root $i_0$: this is of course less than $n_{i_0}$,
		\item the number of $j$-vertices in $S_*$ which are smaller than the $e_{i_0}$-th $i_0$-point. Because of the minimality of $i_0$ this number is strictly less than $\sum_{j\to i_0}e_j$.
		\item $-e_{i_0}$, since $e_{i_0}$ vertices of the above lie in $S_*$.
	\end{itemize}
	Summation gives us that the desired number is strictly less than $n_{i_0}-e_{i_0}+\sum_{j\to i_0}e_j=n_{i_0}-\left<e,i_0\right>$ which is in contradiction to our assumption from above.
\end{proof}
\begin{theorem}\label{Th:MPbij}
	The map $\varphi$ defined above is bijective.
\end{theorem}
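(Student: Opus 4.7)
The plan is to construct an explicit inverse $\psi\colon S_{d,n}\to\Phi_{d,n}$ and to verify that it composes both ways with $\varphi$ to the identity. Given $\lambda\in S_{d,n}$, set $\mu^j_k:=\lambda^j_{d_j-k+1}-\lambda^j_{d_j-k+2}$ with the convention $\lambda^j_{d_j+1}:=0$, so that $\mu^j_k$ is destined to be the number of $j$-vertices of $C(\psi(\lambda))$ lying strictly between the $(k-1)$-st and $k$-th $j$-vertex of $\psi(\lambda)$. I traverse the vertices of $\mathcal{F}_n\left(Q\right)$ in the total order of Definition \ref{Def:Order}: for a vertex $v=(q,i,w)$ of type $j$ whose predecessor (if any) already lies in the partial $\psi(\lambda)$, I let $a_j$ be the current count of $j$-vertices in $\psi(\lambda)$ and $c_j$ the number of $j$-vertices placed into $C(\psi(\lambda))$ since the last $j$-vertex was adjoined to $\psi(\lambda)$, and I adjoin $v$ to $\psi(\lambda)$ exactly when $a_j<d_j$ and $c_j=\mu^j_{a_j+1}$, and to $C(\psi(\lambda))$ otherwise; vertices with predecessor outside $\psi(\lambda)$ are skipped.

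The two identities $\varphi\circ\psi=\mathrm{id}_{S_{d,n}}$ and $\psi\circ\varphi=\mathrm{id}_{\Phi_{d,n}}$ then follow by directly unraveling the definitions: for any forest $S_*$, the successive differences $\varphi(S_*)^j_{d_j-k+1}-\varphi(S_*)^j_{d_j-k+2}$ count precisely the $j$-vertices of $C(S_*)$ strictly between the $(k-1)$-st and $k$-th $j$-vertex of $S_*$, which are exactly the increments $\mu^j_k$ driving the algorithm. Furthermore, $\psi(\lambda)\in\Phi_{d,n}$ will be verified by exhibiting an explicit point of $Z_{\psi(\lambda)}\subseteq U_{\psi(\lambda)}$: let $M_j$ have basis $(e_{q,i,w})$ indexed by the $j$-vertices $(q,i,w)$ of $\psi(\lambda)$, set $f_q(v_{q,i}):=e_{q,i,\varnothing}$, and extend each arrow by $M_\alpha(e_{q,i,w}):=e_{q,i,w\alpha}$ if $w\alpha\in\psi(\lambda)$ and by $0$ otherwise, producing a representation lying in $Z_{\psi(\lambda)}$ by construction.

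The principal obstacle is proving that the algorithm terminates with $a_j=d_j$ for every $j\in I$, i.e.~produces a forest realising the prescribed dimension vector $d$. This is precisely where membership in $S_{d,n}$ intervenes. Suppose for contradiction that the procedure halts with $e:=(a_j)_j$ satisfying $0\leq e<d$. Then for every $i\in I$ with $e_i<d_i$ all reachable $i$-vertices have been processed; the total number of reachable $i$-vertices of $\mathcal{F}_n\left(Q\right)$ equals $n_i+\sum_{\alpha:k\to i}e_k=n_i-\langle e,i\rangle+e_i$, since each is either one of the $n_i$ framing roots of type $i$ or an $\alpha$-successor for some arrow $\alpha$ ending in $i$ of a vertex already in $\psi(\lambda)$. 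Subtracting the $e_i$ placed in $\psi(\lambda)$, precisely $n_i-\langle e,i\rangle$ reside in $C(\psi(\lambda))$, and the algorithm's rule forces $c_i\leq\mu^i_{e_i+1}$ at termination, whence by telescoping
\[
  n_i-\langle e,i\rangle \;=\; \mu^i_1+\ldots+\mu^i_{e_i}+c_i \;\leq\; \mu^i_1+\ldots+\mu^i_{e_i+1} \;=\; \lambda^i_{d_i-e_i}.
\]
For $i$ with $e_i=d_i$ the convention on $\lambda^i_0$ gives the same inequality trivially. Hence $\lambda^i_{d_i-e_i}\geq n_i-\langle e,i\rangle$ for all $i\in I$, contradicting the defining condition of $S_{d,n}$ and establishing termination of $\psi$.
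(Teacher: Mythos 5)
Your proof is correct, and the underlying construction of the preimage forest is the same as the paper's (your $\mu^j_k$ is the paper's $\mu^j_{k-1}$, and your growth rule --- adjoin a $j$-vertex as soon as $\mu^j_{a_j+1}$ corona $j$-vertices have accumulated since the last adjunction --- is exactly the paper's inductive procedure). The logical route is genuinely different, though. The paper first notes that $\Phi_{d,n}$ and $S_{d,n}$ are finite of the same cardinality, by specialising Theorem \ref{Thm:Poincare} and Corollary \ref{Cor:Poincare} at $q=1$, so that only surjectivity of $\varphi$ has to be checked, and the verification that the algorithm's output is a preimage is delegated to the computation in the proof of Lemma \ref{Lem:MPwd}. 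You instead exhibit a two-sided inverse: you verify both compositions, you certify $\psi(\lambda)\in\Phi_{d,n}$ by the tree module supported on $\psi(\lambda)$ (which indeed lies even in $Z_{\psi(\lambda)}$, since every corona evaluation $M_wf_q(v_{q,i})$ vanishes for it), and --- most importantly --- you make the termination argument explicit; this is precisely the contrapositive of the count in the proof of Lemma \ref{Lem:MPwd}, and it is the only place where $\lambda\in S_{d,n}$ enters. What this buys is independence from the cohomological input: your argument nowhere uses the equality of the two Betti-number formulas, and in fact reproves it combinatorially. The price is the bookkeeping in $\varphi\circ\psi=\mathrm{id}$ and $\psi\circ\varphi=\mathrm{id}$, which you assert rather than carry out; the one observation that makes both routine, and which you should state, is that the processing order of your algorithm coincides with the restriction of the total order of Definition \ref{Def:Order} to $\psi(\lambda)\cup C(\psi(\lambda))$, because an adjunction only ever adds candidates larger than the vertex just processed. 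Two small repairs: the total order on $\mathcal{F}_n\left(Q\right)$ is in general not a well-order (for two loops $a<b$ at a vertex one has $b>ab>aab>\cdots$), so ``traverse the vertices in the total order'' should be replaced by ``repeatedly process the minimal not-yet-assigned vertex of the current corona'', a finite set at every stage; and for an empty tree $S_{q,i}=\emptyset$ the tree module must have $f_q\left(v_{q,i}\right)=0$ rather than a basis vector.
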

\begin{proof}
	It suffices to show surjectivity since $\Phi_{d,n}$ and $S_{d,n}$ are finite and of the same cardinality by Theorem \ref{Thm:Poincare} and Corollary \ref{Cor:Poincare}.\\
	Let $\left(\lambda^i_m\right)_{i,m}$ be a multipartition that satisfies \eqref{eq:MPcond}. Define
	\begin{align*}
		\mu^i_m&\coloneq\lambda^i_{d_i-m}-\lambda^i_{d_i-m+1}\qquad\left(m=1,\ldots,d_i-1,\ i\in I\right)\\
		\mu^i_0&\coloneq\lambda^i_{d_i}\qquad\left(i\in I\right).
	\end{align*}
	We construct a forest as follows: Denote the vertices in $I$ by $i_0<\ldots<i_{p-1}$ and start with the empty forest $S'_*\in\mathcal{F}_n\left(Q\right)$.
	\begin{enumerate}[1{. step:}]
		\item Append the path $\left(i_0,\lambda^{i_0}_{d_{i_0}},()\right)$ to $S'_*$.
		\item Append the $\mu^{i_1}_0$-th $i_1$-vertex in $C\left(S'_*\right)$ to $S'_*$.
		\item[$p$-th step:] Append the $\mu^{i_{p-1}}_0$-th $i_{p-1}$-vertex to $S'_*$.
		\item[$\left(p+1\right)$-th step:] Append the $\mu^{i_0}_{1}$-th $i_0$-vertex in $C\left(S'_*\right)$ to $S'_*$.
	\end{enumerate}
	Proceed inductively. The translation in the proof of Lemma \ref{Lem:MPwd} shows that the obtained forest is a preimage of $\lambda$ with respect to $\varphi$.
\end{proof}
\begin{examp}\label{ex:sec8}
	As an example let us take the quiver
	\begin{center}
%		\begin{pspicture}(4,1.5)
%			\psset{nodesep=3pt}
%			\rput(0.5,0.75){$Q:$}
%			\rput(1.5,0.75){\rnode{a}{$a$}}
%			\rput(3.5,0.75){\rnode{b}{$b$}}
%			\ncarc[arcangle=20]{->}{a}{b}\naput{$\alpha$}
%			\ncarc[arcangle=20]{->}{b}{a}\naput{$\beta$}
%		\end{pspicture}
		\includegraphics{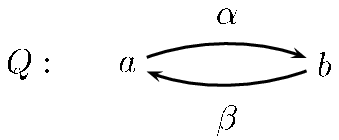}
	\end{center}
	together with dimension vectors $d=n=\left(2,2\right)$. Then $T_a$ is an infinite line, and so is $T_b$. Therefore $\mathcal F_n\left(Q\right)$ consists of two copies of each of them.
	The subforests of dimension type $d$ which parametrize the cells are listed in Table \ref{tab:mps} together with the corresponding multipartitions.
	\begin{longtable}{l|l|p{4.5cm}|l}
		\caption{List of trees/ multipartitions for example \ref{ex:sec8}}\\
		& $S_*$ & Conditions for $Z_{S_*}$ & Multipartition\\
		\hline
		\endfirsthead
		\caption[]{List of trees/ multipartitions for example \ref{ex:sec8} (cont.)}\\
		& $S_*$ & Conditions for $Z_{S_*}$ & Multipartition\\
		\hline\endhead
		1 & $\left(\left(\alpha,\alpha\beta,\alpha\beta\alpha\right),\emptyset,\emptyset,\emptyset\right)$ & & $\left(0,0\mid 0,0\right)$\\
		2 & $\left(\left(\alpha,\alpha\beta\right),\emptyset,(),\emptyset\right)$ & $\left( a,1,\alpha\beta\alpha\right)\in\left<\left( a,1,\alpha\right)\right>$ & $\left(0,0\mid 1,0\right)$\\
		3 & $\left(\left(\alpha,\alpha\beta\right),\emptyset,\emptyset,()\right)$ & $\left( b,1,()\right)\in\left<\left( a,1,\alpha\right)\right>$, $\left( a,1,\alpha\beta\alpha\right)\in\left<\left( a,1,\alpha\right)\right>$ & $\left(0,0\mid 2,0\right)$\\
		4 & $\left(\left(\alpha\right),\left(\alpha\right),\emptyset,\emptyset\right)$ & $\left( a,1,\alpha\beta\right)\in\left<\left( a,1,()\right)\right>$ & $\left(1,0\mid 0,0\right)$\\
		% B\alphaum 5
		5 & $\left(\left(\alpha\right),(),(),\emptyset\right)$ & $\left( a,2,\alpha\right)\in\left<\left( a,1,\alpha\right)\right>$, $\left( a,1,\alpha\beta\right)\in\left<\left( a,1,()\right)\right>$ & $\left(1,0\mid 1,0\right)$\\
		6 & $\left(\left(\alpha\right),(),\emptyset,()\right)$ & $\left( b,1,()\right)\in\left<\left( a,1,\alpha\right)\right>$, $\left( a,2,\alpha\right)\in\left<\left( a,1,\alpha\right)\right>$, $\left( a,1,\alpha\beta\right)\in\left<\left( a,1,()\right)\right>$ & $\left(1,0\mid 2,0\right)$\\
		7 & $\left(\left(\alpha\right),\emptyset,\left(\beta\right),\emptyset\right)$ & $\left( a,2,()\right)\in\left<\left( a,1,()\right)\right>$, $\left( a,1,\alpha\beta\right)\in\left<\left( a,1,()\right)\right>$ & $\left(2,0\mid 0,0\right)$\\
		8 & $\left(\left(\alpha\right),\emptyset,\emptyset,\left(\beta\right)\right)$ & $\left( a,2,()\right)\in\left<\left( a,1,()\right)\right>$, $\left( a,1,\alpha\beta\right)\in\left<\left( a,1,()\right)\right>$, $\left( b,1,()\right)\in\left<\left( a,1,\alpha\right)\right>$ & $\left(2,0\mid 1,0\right)$\\
		9 & $\left((),\left(\alpha\right),(),\emptyset\right)$ & $\left( a,1,\alpha\right)\in\left<\right>=0$ & $\left(0,0\mid 1,1\right)$\\
		10 & $\left((),\left(\alpha\right),\emptyset,()\right)$ & $\left( b,1,()\right)\in\left<\left( a,2,\alpha\right)\right>$, $\left( a,1,\alpha\right)=0$ & $\left(0,0\mid 2,1\right)$\\
		11 & $\left((),(),(),()\right)$ & $\left( a,1,\alpha\right)=0$, $\left( a,2,\alpha\right)\in\left<\right>=0$ & $\left(0,0\mid 2,2\right)$\\
		12 & $\left((),\emptyset,\left(\beta,\beta\alpha\right),\emptyset\right)$ & $\left( a,1,\alpha\right)=0$, $\left( a,2,()\right)\in\left<\left( a,1,()\right)\right>$ & $\left(1,0\mid 1,1\right)$\\
		13 & $\left((),\emptyset,\left(\beta\right),()\right)$ & $\left( a,1,\alpha\right)=0$, $\left( a,2,()\right)\in\left<\left( a,1,()\right)\right>$, $\left( b,1,\beta\alpha\right)\in\left<\left( b,1,()\right)\right>$ & $\left(1,0\mid 2,1\right)$\\
		14 & $\left((),\emptyset,(),\left(\beta\right)\right)$ & $\left( a,1,\alpha\right)=0$, $\left( a,2,()\right)\in\left<\left( a,1,()\right)\right>$, $\left( b,1,\beta\right)\in\left<\left( a,1,()\right)\right>$ & $\left(2,0\mid 1,1\right)$\\
		15 & $\left((),\emptyset,\emptyset,\left(\beta,\beta\alpha\right)\right)$ & $\left( a,1,\alpha\right)=0$, $\left( a,2,()\right)\in\left<\left( a,1,()\right)\right>$, $\left( b,1,()\right)\in\left<\right>=0$ & $\left(1,0\mid 2,2\right)$\\
		16 & $\left(\emptyset,\left(\alpha,\alpha\beta,\alpha\beta\alpha\right),\emptyset,\emptyset\right)$ & $\left( a,1,()\right)\in\left<\right>=0$ & $\left(1,1\mid 0,0\right)$\\
		17 & $\left(\emptyset,\left(\alpha,\alpha\beta\right),(),\emptyset\right)$ & $\left( a,1,()\right)=0$, $\left( a,2,\alpha\beta\alpha\right)\in\left<\left( a,2,\alpha\right)\right>$ & $\left(1,1\mid 1,0\right)$\\
		18 & $\left(\emptyset,\left(\alpha,\alpha\beta\right),(),\emptyset\right)$ & $\left( a,1,()\right)=0$, $\left( a,2,\alpha\beta\alpha\right)\in\left<\left( a,2,\alpha\right)\right>$, $\left( b,1,()\right)\in\left<\left( a,2,\alpha\right)\right>$ & $\left(1,1\mid 2,0\right)$\\
		19 & $\left(\emptyset,\left(\alpha\right),\left(\beta\right),\emptyset\right)$ & $\left( a,1,()\right)=0$, $\left( a,2,\alpha\beta\right)\in\left<\left( a,2,()\right)\right>$ & $\left(2,1\mid 0,0\right)$\\
		20 & $\left(\emptyset,\left(\alpha\right),\emptyset,\left(\beta\right)\right)$ & $\left( a,1,()\right)=0$, $\left( a,2,\alpha\beta\right)\in\left<\left( a,2,()\right)\right>$, $\left( b,1,()\right)\in\left<\left( a,2,\alpha\right)\right>$ & $\left(2,1\mid 1,0\right)$\\
		21 & $\left(\emptyset,(),\left(\beta,\beta\alpha\right),\emptyset\right)$ & $\left( a,1,()\right)=0$, $\left( a,2,\alpha\right)\in\left<\right>=0$ & $\left(1,1\mid 1,1\right)$\\
		22 & $\left(\emptyset,(),\left(\beta\right),()\right)$ & $\left( a,1,()\right)=0$, $\left( a,2\alpha\right)=0$, $\left( b,1,\beta\alpha\right)\in\left<\left( a,2,()\right)\right>$ & $\left(1,1\mid 2,1\right)$\\
		23 & $\left(\emptyset,(),(),\left(\beta\right)\right)$ & $\left( a,1,()\right)=0$, $\left( a,2,\alpha\right)=0$, $\left( b,1,\beta\right)\in\left<\left( a,2,()\right)\right>$ & $\left(2,1\mid 1,1\right)$\\
		24 & $\left(\emptyset,(),\emptyset\left(\beta,\beta\alpha\right)\right)$ & $\left( a,1,()\right)=0$, $\left( a,2,\alpha\right)=0$, $\left( b,1,()\right)\in\left<\right>=0$ & $\left(1,1\mid 2,2\right)$\\
		25 & $\left(\emptyset,\emptyset,\left(\beta,\beta\alpha,\beta\alpha\beta\right),\emptyset\right)$ & $\left( a,1,()\right)=0$, $\left( a,2,()\right)\in\left<\right>=0$ & $\left(2,2\mid 0,0\right)$\\
		26 & $\left(\emptyset,\emptyset,\left(\beta\right),\left(\beta\right)\right)$ & $\left( a,1,()\right)=0$, $\left( a,2,()\right)=0$, $\left( b,1,\beta\alpha\right)\in\left<\left( b,1,()\right)\right>$ & $\left(2,2\mid 1,0\right)$\\
		27 & $\left(\emptyset,\emptyset,\emptyset,\left(\beta,\beta\alpha,\beta\alpha\beta\right)\right)$ & $\left( a,1,()\right)=0$, $\left( a,2,()\right)=0$, $\left( b,1,()\right)\in\left<\right>=0$ & $\left(2,2\mid 1,1\right)$
		\label{tab:mps}
	\end{longtable}
\end{examp}

\end{document}